\newcommand{\C}{{\mathbb C}}
\newcommand{\R}{{\mathbb R}}
\newcommand{\Z}{{\mathbb Z}}
\newcommand{\g}{{\mathfrak g}}
\newcommand{\p}{{\mathfrak p}}
\newcommand{\fk}{{\mathfrak k}}
\newcommand{\lv}{{\mathfrak l}}
\newcommand{\Ind}{\operatorname{Ind}}
\newcommand{\Sym}{\operatorname{Sym}}
\newcommand{\Hom}{\operatorname{Hom}}
\newcommand{\tl}{\tau_{\Lambda}}
\newcommand{\sgn}{\operatorname{sgn}}
\newcommand{\cD}{{\cal D}}
\newcommand{\cH}{{\cal H}}
\newcommand{\diag}{\operatorname{diag}}
\numberwithin{equation}{section}
\theoremstyle{plain}
 \newtheorem{thm}{Theorem}[section]
 \newtheorem{prop}[thm]{Proposition}
 \newtheorem{lem}[thm]{Lemma}
 \newtheorem{cor}[thm]{Corollary}
\theoremstyle{definition}
 \newtheorem{rem}[thm]{Remark}
\newcommand{\bbA}{\mathbb{A}}
\newcommand{\bbC}{\mathbb{C}}
\newcommand{\bbR}{\mathbb{R}}
\newcommand{\bbQ}{\mathbb{Q}}
\newcommand{\bbZ}{\mathbb{Z}}
\newcommand{\calA}{\mathcal{A}}
\newcommand{\calD}{\mathcal{D}}
\newcommand{\calL}{\mathcal{L}}
\newcommand{\calN}{\mathcal{N}}
\newcommand{\calU}{\mathcal{U}}
\newcommand{\calZ}{\mathcal{Z}}
\newcommand{\fraka}{\mathfrak{a}}
\newcommand{\frakg}{\mathfrak{g}}
\newcommand{\frakH}{\mathfrak{H}}
\newcommand{\frakk}{\mathfrak{k}}
\newcommand{\frakl}{\mathfrak{l}}
\newcommand{\frakL}{\mathfrak{L}}
\newcommand{\frakn}{\mathfrak{n}}
\newcommand{\frakp}{\mathfrak{p}}
\newcommand{\fraks}{\mathfrak{s}}
\newcommand{\frakt}{\mathfrak{t}}
\newcommand{\GL}{\mathrm{GL}}
\newcommand{\Sp}{\mathrm{Sp}}
\newcommand{\SL}{\mathrm{SL}}
\newcommand{\SO}{\mathrm{SO}}
\newcommand{\fini}{\mathrm{fin}}
\newcommand{\Lie}{\mathrm{Lie}}
\newcommand{\bs}{\backslash}
\newcommand{\Wh}{\mathrm{Wh}}
\newcommand{\cusp}{\mathrm{cusp}}
\newcommand{\vep}{\varepsilon}
\newcommand{\isom}{\cong}
\title{Cuspidal components of Siegel modular forms for large discrete series representations of $\Sp_4(\mathbb{R})$}
\author[1]{Shuji Horinaga \thanks{syuuji.horinaga@ntt.com, shorinaga@gmail.com}}
\author[2]{Hiro-aki Narita \thanks{hnarita@waseda.jp}}
\date{\today}
\affil[1]{NTT Institute for Fundamental Mathematics, NTT Communication Science Laboratories, NTT Corporation, Japan}
\affil[2]{Department of Mathematics
Faculty of Science and Engineering
Waseda University
3-4-1 Okubo, Shinjuku-ku, Tokyo 169-8555
JAPAN}
\begin{document}

\maketitle

\begin{abstract}
    In this paper, we consider automorphic forms on $\Sp_4(\bbA_\bbQ)$ which generate large discrete series representations of $\Sp_4(\bbR)$ as $(\mathfrak{sp}_4(\bbR),K_\infty)$-modules.
    We determine the cuspidal components and the structure of the space of such automorphic forms.
\end{abstract}


\section{Introduction}
    
    A Siegel modular form is a generalization of modular forms of one variable and is a major topic of interest in number theory.
    Its role in number theory, including L-functions, quadratic forms, and many problems, has been very significant.
    Harish-Chandra, Langlands, and others have found that it is useful to understand it from the viewpoint of representation theory.
    In this context, Siegel modular forms correspond to the highest weight $K$-types of holomorphic discrete series representations on symplectic groups.
    However, the concepts of the modular forms corresponding to discrete series representations other than holomorphic ones is not well understood.
    In this paper, we develop a structural theory of the modular forms for large discrete series representations of $\Sp_4(\bbR)$, the symplectic group of degree two.
    
    We first recall the theory of Siegel modular forms of weight $k$ with respect to the full modular group $\Gamma_n=\Sp_{2n}(\bbZ)$.
    Let $M_k(\Gamma_n)$ be the space of Siegel modular forms of weight $k$ of degree $n$ with respect to $\Gamma_n$ and $S_k(\Gamma_n)$ be the subspace of cusp forms in $M_k(\Gamma_n)$.
    Suppose that $k$ is even.
    Take $0 \leq \ell < n$ and $f \in S_k(\Gamma_\ell)$.
    Put
    \[
    P = \left\{
    \left(
    \begin{array}{cc|cc}
    A&*&*&*\\
    0_{n - \ell,\ell}&a&*&b\\
    \hline
    0_{n - \ell}&0_{\ell,n-\ell}&{^t A}^{-1}&0_{\ell,n-\ell}\\
    0_{n-\ell,\ell}&c&*&d
    \end{array}
    \right)
    \in \Sp_{2n}(\bbZ) \middle|\, A \in \GL_{n-\ell}(\bbZ), \begin{pmatrix}a & b \\ c & d \end{pmatrix} \in \Sp_{2\ell}(\bbZ)\right\}.
    \]
    We now define the Klingen Eisenstein series $E_k^{(n)}(z,f)$ of weight $k$ attached to $f$ by
    \[
    E_k^{(n)}(z,f)
    =
    \sum_{\gamma \in P \cap \Gamma_n \bs \Gamma_n} j(\gamma,z)^{-k}f((\gamma(z))^{*}),
    \]
    where $z^* = z_3$ for $z = \left(\begin{smallmatrix}z_1&z_2\\ {^t z_2} & z_3\end{smallmatrix}\right)$, and $j$ is the factor of automorphy.
    If $k > n+\ell+1$, the sum converges absolutely and uniformly on any compact sets in the Siegel upper half space $\frakH_n$.
    Let $E_{k}^{(\ell)}(\Gamma_n)$ be the space spanned by Klingen Eisenstein series $E_k^{(n)}(z,f)$ of weight $k$ attached to $f$ for all $f \in S_k(\Gamma_\ell)$.
    The following structure theorem for Siegel modular forms is well-known (cf.~\cite[Chap.~II, Proposition 6]{90_Klingen}), 
    where we set $E_k^{(n)}(\Gamma_n)=S_k(\Gamma_n)$.
    
    \begin{thm}
    Let $k$ be a weight.
    If $k$ is even with $k \geq 2n+2$, we have
    \[
    M_k(\Gamma_n) = \bigoplus_{\ell=0}^n E_k^{(\ell)}(\Gamma_n).
    \]
    \end{thm}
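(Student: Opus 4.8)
The principal tool will be the Siegel (lowering) operator $\Phi\colon M_k(\Gamma_n)\to M_k(\Gamma_{n-1})$, defined for $F\in M_k(\Gamma_n)$ and $w\in\frakH_{n-1}$ by
\[
(\Phi F)(w)=\lim_{\lambda\to\infty}F\!\begin{pmatrix} w & 0\\ 0 & i\lambda\end{pmatrix}.
\]
I would first record its two classical properties: that $\Phi$ indeed lands in $M_k(\Gamma_{n-1})$, and that its kernel is exactly the space of cusp forms, $\ker\Phi=S_k(\Gamma_n)$. With these in hand the whole theorem reduces to an induction on $n$, the base case $n=0$ being the identification $M_k(\Gamma_0)=S_k(\Gamma_0)=\bbC$.

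The heart of the argument is the compatibility of $\Phi$ with the Klingen construction. Adopting the convention $E_k^{(\ell)}(\cdot,f)=f$ for $f\in S_k(\Gamma_\ell)$, I claim that for every $\ell$ with $0\le\ell\le n-1$ and every $f\in S_k(\Gamma_\ell)$,
\[
\Phi\bigl(E_k^{(n)}(\cdot,f)\bigr)=E_k^{(n-1)}(\cdot,f).
\]
To prove this I would apply $\Phi$ term by term to the defining series, which is legitimate because the hypothesis $k\ge 2n+2>n+\ell+1$ guarantees absolute and locally uniform convergence. The point is then to analyze the limit $\lambda\to\infty$ coset by coset over $P\cap\Gamma_n\bs\Gamma_n$: the cosets whose representatives keep the last row and column finite survive and reassemble into the series for $\Gamma_{n-1}$ (after the factor-of-automorphy bookkeeping), while the remaining summands are forced to vanish in the limit. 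Iterating yields $\Phi^{\,n-\ell}E_k^{(n)}(\cdot,f)=f$ and hence $\Phi^{\,n-\ell+1}E_k^{(n)}(\cdot,f)=0$, since a cusp form lies in $\ker\Phi$.

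Granting the compatibility, the inductive step is formal. Given $F\in M_k(\Gamma_n)$, the inductive hypothesis writes $\Phi F=\sum_{\ell=0}^{n-1}E_k^{(n-1)}(\cdot,f_\ell)$ with $f_\ell\in S_k(\Gamma_\ell)$; setting $\tilde F=\sum_{\ell=0}^{n-1}E_k^{(n)}(\cdot,f_\ell)$ and using the compatibility gives $\Phi(F-\tilde F)=0$, so $F-\tilde F\in\ker\Phi=S_k(\Gamma_n)=E_k^{(n)}(\Gamma_n)$ and therefore $F=\tilde F+(F-\tilde F)\in\bigoplus_{\ell=0}^{n}E_k^{(\ell)}(\Gamma_n)$. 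Directness follows from the same compatibility: if $\sum_{\ell=0}^{n}E_k^{(n)}(\cdot,f_\ell)=0$, then applying $\Phi^{n}$ annihilates every term with $\ell\ge1$ and returns $f_0$, forcing $f_0=0$; applying $\Phi^{\,n-1}$ to what remains then forces $f_1=0$, and so on down to $f_n=0$.

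The main obstacle is the compatibility computation, and within it the interchange of the Siegel limit with the infinite Klingen sum together with the explicit coset analysis over $P\cap\Gamma_n\bs\Gamma_n$. This is precisely where the convergence range $k>n+\ell+1$ is needed, which in the worst case $\ell=n-1$ with $k$ even gives $k\ge2n+2$, and it is the structural reason the decomposition can fail for small weights. Everything else is bookkeeping once $\ker\Phi=S_k(\Gamma_n)$ and the surjectivity implicit in the compatibility are in place.
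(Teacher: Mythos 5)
Your proposal is correct and is essentially the same argument as the one behind the paper's statement: the paper itself offers no proof, citing Klingen (Chap.~II, Proposition 6), and Klingen's proof is exactly your route --- the Siegel operator $\Phi$ with $\ker\Phi=S_k(\Gamma_n)$, the compatibility $\Phi\bigl(E_k^{(n)}(\cdot,f)\bigr)=E_k^{(n-1)}(\cdot,f)$ proved by a coset-by-coset limit analysis, and induction on $n$ for both surjectivity and directness. The one point to state more carefully is the interchange of the limit $\lambda\to\infty$ with the Klingen sum: since the points $\left(\begin{smallmatrix} w & 0\\ 0 & i\lambda\end{smallmatrix}\right)$ leave every compact set, you need uniform convergence on domains of the form $\{z\in\frakH_n \mid \IM z\ge c>0\}$ (which the range $k>n+\ell+1$ does provide, as Klingen shows), not merely locally uniform convergence on $\frakH_n$.
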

    
    This theorem can be generalized to nearly holomorphic automorphic forms on $\Sp_{2n}(\bbA_F)$ for a totally real field $F$.
    For details, see \cite{Horinaga_2, Horinaga_3}.
    
    We now consider the automorphic forms $\varphi$ on $\Sp_4(\bbA_\bbQ)$ such that $\varphi$ generates a large discrete series representation of $\Sp_{4}(\bbR)$ at the archimedean place.
    The discrete series representations of $\Sp_{4}(\bbR)$ is parametrized by $\{(\lambda_1,\lambda_2) \in \bbZ^{2}\mid \lambda_1 \geq \lambda_2\} \setminus (\{\lambda_1 \cdot \lambda_2=0\}\cup \{\lambda_1 = \pm \lambda_2\})$.
    For simplicity, in this section, we only treat the case where $\lambda \in \{(\lambda_1,\lambda_2)\in \bbZ_{>0}\times \bbZ_{<0} \mid \lambda_1 > -\lambda_2\}$.
    The corresponding discrete series representation is defined to be of type II and large.
    We denote the representation corresponding to $\lambda$ by $\calD_\lambda$.
    Let $\calL_\lambda$ be the space of automorphic forms $\varphi$ on $\Sp_{4}(\bbA_\bbQ)$ such that $\varphi$ generates $\calD_\lambda$.
    By the general theory of automorphic forms (cf.~\cite{MW}), the space $\calL_\lambda$ decomposes as the direct sum
    \[
    \calL_\lambda = \bigoplus_{(M,\pi)} \calL_{\lambda,(M,\pi)},
    \]
    where $(M,\pi)$ runs over all equivalence classes of cuspidal data and $\calL_{\lambda, (M,\pi)}$ is the subspace of $\calL_\lambda$ spanned by automorphic forms with the cuspidal support $(M,\pi)$.
    Here, a cuspidal datum $(M,\pi)$ consists of a Levi subgroup $M$ and an irreducible cuspidal automorphic representation $\pi$ of $M(\bbA_\bbQ)$.
    For a parabolic subgroup $P$ of $\Sp_4(\bbA_\bbQ)$ and an automorphic form on $\Sp_4(\bbA_\bbQ)$, let $\varphi_P$ be the constant term of $\varphi$ along $P$.
    Then, by theorems in \S\ref{section_Whitt}, the constant term $\varphi_P$ lies in the induced representation.
    Conversely, for an element $f$ of induced representations, we can define the Eisenstein series as
    \[
    E(g,f) = \sum_{\gamma \in P(\bbQ) \bs \Sp_{4}(\bbQ)} f(\gamma g).
    \]
    We now state the main theorem for the case where $M=\GL_2$ and $\lambda \in \Xi_{II}$.
    By $\calD_k$, we mean the discrete series representation of $\GL_2(\bbR)$ of weight $k \in \bbZ_{>1}$ with trivial central character.
    For the other cases, see \S \ref{section_main}.
    
    \begin{thm}
    Let $P=M_PN_P$ be the Siegel parabolic subgroup of $\Sp_{4}$~(cf.~Section \ref{Symp-gp}) and $\lambda = (\lambda_1,\lambda_2) \in \Xi_{II}$.
    Take an irreducible cuspidal automorphic representation $\pi=\otimes_v\pi_v$ of $\GL_2(\bbA_\bbQ)$ such that $\pi$ is invariant under the split component of $M_P(\bbA_\bbQ)=\GL_2(\bbA_\bbQ)$.
    \begin{enumerate}
        \item If $\calL_{\lambda,(M_P,\pi)}$ is non-zero, the archimedean component $\pi_\infty$ is a discrete series representation of $\GL_2(\bbA_\bbQ)$ of weight $\lambda_1+\lambda_2+1$ or $\lambda_1-\lambda_2+1$.
        \item If $\pi_\infty = \calD_{\lambda_1+\lambda_2+1}$ and $\lambda_1-\lambda_2>3$, the constant term along $P$ induces the isomorphism
	    \[
	    \calL_{\lambda,(M,\pi)} \isom \left(\bigotimes_{v < \infty}\Ind_{P_S(\bbQ_b)}^{G(\bbQ_v)}\left(|\cdot|^{(\lambda_1-\lambda_2)/2}\otimes\pi_v\right)\right) \otimes \calD_\lambda.
	    \]
	    \item If $\pi_\infty = \calD_{\lambda_1-\lambda_2+1}$ and $\lambda_1+\lambda_2>3$, the constant term along $P$ induces the isomorphism
	    \[
	    \calL_{\lambda,(M,\pi)} \isom \left(\bigotimes_{v < \infty}\Ind_{P_S(\bbQ_v)}^{G(\bbQ_v)}\left(|\cdot|^{(\lambda_1+\lambda_2)/2}\otimes\pi_v\right)\right) \otimes \calD_\lambda.
	    \]
	\end{enumerate}
    \end{thm}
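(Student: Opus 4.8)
The plan is to study $\calL_{\lambda,(M_P,\pi)}$ through the constant term map along $P$ and to invert it by Eisenstein series. Write $s_0=(\lambda_1-\lambda_2)/2$ in case (2) and $s_0=(\lambda_1+\lambda_2)/2$ in case (3), and let $\rho_P$ be the half-sum of the roots in $N_P$. Since $N_P$ is abelian of dimension $3$, the modulus character is $\delta_P=|\det|^{3}$ on $M_P=\GL_2$, so $\rho_P=3/2$ and the Siegel Eisenstein series attached to a cusp form on $M_P$ converges absolutely for $\real(s)>3/2$. The two hypotheses $\lambda_1-\lambda_2>3$ and $\lambda_1+\lambda_2>3$ are precisely $s_0>\rho_P$, which is what keeps the construction in the convergent range.

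For part (1), I would use that by the results of \S\ref{section_Whitt} the constant term $\varphi_P$ of any $\varphi\in\calL_{\lambda,(M_P,\pi)}$ lies in $\Ind_{P(\bbA)}^{G(\bbA)}(\pi\otimes|\det|^{s})$ for a suitable $s$, and that at the archimedean place the constant term map is $(\frakg,K_\infty)$-equivariant. As $\calD_\lambda$ is irreducible, a nonzero $\varphi$ forces an embedding $\calD_\lambda\hookrightarrow\Ind_{P(\bbR)}^{G(\bbR)}(\pi_\infty\otimes|\det|^{s})$. By Frobenius reciprocity this is governed by the $\frakn_P$-homology $H_0(\frakn_P,\calD_\lambda)$ viewed as a $\GL_2(\bbR)$-module, which I would compute via the Hecht--Schmid description of the $\frakn$-homology of discrete series. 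Matching the infinitesimal character of $\calD_\lambda$, namely $(\lambda_1,\lambda_2)$, against that of $\Ind(\calD_k\otimes|\det|^{s})$, namely $\{s\pm(k-1)/2\}$ up to the Weyl group, already isolates the two solutions $(k,s)=(\lambda_1+\lambda_2+1,(\lambda_1-\lambda_2)/2)$ and $(k,s)=(\lambda_1-\lambda_2+1,(\lambda_1+\lambda_2)/2)$, so that $\pi_\infty$ is forced to be $\calD_{\lambda_1+\lambda_2+1}$ or $\calD_{\lambda_1-\lambda_2+1}$. This $\frakn_P$-homology computation is the heart of the matter and the step I expect to be the main obstacle, since it both produces the two admissible weights and forces the particular twist $|\det|^{s_0}$.

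For the isomorphisms in parts (2) and (3), I would first establish injectivity of the constant term map: because the cuspidal support sits on the \emph{proper} Levi $M_P$, a form in $\calL_{\lambda,(M_P,\pi)}$ has no cuspidal contribution and the $(M_P,\pi)$-part of the spectrum is captured by its constant term along $P$, as in the spectral decomposition of \cite{MW}. For surjectivity, given $f$ in the stated target I would choose the archimedean vector $f_\infty$ inside the embedded copy of $\calD_\lambda$ produced in part (1) and form the Eisenstein series $E(g,f)=\sum_{\gamma\in P(\bbQ)\bs\Sp_4(\bbQ)}f(\gamma g)$ at the point $s_0$. Since $s_0>\rho_P=3/2$, the series converges absolutely, is holomorphic, defines an automorphic form with cuspidal support $(M_P,\pi)$, and—because $f_\infty$ lies in the $\calD_\lambda$-submodule—generates $\calD_\lambda$ at infinity.

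Finally I would recover $f$ and assemble the local picture. The constant term of $E(g,f)$ along $P$ equals $f+M(w_0,s_0)f$, where $w_0$ is the nontrivial Weyl element relative to $P$ and the intertwining term $M(w_0,s_0)f$ lies in the induced representation with the \emph{opposite} exponent $-s_0$ (here one uses that $\pi$ has trivial central character, so $\pi^{w_0}\isom\pi$); since $s_0>0$, projecting the constant term onto the dominant exponent $+s_0$ returns $f$, which gives surjectivity. At each finite place the local computation of the Jacquet module together with local multiplicity one identifies the image with the full $\Ind_{P_S(\bbQ_v)}^{G(\bbQ_v)}(|\cdot|^{s_0}\otimes\pi_v)$, while at infinity part (1) identifies the archimedean factor with the multiplicity-one copy of $\calD_\lambda$. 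Combining injectivity, surjectivity, and this factorization yields the asserted isomorphism. The analytic input is comparatively mild, because $\lambda_1\mp\lambda_2>3$ keeps $s_0$ strictly away from the poles of $M(w_0,s)$ (controlled by $L(s,\pi,\Sym^2)$); the $\frakn_P$-homology computation of part (1) remains the essential difficulty.
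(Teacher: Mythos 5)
Your global skeleton is the same as the paper's (the statement is Theorem \ref{II_main_thm_P_S}, proved there by the argument of Theorem \ref{II_main_thm_P_J}): the constant term along $P_S$ is injective on the $(M_P,\pi)$-part of the spectrum, it is inverted by an Eisenstein series that converges absolutely precisely because $s_0=(\lambda_1\mp\lambda_2)/2>3/2=\rho_P$ (your reading of the hypotheses $\lambda_1\mp\lambda_2>3$ is correct), and the extra term $M_{w_0,s}f_s$ in the constant term formula (Theorem \ref{const_term_Eis_ser}) is handled through the structure of $\Ind_{P_S(\bbR)}^{G(\bbR)}\left(\pi_\infty\otimes|\cdot|^{-s_0}\right)$. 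Where you genuinely diverge is the archimedean mechanism behind part (1). The paper never computes $\frakn_P$-homology: it solves explicitly the differential equations for degenerate Whittaker functions (Theorem \ref{Deg-Whitt-Siegel}), deduces that the $({\fraks\frakl}_2(\bbR),\mathrm{O}(2))$-module they generate on the Levi lies in $\calD_{\lambda_1+\lambda_2+1}\oplus\calD_{\lambda_1-\lambda_2+1}$ (Lemma \ref{Wh_siegel}), globalizes via the Fourier-expansion argument of Lemma \ref{Wh_hol_sl} to conclude that $\varphi_{P_S}|_{L_S(\bbA)}$ is a holomorphic automorphic form, and quotes Mui\'c for the embedding statements (Lemma \ref{emb_ps}). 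Your Casselman--Frobenius/Hecht--Schmid route is a legitimate, more structural substitute. Note, however, that the paper's explicit solutions are also what justify semisimplicity of the $A_S^\infty$-action on the constant term with only the stated exponents (no $\bbC[\fraka_P]$, i.e.\ logarithmic, terms); you assume this when you write $\varphi_P\in\Ind_{P(\bbA)}^{G(\bbA)}(\pi\otimes|\det|^{s})$, and since you cite \S\ref{section_Whitt} for it, your part (1) is not actually independent of the paper's Whittaker computations.

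There are two concrete gaps. First, in part (1) your infinitesimal-character matching is carried out only against $\Ind(\calD_k\otimes|\det|^{s})$, i.e.\ it presupposes that $\pi_\infty$ is a discrete series; but the assertion to be proved includes ruling out that $\pi_\infty$ is a unitary principal series or complementary series (a Maass-type component). As written, the conclusion ``so that $\pi_\infty$ is forced to be $\calD_{\lambda_1+\lambda_2+1}$ or $\calD_{\lambda_1-\lambda_2+1}$'' does not follow. This is repairable inside your framework: either carry the Hecht--Schmid computation to the end (its constituents list \emph{all} pairs $(\sigma,s)$ with $\calD_\lambda\hookrightarrow\Ind_{P_S(\bbR)}^{G(\bbR)}(\sigma\otimes|\det|^{s})$), or use that $\pi_\infty$ is unitary and generic, so that matching its infinitesimal character, shifted by a real exponent, against the regular integral $(\lambda_1,\lambda_2)$ would force parameter differences that are simultaneously nonzero integers and purely imaginary or of modulus $<1$, which is impossible. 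Second, in parts (2)--(3) the theorem asserts that the constant term itself carries $\calL_{\lambda,(M,\pi)}$ isomorphically onto $\left(\bigotimes_{v<\infty}\Ind_{P_S(\bbQ_v)}^{G(\bbQ_v)}\left(|\cdot|^{s_0}\otimes\pi_v\right)\right)\otimes\calD_\lambda$. Merely projecting $E(\cdot,s_0,f)_{P_S}=f+M_{w_0,s_0}f$ onto the exponent $+s_0$ is not enough: if $M_{w_0,s_0}f\neq0$, the image of the constant-term map is a twisted diagonal, not the stated space. The paper closes this by proving $M_{w_0,s_0}f=0$: the archimedean component of $M_{w_0,s_0}f$ is a quotient of $\calD_\lambda$ inside $\Ind_{P_S(\bbR)}^{G(\bbR)}\left(\pi_\infty\otimes|\cdot|^{-s_0}\right)$, whose unique irreducible subrepresentation is the non-tempered Langlands subrepresentation, so this quotient is zero, and absolute convergence of the global intertwining integral then forces the global term to vanish. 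You already have the ingredient needed (your part (1) shows embeddings occur only at exponent $+s_0$), so this is a short addition, but it is required for the isomorphism as stated.
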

    
    In \cite{Horinaga_1, Horinaga_2, Horinaga_3}, similar structure theorems for nearly holomorphic automorphic forms on $\Sp_{2n}$ are proved.
    We clarify the differences between the case for large discrete series representations and for nearly holomorphic modular forms in Remark \ref{diff_NH}.
    
    As far as the authors know, there has been no result of an explicit description of cuspidal components for non-holomorphic automorphic forms.
    An additional novelty of our research is that we examine the Whittaker functions attached to degenerate characters of the maximal unipotent subgroup of $\Sp_4(\R)$~(cf.~Section \ref{Deg-Whitt}).
    Usually the Whittaker functions for a quasi-split reductive group $G$ over a local field are defined for non-degenerate characters of a maximal unipotent subgroup of $G$.
    The notion of Whittaker functions is reformulated as the Whittaker model for an admissible representation of a quasi-split  group over a local field.
    For the case of a quasi-split real reductive group the multiplicity free property is known for irreducible admissible representations~(cf.~Wallach \cite[Theorem 8.8]{1983_Wallach}). 
    However, according to relevant prior results~(cf.~\cite{2001_Hirano, 2009_Muic}) as well as our results, the multiplicity free property seems to collapse frequently for Whittaker functions of degenerate cases.
    
\subsection*{Acknowledgement}
    This work was supported by the Research Institute for Mathematical Sciences, an International Joint Usage/Research Center located in Kyoto University.
    The first author is supported by AIP Challenge of JST CREST JPMJCR14D6, JST CREST JPMJCR14D6 and CREST JPMJCR2113, Japan. 
    The second named author was partially supported by Grand-in-Aid for Scientific Research (C) 19K03431, Japan Society for the Promotion of Science.  
    We are very grateful to Professor Taku Ishii for his generosity to use a part of the results in the paper about the Whittaker functions on $\Sp_4(\mathbb{R})$ attached to degenerate characters, being prepared jointly with the second named author. 
    
\section{Notation}\label{BN}
   
    For what follows, we introduce a basic notation. In addition to fixing the convention of notation for groups over local fields such as real groups we need notations for the global theory, e.g. ad\'ele groups etc. 
    
\subsection{Basic notation}\label{Basic-Notation}
    
    For the field of rational numbers $\bbQ$, we denote by $\bbA$ the ring of ad\'eles of $\bbQ$.
    Let $\bbA_{\fini}$ be the finite part of $\bbA$.
    For simplicity, we say that one dimensional representation is a character.
    We do not assume that a character is unitary.
    By a Hecke character we mean a character of $\bbQ^\times \bbR_+^\times \bs \bbA^\times$.
    We define a non-trivial additive character $\psi=\otimes_v\psi_v$ of $\bbQ\bs\bbA$ by $\psi_\infty (x)= \exp(2\pi \sqrt{-1}\,x)$ and $\psi_p(x)=\exp(-2\pi\sqrt{-1}y)$ for $x \in \bbQ_v$.
    Here, $p$ is a rational prime and $y$ is an element of $\cup_{n=1}^\infty p^{-n}\bbZ$ such that $x-y \in \bbZ_p$.
    For an algebraic group $G$ over $\bbQ$ and a place $v$ of $\bbQ$, set $G_v$ to be the $\bbQ_v$-valued points of $G$.
    For the sake of simplicity, when the place $v$ is obvious in context, we write $G=G_v$.
    Specifically, in Sections \ref{section_rep_Lie_group}, \ref{section_emb_ind_rep} and \ref{section_Whitt}, we only treat groups at the archimedean place $\infty$.
    For an automorphic representation of $G(\bbA)$, we mean a subspace of automorphic forms on $G(\bbA)$ stable under the right translation by  $G(\bbA_\fini)$ and admitting a $(\Lie(G(\bbR)),K)$-module structure at the archimedean place. 
    Here, $\Lie(G(\bbR))$ denotes the Lie algebra of $G(\bbR)$ and $K$ is a maximal compact subgroup of $G(\bbR)$.
    Let $G$ be a reductive algebraic group over a local field $F$.
    For a parabolic subgroup $P$ of $G$ over $F$, we denote by $\delta_P$ the modulus character of $P$.
    Take a representation $\pi$ of a Levi subgroup $M(F)$ of $P(F)=M(F)N(F)$.
    Let $\Ind_{P(F)}^{G(F)}(\pi)$ be the normalized induced representations, i.e.,
    \[
    \Ind_{P(F)}^{G(F)}(\pi) = \{f \colon G(F) \longrightarrow \pi \mid \text{$f(mng)=\delta_P(m)\pi(m)f(g)$ for any $m \in M(F)$ and $n \in N(F)$}\}.
    \]
    
\subsection{Symplectic groups}\label{Symp-gp}
    
   By $R$ we denote a ring. Let $G$ be the symplectic group defined by
    \[
    G(R)=\{g\in \GL_4(R)\mid {}^tgJ_4g=J_4\},
    \]
    where $J_4=
    \left(
    \begin{smallmatrix}
    0_2 & \mathrm{1}_2\\
    -\mathrm{1}_2 & 0_2
    \end{smallmatrix}
    \right)$. 
    This has two maximal parabolic subgroups called the Jacobi~(or Klingen) parabolic subgroup $P_J$ and the Siegel parabolic subgroup $P_S$ up to conjugation. 
    
    We first provide a review of the group $P_J$ and its subgroups.
    The group $P_J$ has the Levi decomposition $N_J\rtimes L_J$. Here $N_J$ is the nilpotent algebraic group defined by
    \[
    N_J(R)=
    \left\{\left. n(u_0,u_1,u_2):=
    \left(
    \begin{array}{cc|cc}
    1 & 0 & u_1 & u_2\\
    0 & 1 & u_2 & 0\\
    \hline
    0 & 0 & 1 & 0\\
    0 & 0 & 0 & 1
    \end{array}
    \right)
    \left(
    \begin{array}{cc|cc}
    1 & u_0 & 0 & 0\\
    0 & 1 & 0 & 0\\
    \hline
    0 & 0 & 1 & 0\\
    0 & 0 & -u_0 & 1
    \end{array}
    \right)~\right|~u_0,~u_1,~u_2\in R\right\}
    \]
    and the Levi part $L_J$ is the subgroup of $G$ given by
    \begin{align}\label{def_L_J}
    L_J(R)=
    \left\{\left.
    \left(
    \begin{array}{cc|cc}
    \alpha & & & \\
    & a & & b\\
    \hline
    & & \alpha^{-1} & \\
    & c & & d
    \end{array}
    \right)~\right|~\alpha\in R^{\times},~
    \begin{pmatrix}
    a & b\\
    c & d
    \end{pmatrix}\in \SL_2(R)\right\}.
    \end{align}
    The unipotent radical $N_J$ of $P_J$ is nothing but the Heisenberg group with the center  
    \[
    Z_J:=\{n(0,u_1,0)\mid u_1\in R\}.
    \]
    
    We introduce the Jacobi group $G_J$ defined by the semi-direct product 
    \[
    G_J(R) = N_J(R)\rtimes \SL_2(R),
    \]
    where $\SL_2(R)$ is viewed as a subgroup of $G_J(R)$~(or $L_J(R)$) by putting $\alpha=1$ in (\ref{def_L_J}).
    The groups $G_J$ and $N_J$ have  $Z_J$ as the center in common. 

    We next review another maximal parabolic subgroup $P_S$, which has the Levi decomposition $N_S\rtimes L_S$.
    Here $N_S$ is the nilpotent algebraic group defined by
    \[
    N_S(R)=
    \left\{ n_S(u_0,u_1,u_2):=
    \left(
    \begin{array}{cc|cc}
    1 & 0 & u_0 & u_1\\
    0 & 1 & u_1 & u_2\\
    \hline
    0 & 0 & 1 & 0\\
    0 & 0 & 0 & 1
    \end{array}
    \right)
    \,\middle|\, u_0, u_1, u_2\in R\right\}
    \]
    and the Levi part $L_J$ is the subgroup of $G$ given by
    \[
    L_J(R)=
    \left\{
    \begin{pmatrix}
    A&\\
    &{^t}A^{-1}
    \end{pmatrix}
    \,\middle|\,
    A \in \GL_2(R)
    \right\}.
    \]
    The unipotent radical is abelian.
    
    We fix a minimal parabolic subgroup $P_0$ as follows.
    The group $P_0$ has the unipotent radical $N_0$ and a Levi subgroup $L_0$, where $L_0$ is the group of diagonal matrices in $G$ and $N_0$ is defined by 
    \[
    N_0(R)=
    \left\{\left.n(u_0,u_1,u_2,u_3):=
    \left(
    \begin{array}{cc|cc}
    1 & 0 & u_1 & u_2\\
    0 & 1 & u_2 & u_3\\
    \hline
    0 & 0 & 1 & 0 \\
    0 & 0 & 0 & 1
    \end{array}
    \right)
    \left(
    \begin{array}{cc|cc}
    1 & u_0 & 0 & 0\\
    0 & 1 & 0 & 0\\
    \hline
    0 & 0 & 1 & 0\\
    0 & 0 & -u_0 & 1
    \end{array}
    \right)~\right|~\text{$n_i\in R$ for $0\le i\le 3$}\right\}.
    \]
    A parabolic subgroup $P$ is called standard if $P$ contains $P_0$.

\subsection{Real symplectic groups}\label{Real-spgp}
    
    In this subsection, we focus on the real case, i.e., $R=\bbR$, and put $H=H(\bbR)$ for algebraic subgroups $H$ of $G$, in particular $G:=G(\bbR)$.
    We first review the Langlands decomposition of parabolic subgroups of $G$.
    The parabolic subgroup $P_J$ has the Langlands decomposition $P_J=N_JA_J^\infty M_J$ with 
    \[
    A_J^\infty:=\left\{\left.
    \left(
    \begin{array}{cc|cc}
    a & 0 & 0 & 0\\
    0 & 1 & 0 & 0\\
    \hline
    0 & 0 & a^{-1} & 0\\
    0 & 0 & 0 & 1
    \end{array}
    \right)
    ~\right|~a\in\R^{\times}_+\right\},~
    M_J:=\left\{\left.
    \left(
    \begin{array}{cc|cc}
    \vep & 0 & 0 & 0\\
    0 & a & 0 & b\\
    \hline
    0 & 0 & \vep & 0\\
    0 & c & 0 & d
    \end{array}
    \right)
    ~\right|~
    \begin{array}{c}
    \begin{pmatrix}
    a & b\\
    c & d
    \end{pmatrix}\in \SL_2(\R)\\
    \vep\in\{\pm 1\}
    \end{array}\right\}.
    \]
    
    The Langlands decomposition of $P_S$ is given by $P_S=N_SA_S^\infty M_S$ with 
    \[
    A_S^\infty:=\{\diag(a,a,a^{-1},a^{-1})\mid a\in\R_{>0}\},~
    M_S:=\left\{\left.
    \begin{pmatrix}
    A & 0_2\\
    0_2 & {}^tA^{-1}
    \end{pmatrix}~\right|~A\in \GL(2,\R),~\det(A)=\pm1\right\}.
    \]
    We will use the notation $\SL_2^{\pm}(\R):=\{A\in \GL_2(\R)\mid \det(A)=\pm1\}$. We obviously have $M_S\simeq \SL_2^{\pm}(\R)$. 
    
    We also review the Langlands decomposition $P_0=N_0A_0^\infty M_0$ of $P_0$, where
    \[
    A_0^\infty:=\{a_0=\diag(a_1,a_2,a_1^{-1},a_2^{-1})\mid a_1,~a_2\in\R_{>0}\},~M_0:=\{\diag(\vep_1,\vep_2,\vep_1,\vep_2)\mid \vep_1,~\vep_2\in\{\pm 1\}\}.
    \]
    We now note that $N_S=\{n(u_0,u_1,u_2,u_3)\in N_0\mid u_0=0\}$. 
    The group $N_0$ admits the semi-direct product decomposition $N_0=N_S\rtimes N_L$ with the subgroup $N_L$ defined by $N_L:=\{n(u_0,0,0,0)\mid u_0\in \bbR\}$.
    The split component $A_H$ of an algebraic group $H$ is defined by the split torus of the center of $H$.
    For $* \in \{0,S,J\}$, the group $A_*^\infty$ is the identity component of the split component of $P_*$ in the real topology.
    
    Let us introduce the Cartan involution $\theta$ of $G$ defined by $\theta(g):={}^tg^{-1}$ for $g\in G$.
    Then the group 
    \[
    K:=\{g\in G\mid\theta(g)=g\}=\left\{\left.
    \begin{pmatrix}
    A & B\\
    -B & A
    \end{pmatrix}\in G~\right|~A,B\in \mathrm{M}_2(\R)\right\}
    \]
    is a maximal compact subgroup of $G$.
    This is isomorphic to the unitary group $\mathrm{U}(2)$ of degree two  by the map
    \[
    K\ni
    \begin{pmatrix}
    A & B\\
    -B & A
    \end{pmatrix}\mapsto A+\sqrt{-1}\,B\in \mathrm{U}(2).
    \]
    Note that $G$ has an Iwasawa decomposition $G=N_0A_0^\infty K$ with the notation above.

    Following the standard manner of the notation, we denote the Lie algebras of real groups by the corresponding German letter~(Fraktur).
    For a real Lie algebra $\lv$, we denote its complexification by $\lv_{\C}$. 
    The Lie algebra $\mathfrak{g}$ of $G(\bbR)$ is given by $\{X\in \mathrm{M}_4\mid {}^tXJ_4+J_4X=0_4\}$.
    The Cartan involution of $\g$, also denoted by $\theta$, is defined by $\theta(X)=-{}^tX$ for $X\in\g$. Then $\g$ has the eigen-space decomposition $\g=\fk+\p$ with 
    \begin{align*}
    \fk&=\{X\in\g\in\mid \theta(X)=X\}=\left\{\left.
    \begin{pmatrix}
    A & B\\
    -B & A
    \end{pmatrix}~\right|~A,~B\in \mathrm{M}_2(\R),~{}^tA=-A,~{}^tB=B\right\},\\
    \p&=\{X\in\g\in\mid \theta(X)=-X\}=\left\{\left.
    \begin{pmatrix}
    A & B\\
    B & -A
    \end{pmatrix}~\right|~A,~B\in \mathrm{M}_2(\R),~{}^tA=A,~{}^tB=B\right\}.
    \end{align*}
    The former is nothing but the Lie algebra of $K$. Here we give a basis of $\frakk$ as follows:
    \begin{align*}
        K_{11}&:=
        \left(
        \begin{array}{cc|cc}
         & & -\sqrt{-1} &\\
         & & & 0\\
         \hline
         \sqrt{-1} & & &\\
         & 0 & &
        \end{array}
        \right),\quad
        K_{22}:=
        \left(
        \begin{array}{cc|cc}
        & & 0 &\\
         & & & -\sqrt{-1}\\
         \hline
         0 & & &\\
         & \sqrt{-1} & &
        \end{array}
        \right),
        \\
        K_{12}&:=\frac{1}{2}
        \left(
        \begin{array}{cc|cc}
        & 1 & & -\sqrt{-1}\\
        -1 & & -\sqrt{-1} &\\
        \hline
        & \sqrt{-1} & & 1\\
        \sqrt{-1} & & -1 &
        \end{array}
        \right),\quad 
        K_{21}:=\frac{1}{2}
        \left(
        \begin{array}{cc|cc}
        & -1 & & -\sqrt{-1}\\
        1 & & -\sqrt{-1} &\\
        \hline
        & \sqrt{-1} & & -1\\
        \sqrt{-1} & & 1 &
        \end{array}
        \right).
    \end{align*}

    We consider the root space decomposition of $\g_{\C}$ with respect to the complexification $\mathfrak{t}_{\C}$ of the compact Cartan subalgebra $\mathfrak{t}=\R T_1\oplus\R T_2$~(in $\fk$) with $T_1:=\sqrt{-1}\,K_{11}$ and $T_2:=\sqrt{-1}\,K_{22}$. 
    The dual space ${\mathfrak t}_{\C}^*$ of ${\mathfrak t}_{\C}$ has a basis $\{\beta_1,\beta_2\}$ given by
    \[
    \beta_i(T_j)=\sqrt{-1}\,\delta_{ij}.
    \]
    We denote $\beta\in{\mathfrak t}_{\C}^*$ by $(a,b)$ if $\beta=a\beta_1+b\beta_2$. 
    With this notation, the set of roots for the root space decomposition $(\g_{\C},{\mathfrak t}_{\C})$ is given by $\Delta:=\{\pm(2,0),~\pm(0,2),~\pm(1,1),~\pm(1,-1)\}$.
    This set has the standard choice of positive roots given by $\Delta^+:=\{(2,0),~(0,2),~(1,1),~(1,-1)\}$. 
    The set of roots $\{\pm(1,-1)\}$ forms the set of compact roots, whose root vectors are in $\fk_{\C}$. 
    Each root in $\{\pm(2,0),~\pm(0,2),~\pm(1,1)\}$ is called a non-compact root, whose root vector is in the complexification $\p_{\C}$ of $\p$.
    There is a standard choice of the root vectors:
    \[
    X_{\pm(2,0)}:=p_{\pm}\left(
    \begin{pmatrix}
    1 & 0\\
    0 & 0
    \end{pmatrix}\right),~
    X_{\pm(1,1)}:=
    p_{\pm}
    \left(
    \begin{pmatrix}
    0 & 1\\
    1 & 0
    \end{pmatrix}
    \right),~
    X_{\pm(0,2)}:=p_{\pm}
    \left(
    \begin{pmatrix}
    0 & 0\\
    0 & 1
    \end{pmatrix}
    \right),
    \]
    where we put
    \[p_{\pm}(X):=
    \begin{pmatrix}
    X & \pm\sqrt{-1}\,X\\
    \pm\sqrt{-1}\,X & -X
    \end{pmatrix}\in \mathrm{M}_4(\C)
    \]
    for real symmetric matrices $X$ of degree two.

    We also need the restricted root space decomposition with respect to the abelian Lie algebra $\fraka:=\R H_1\oplus\R H_2$ in $\frakp$ with \[
    H_1:=\diag(1,0,-1,0),~H_2:=\diag(0,1,0,-1).
    \]
    The restricted root system is given by $\Delta(\frakg,\fraka):=\{\pm2e_1,~\pm2e_2,~\pm e_1\pm e_2\}$ with the linear forms $e_1,~e_2$ of $\fraka$ defined by $e_i(H_j)=\delta_{ij}$ for $1\le i,j\le 2$. This is of the same type as the complex root system above.
    Let $\frakn:=\R E_{e_1-e_2}\oplus\R E_{e_1+e_2}\oplus\R E_{2e_1}\oplus\R E_{2e_2}$ be the subspace of $\frakg$ spanned by the root vectors 
    \[
    E_{e_1-e_2}:=E_{12}-E_{43},~E_{e_1+e_2}:=E_{14}+E_{23},~E_{2e_1}:=E_{13},~E_{2e_2}:=E_{24}
    \]
    for positive roots of the restricted root system, where $E_{ij}$ denotes the matrix unit indexed by $1\le i,j\le 4$.
    We have an Iwasawa decomposition
    \[
    \frakg=\frakn\oplus\fraka\oplus\frakk.
    \]
    For later use, we prepare the Iwasawa decomposition of the root vectors in $\frakp$ as follows:
    
    \begin{lem}\label{Iwasawa-decomp}
    For roots in $\Delta(\frakg_\bbC, \frakt_\bbC)$, root vectors can be described as follows:
    \begin{align*}
    &X_{\pm(2,0)}=\pm2\sqrt{-1}\,E_{2e_1}+H_1\pm K_{11},\quad X_{\pm(0,2)}=\pm2\sqrt{-1}\,E_{2e_2}+H_2\pm K_{22},\\
    &X_{(1,1)}=2(E_{e_1-e_2}+\sqrt{-1}\,E_{e_1+e_2})+2K_{21},\quad 
    X_{-(1,1)}=2(E_{e_1-e_2}-\sqrt{-1}\,E_{e_1+e_2})-2K_{12}.
    \end{align*}
    \end{lem}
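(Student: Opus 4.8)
The plan is to verify each of the four identities by a direct computation of $4\times 4$ matrices, organized according to the Iwasawa decomposition $\frakg=\frakn\oplus\fraka\oplus\frakk$. First I would expand the left-hand sides explicitly using the definition of $p_\pm$. For example, with $X=\diag(1,0)$ one gets $X_{(2,0)}=p_+(X)=E_{11}-E_{33}+\sqrt{-1}\,(E_{13}+E_{31})$ in terms of the matrix units $E_{ij}$, and the cases $X_{\pm(0,2)}$ and $X_{\pm(1,1)}$ are handled the same way; note that $X_{(1,1)}=p_+\!\left(\begin{smallmatrix}0&1\\1&0\end{smallmatrix}\right)$ spreads over the eight off-diagonal positions $(1,2),(2,1),(1,4),(2,3),(3,2),(4,1),(3,4),(4,3)$.

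Next I would record the matrix-unit expressions of the right-hand components: the restricted root vectors $E_{2e_1}=E_{13}$, $E_{2e_2}=E_{24}$, $E_{e_1-e_2}=E_{12}-E_{43}$, $E_{e_1+e_2}=E_{14}+E_{23}$ spanning $\frakn$; the elements $H_1=E_{11}-E_{33}$ and $H_2=E_{22}-E_{44}$ of $\fraka$; and the compact basis, read off the displayed matrices as $K_{11}=-\sqrt{-1}\,(E_{13}-E_{31})$, $K_{22}=-\sqrt{-1}\,(E_{24}-E_{42})$, together with $K_{12}$ and $K_{21}$.

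The organizing observation that keeps the computation short is that the Cartan involution acts by $\theta(E_{ij})=-E_{ji}$, so each lower-index matrix unit $E_{ji}$ occurring in a non-compact root vector can be rewritten through a $\frakk$-basis element as $E_{ji}=(\text{a }K\text{-vector})\pm E_{ij}$. Consequently the diagonal part of each $X_{\pm(2,0)},X_{\pm(0,2)}$ is exactly its $\fraka$-component $H_1$ or $H_2$, and the remaining off-diagonal terms collapse into a single $\frakn$-root vector plus a $K_{ii}$; for $X_{\pm(1,1)}$ the two matrix units of $E_{e_1\pm e_2}$ pair up with $K_{12},K_{21}$ in the same way. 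Substituting these relations and collecting the coefficient of each $E_{ij}$ then reproduces the four stated formulas.

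Since every step is an explicit identity of $4\times 4$ matrices, there is no genuine obstacle beyond bookkeeping. The only point requiring care will be the sign and normalization conventions: the factors $\sqrt{-1}$ in $p_\pm$, the factor $\tfrac12$ appearing in $K_{12}$ and $K_{21}$, and the entanglement of $E_{e_1\pm e_2}$ across two matrix units must be tracked consistently so that, for instance in the $(1,1)$ case, the identity $2(E_{e_1-e_2}+\sqrt{-1}\,E_{e_1+e_2})+2K_{21}=X_{(1,1)}$ holds entry by entry.
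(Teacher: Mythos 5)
Your proposal is correct: the paper states Lemma \ref{Iwasawa-decomp} without proof, as a preparatory computation, and your entry-by-entry matrix verification is exactly the intended argument. I checked the least obvious case with your conventions: $2(E_{e_1-e_2}+\sqrt{-1}\,E_{e_1+e_2})=2E_{12}-2E_{43}+2\sqrt{-1}(E_{14}+E_{23})$ together with $2K_{21}=-E_{12}+E_{21}-\sqrt{-1}\,E_{14}-\sqrt{-1}\,E_{23}+\sqrt{-1}\,E_{32}+\sqrt{-1}\,E_{41}-E_{34}+E_{43}$ does sum to $E_{12}+E_{21}+\sqrt{-1}(E_{14}+E_{23}+E_{32}+E_{41})-E_{34}-E_{43}=p_+\!\left(\begin{smallmatrix}0&1\\1&0\end{smallmatrix}\right)=X_{(1,1)}$, confirming that your bookkeeping of the signs, the factor $\tfrac12$ in $K_{12},K_{21}$, and the $\theta(E_{ij})=-E_{ji}$ device all work as claimed.
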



\section{Representations of real groups}\label{section_rep_Lie_group}
    
    This section presents fundamental facts on representations of real groups, which we use to describe the archimedean aspect of this paper.
    As mentioned in Section \ref{Basic-Notation}, we keep the notation of real groups until Section \ref{section_Whitt}.
    
\subsection{Discrete series representations of $\SL_2(\R)$ and $\SL_2^{\pm}(\R)$}\label{DS-SL(2)}
    
    For $n\in\Z_{>1}$, we let $\cD_n^+$~(resp.~$\cD_n^-$) be a discrete series representation of $\SL_2(\R)$ with lowest weight $n$~(resp.~highest weight $-n$).
    The discrete series representation $\cD_n^{\pm}$ has a representation space with a basis $\{w_\ell\mid \ell\in \pm n\pm\Z_{\ge 0}\}$ satisfying
    \[
    \cD^{\pm}_n(U)w_\ell=\ell w_\ell,\quad\cD^{\pm}_n(V^{\pm})w_\ell=(\pm n\pm \ell)w_{\ell\pm 2},
    \]
    where
    \[
    U:=
    \begin{pmatrix}
    0 & -\sqrt{-1}\\
    \sqrt{-1} & 0
    \end{pmatrix},\quad
    V^{\pm}:=\frac{1}{2}
    \begin{pmatrix}
    1 & \pm\sqrt{-1}\\
    \pm\sqrt{-1} & -1
    \end{pmatrix}.
    \]
    Recall that we have introduced  $\SL^{\pm}_2(\R):=\{g\in \GL_2(\R)\mid\det(g)\in\{\pm 1\}\}$.
    Discrete series representations of $\SL_2^{\pm}(\R)$ are of the form
    \[
    {\rm Ind}_{\SL_2(\R)}^{\SL_2^{\pm}(\R)}\cD_n^+\simeq {\rm Ind}_{\SL_2(\R)}^{\SL_2^{\pm}(\R)}\cD_n^-,
    \]
    which is isomorphic to $\cD_n^+\oplus\cD_n^-$ as unitary representations of $\SL_2(\R)$.
    We denote this discrete series representation by $\cD_n$.
    
\subsection{Representations of the maximal compact subgroup $K$}\label{RepMaxCpt}
    
    Irreducible finite dimensional representations of a compact linear connected real reductive group are parametrized by dominant weights.
    For the maximal compact subgroup $K$ of $G$, the set of equivalence classes of irreducible finite dimensional representations of $K$ are in bijection with the set of the dominant weights $\{(\Lambda_1,\Lambda_2)\in\Z^2\mid\Lambda_1\ge\Lambda_2\}$, where $(\Lambda_1,\Lambda_2)$ denotes the root $\Lambda_1\beta_1+\Lambda_2\beta_2$~(cf.~Section \ref{Real-spgp}).
    The dominance respects the compact positive root $(1,-1)$.
    This is noting but the fact that the set of equivalence classes of irreducible finite dimensional representations of $\mathrm{U}(2)\simeq K$ is parametrized by this set of dominant weights.
    For each dominant weight $\Lambda=(\Lambda_1,\Lambda_2)$, let $\tl$ be the irreducible representation of $K$ parametrized by $\Lambda$, which is the pullback of the irreducible representation $\det^{2\Lambda_2}\Sym^{\Lambda_1-\Lambda_2}$ of $\mathrm{U}(2)$ via the isomorphism $K\simeq \mathrm{U}(2)$.
    Here, $\Sym^{\Lambda_1-\Lambda_2}$ denotes the $(\Lambda_1-\Lambda_2)$-th symmetric tensor product representation of the two-dimensional standard representation of $\mathrm{U}(2)$. We also use $\tl$ to denote the infinitesimal action of $\tl$.

    We introduce a basis $\{Z,H,Y,Y'\}$ of $\fk_{\C}$ as follows:
    \[
    Z:=
    \begin{pmatrix}
    0_2 & -\sqrt{-1}\,\mathbf{1}_2\\
    \sqrt{-1}\,\mathbf{1}_2 & 0_2
    \end{pmatrix},~H:=\frac{1}{\sqrt{-1}}(T_1-T_2),~Y:=
    \begin{pmatrix}
    J_2 & 0_2\\
    0_2 & J_2
    \end{pmatrix},~Y':=
    \begin{pmatrix}
    0_2 & J'_2\\
    -J'_2 & 0_2
    \end{pmatrix}
    \]
    with $J_2:=
    \left(
    \begin{smallmatrix}
    0 & 1\\
    -1 & 0
    \end{smallmatrix}\right)$ and $J'_2=
    \left(
    \begin{smallmatrix}
    0 & 1\\
    1 & 0
    \end{smallmatrix}\right)$. 
    Then, the set $\{ H,X:=\frac{1}{2}(Y-\sqrt{-1}\,Y'),\overline{X}:=\frac{1}{2}(-Y-\sqrt{-1}\,Y')\}$ forms an ${\fraks\frakl}_2$-triple over $\bbC$. In fact, $[H,X]=2X,~[H,\overline{X}]=-2\overline{X},~[X,\overline{X}]=H$ holds. The element $Z$ belongs to the center. 
    
    We realize the representation $\tl$ on 
    \[
    V_{\Lambda}:=\{f\in\C[x_1,x_2]\mid \text{$f$ is homogeneous of degree $\Lambda_1-\Lambda_2$}\}.
    \]
    The representation space $V_{\Lambda}$ of $\tl$ has the dimension $d_{\Lambda}+1$ with $d_{\Lambda}=\Lambda_1-\Lambda_2$. We give two bases of $V_{\Lambda}$ as follows:
    \begin{align*}
    &\{v_i:=x_1^ix_2^{d_{\Lambda}-i}\mid 0\le i\le d_{\Lambda}\},\\
    &\{u_i:=(x_1+\sqrt{-1}\,x_2)^i(x_1-\sqrt{-1}\,x_2)^{d_{\Lambda}-i}\mid 0\le i\le d_{\Lambda}\}.
    \end{align*}
    The first basis satisfies
    \begin{align*}
    \tl(Z)v_k&=(\Lambda_1+\Lambda_2)v_k,\quad\tl(X)v_k=(d_{\Lambda}-k)v_{k+1},\\
    \tl(H)v_k&=(2k-d_{\Lambda})v_k,\quad\tl(\overline{X})v_k=kv_{k-1}
    \end{align*}
    for $0\le k\le d_{\Lambda}$.
    As for the second basis, we write down the following property
    \[
    \tl(Z')u_k=(2k-d_{\Lambda})u_k,~Z'=
    \begin{pmatrix}
    J_2 & 0_2\\
    0_2 & J_2
    \end{pmatrix}\in\fk
    \]
    for $0\le k\le d_{\Lambda}$. 
    
    Later, we will often need the contragredient representation $(\tl^*,V_{\Lambda}^*)$ with the representation space $V_{\Lambda}^*$.
    Its highest weight is $(-\Lambda_2,-\Lambda_1)$. 
    The space $V_{\Lambda}^*$ has a basis $\{v_k^*\}_{0\le k\le d_{\Lambda}}$ satisfying
    \begin{align*}
    \tl^*(Z)v_k^*&=(-\Lambda_1-\Lambda_2)v_k^*,\quad\tl^*(X)v_k^*=(k+1)v_{k+1}^*,\\
    \tl^*(H)v_k^*&=(2k-d_{\Lambda})v_k^*,\quad\tl^*(\overline{X})v_k^*=(d_{\Lambda}+1-k)v_{k-1}^*
    \end{align*}
    for $0\le k\le d_{\Lambda}$.
    This is nothing but what is obtained by replacing $(\Lambda_1,\Lambda_2)$ with $(-\Lambda_2,-\Lambda_1)$. As $\{u_i\mid 0\le i\le d_{\Lambda}\}$ is related to $\{v_i\mid 0\le i\le d_{\Lambda}\}$ we also provide a similar basis $\{u_i^*\mid 0\le i\le d_{\Lambda}\}$ of $V_{\Lambda}^*$ which is similarly related to $\{v_i^*\mid 0\le i\le d_{\Lambda}\}$ 
 and thus satisfies
     \begin{equation}\label{u-basis}
    \tl(Z')u^*_k=(2k-d_{\Lambda})u^*_k,~Z'=
    \begin{pmatrix}
    J_2 & 0_2\\
    0_2 & J_2
    \end{pmatrix}\in\fk
    \end{equation}
    for $0\le k\le d_{\Lambda}$.
    Note that $d_{\Lambda}$ remains the same under such replacement of the highest weights.
    
\subsection{Discrete series representations of $G$}\label{DS-rep}
    
    We provide Harish-Chandra's parametrization of discrete series representations for $G$, namely, irreducible unitary representations of $G$ whose matrix coefficients belong to $L^2(G)$.
    Our discussion is based on \cite[Theorem 16]{1966_Harish-Chandra} and \cite[Theorem 9.20,~Theorem 12.21]{2001_Knapp} with the help of \cite{1978_Kostant} and \cite{1978_Vogan}. 

    To parametrize such representations we need regular dominant analytically integral weights, namely, regular dominant weights coming from the derivatives of unitary characters of the compact Cartan subgroup $\exp({\mathfrak t})$ (see Section \ref{Real-spgp} for the notation ${\mathfrak t}$).
    Note that the unitary characters of $\exp({\mathfrak t})$ are parametrized by 
    \[
    \{a\beta_1+b\beta_2\mid (a,b)\in\Z^2\}\simeq\Z^2.
    \]
    We now see that the set of regular analytically integral weights is in bijection with 
    \[
    \Xi':=\{\lambda=(\lambda_1,\lambda_2)\in\Z^2\mid \text{$\langle\lambda,\beta\rangle\not=0$ for any $\beta\in\Delta^+$}\}.
    \] 
    Here, $\langle*,*\rangle$ denotes the inner product induced by the Killing form of ${\mathfrak g}$, which can be regarded as the standard inner product of the two-dimensional Euclidean space $\R^2$ in terms of the bijection $\{a\beta_1+b\beta_2\mid (a,b)\in\R^2\}\simeq\R^2$. 

    With the Harish-Chandra parametrization, we mean the classification of discrete series representations in terms of the infinitesimal equivalence~(i.e., the unitary equivalence of discrete series).
    Let $\calD_{\lambda}$ be the discrete series representation of $G$ parametrized by $\lambda\in\Xi'$.
    The representations $\calD_{\lambda}$ and $\calD_{\lambda'}$ are infinitesimally equivalent if and only if $\lambda$ and $\lambda'$ are conjugate by the Weyl group of $\fk$. 
    As a result, we see that the equivalence classes of discrete series representations of $G$ are in bijection with 
    \[
    \Xi:=\{\lambda\in\Xi'\mid \langle\lambda,(1,-1)\rangle>0\}=\{(\lambda_1,\lambda_2)\in\Xi'\mid \lambda_1>\lambda_2\},
    \]
    each of which is called a Harish-Chandra parameter~(cf.~\cite[Terminology after Theorem 9.20]{2001_Knapp}). 

    Now we introduce the following four sets of positive root system including the set $\Delta_{c}^+:=\{(1,-1)\}$ of the compact positive root:
    \begin{align*}
    \Delta_{I}^+&:=\{(2,0),~(0,2),~(1,1),~(1,-1)\},\\
    \Delta_{II}^+&:=\{((2,0),~(0,-2),~(1,1),~(1,-1)\},\\
    \Delta_{III}^+&:=\{(2,0),~(0,-2),~(-1,-1),~(1,-1)\},\\
    \Delta_{IV}^+&:=\{(-2,0),~(0,-2),~(-1,-1),~(1,-1)\}.
    \end{align*}
    Corresponding to the above four sets, we introduce the four sets of regular dominant weights as follows:
    \begin{align*}
    \Xi_{I}&:=\{(\lambda_1,\lambda_2)\in\Z_{>0}^2\mid\lambda_1>\lambda_2\},\\
    \Xi_{II}&:=\{(\lambda_1,\lambda_2)\in\Z_{>0}\times\Z_{<0}\mid\lambda_1>-\lambda_2\},\\
    \Xi_{III}&:=\{(\lambda_1,\lambda_2)\in\Z_{>0}\times\Z_{<0}\mid\lambda_1<-\lambda_2\},\\
    \Xi_{IV}&:=\{(\lambda_1,\lambda_2)\in\Z_{<0}^2\mid\lambda_1>\lambda_2\}.
    \end{align*}
    Note that $\Xi=\coprod_{*=I,II,III,IV}\Xi_{*}$.
    Discrete series representations of $G$ parametrized by $\Xi_{I}$~(resp.~$\Xi_{IV}$) are called holomorphic discrete series representations~(resp.~anti-holomorphic discrete series representations).
    Discrete series representations of $G$ parametrized by $\Xi_{II}\cup\Xi_{III}$ are called large in the sense of Vogan \cite[Section 6]{1978_Vogan}. 
    The large discrete series representations are characterized by the maximality of their Gelfand-Kirillov dimensions (or by having the minimal primitive ideals) among the discrete series representations.
    They are known to admit Whittaker models~(cf.~\cite[Theorem 6.8.1]{1978_Kostant}) and are therefore also called generic discrete series representations.
    In fact, they admit  rapidly decreasing Whittaker models~(cf.~Oda \cite{1994_Oda}).

    For $(\lambda_1,\lambda_2)\in\Xi$, the discrete series representation parametrized by $(-\lambda_2,-\lambda_1)$ is contragredient to that parametrized by $(\lambda_1,\lambda_2)$. We then see that the discrete series representations parametrized by $\Xi_{II}$ and $\Xi_{III}$ are in bijection with the contragredient representations of those parametrized by $\Xi_{III}$ and $\Xi_{II}$, respectively. 

    For each $\lambda\in\Xi$, recall that 
    \[
    \Lambda=(\Lambda_1,\Lambda_2):=\lambda+\rho_n-\rho_c
    \]
    with the half sum $\rho_n$~(resp.~$\rho_c$) of non-compact positive roots~(resp.~compact positive roots) is called the Blattner parameter~(cf.~\cite[Terminology after Theorem 9.20]{2001_Knapp}), which provides the highest weight of the minimal $K$-type~(cf.~\cite[pp.~626]{2001_Knapp}) of the discrete series representation $\calD_{\lambda}$. 
    We should point out that the minimal $K$-type is the most important multiplicity one $K$ type of the discrete series representation. 
    For $\lambda=(\lambda_1,\lambda_2)\in\Xi_{I}$ or $\Xi_{IV}$ we have $\Lambda=(\lambda_1+1,\lambda_2+2)$ or $(\lambda_1-2,\lambda_2-1)$, respectively. 
    When $\lambda=(\lambda_1,\lambda_2)$ is in $\Xi_{II}$ or $\Xi_{III}$, we have $\Lambda=(\lambda_1+1,\lambda_2)$ or $(\lambda_1,\lambda_2-1)$, respectively.

\section{Embedding of the large discrete series representations into parabolically induced representations of {$G$}}\label{section_emb_ind_rep}
    
    In this section, for later use, we discuss an embedding of large discrete series representations into generalized principal series representations.
    The socle series of such representations are determined by \cite{2009_Muic} for many cases.
    We discuss the remaining cases and apply the result to investigate the constant terms of automorphic forms in Section \ref{section_main}.
    
\subsection{Induction from $P_S$}
    
    We consider the following induced representations:
    \[
    \Ind_{P_S}^G \left(|\cdot|^{\frac{\lambda_1-\lambda_2}{2}} \otimes \calD_{\lambda_1+\lambda_2+1}\right), \qquad \Ind_{P_S}^G \left(|\cdot|^{\frac{\lambda_1+\lambda_2}{2}} \otimes \calD_{\lambda_1-\lambda_2+1}\right).
    \]
    Here, the symbol $|\cdot|$ means the absolute value of the determinant of $L_S(\bbR) = \GL_2(\bbR)$ and $\calD_k$ is the discrete series representation of $\GL_2(\bbR)$ with the trivial central character.
    Note that the representations $|\cdot|^{\frac{\lambda_1-\lambda_2}{2}} \otimes\calD_{\lambda_1+\lambda_2+1}$ and $|\cdot|^{\frac{\lambda_1+\lambda_2}{2}} \otimes \calD_{\lambda_1-\lambda_2+1}$ are isomorphic to $\delta(|\cdot|^{(\lambda_1+\lambda_2)/2}\sgn^{\lambda_2},\lambda_1-\lambda_2)$ and $\delta(|\cdot|^{(\lambda_1-\lambda_2)/2}\sgn^{\lambda_2},\lambda_1+\lambda_2)$, respectively, as in \cite{2009_Muic}.
    We also note that the large discrete series representation $D_{(\lambda_1,\lambda_2)}$ with the parameter $(\lambda_1,\lambda_2)$ of type II or III is the same as $X(\lambda_1,\lambda_2)$, as in \cite{2009_Muic}.
    The tensor product $\calD_k \otimes \sgn$ is isomorphic to $\calD_k$.

    \begin{lem}\label{emb_ps}
    \begin{enumerate}
    \item Let $\lambda=(\lambda_1,\lambda_2) \in \Xi_{II}$.
    The large discrete series representation $\calD_\lambda$ can then be embedded into
    \[
    \Ind_{P_S}^G \left(|\cdot|^{\frac{\lambda_1-\lambda_2}{2}} \otimes \calD_{\lambda_1+\lambda_2+1}\right)
    \isom 
    \Ind_{P_S}^G
    \left(|\cdot|^{\frac{\lambda_1-\lambda_2}{2}} \sgn \otimes \calD_{\lambda_1+\lambda_2+1}\right)
    \]
    and
    \[
    \Ind_{P_S}^G
    \left(|\cdot|^{\frac{\lambda_1+\lambda_2}{2}} \otimes \calD_{\lambda_1-\lambda_2+1}\right)
    \isom
    \Ind_{P_S}^G \left(|\cdot|^{\frac{\lambda_1+\lambda_2}{2}}\sgn \otimes \calD_{\lambda_1-\lambda_2+1}\right).
    \]
    \item Let $\lambda = (\lambda_1,\lambda_2) \in \Xi_{III}$.
    The large discrete series representation $\calD_\lambda$ can then be embedded into
    \[
    \Ind_{P_S}^G
    \left(|\cdot|^{\frac{\lambda_1-\lambda_2}{2}} \otimes \calD_{-\lambda_1-\lambda_2+1}
    \right)
    \isom
    \Ind_{P_S}^G
    \left(|\cdot|^{\frac{\lambda_1-\lambda_2}{2}} \sgn\otimes \calD_{-\lambda_1-\lambda_2+1}\right)
    \]
    and
    \[
    \Ind_{P_S}^G
    \left(|\cdot|^{-\frac{\lambda_1+\lambda_2}{2}} \otimes \calD_{\lambda_1-\lambda_2+1}
    \right)
    \isom
    \Ind_{P_S}^G
    \left(|\cdot|^{-\frac{\lambda_1+\lambda_2}{2}}\sgn \otimes \calD_{\lambda_1-\lambda_2+1}\right).
    \]
    \end{enumerate}
    \end{lem}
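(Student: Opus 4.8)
The plan is to treat separately the two assertions packaged in each item: the displayed isomorphisms relating an induced representation to its $\sgn$-twist, and the embedding of $\calD_\lambda$ itself. The isomorphisms are immediate. The character $\sgn$ on $L_S \isom \GL_2(\bbR)$ is $g \mapsto \sgn(\det g)$, and $\calD_k \otimes \sgn \isom \calD_k$ was recorded just before the statement; hence for every real exponent $s$ one has $|\cdot|^{s}\sgn \otimes \calD_k \isom |\cdot|^{s}\otimes(\calD_k \otimes \sgn) \isom |\cdot|^{s}\otimes\calD_k$ as $L_S$-modules. Applying the exact functor $\Ind_{P_S}^{G}$ produces all four displayed isomorphisms simultaneously, in both type II and type III.

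For the embeddings I would first observe that every induced representation in sight is a Langlands standard module. Under the inequalities defining $\Xi_{II}$ and $\Xi_{III}$ each occurring exponent is strictly positive: $(\lambda_1-\lambda_2)/2>0$ and $(\lambda_1+\lambda_2)/2>0$ in type II, while $(\lambda_1-\lambda_2)/2>0$ and $-(\lambda_1+\lambda_2)/2>0$ in type III. Thus each $\Ind_{P_S}^{G}(|\cdot|^{s}\otimes\calD_k)$ is induced from the tempered representation $\calD_k$ of $M_S$ twisted by a positive character of $A_S^\infty$, so by the Langlands classification it has a unique irreducible quotient and its tempered constituents lie in the socle. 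A short computation of infinitesimal characters — transporting the datum $\calD_k\otimes|\cdot|^{s}$ through the Siegel Levi $\GL_2\hookrightarrow\Sp_4$, whose maximal torus is the full Cartan of $\Sp_4$ — returns the multiset $\{\lambda_1,\lambda_2\}$ in each of the four cases, matching the Harish-Chandra parameter of $\calD_\lambda$; so $\calD_\lambda$ is eligible to occur, and being a large discrete series it is tempered. Passing to the notation of \cite{2009_Muic} via the dictionary stated before the lemma, the claim becomes that $X(\lambda_1,\lambda_2)$ lies in the socle of the corresponding $\Ind_{P_S}^{G}\delta(\cdots)$, which for the parameters treated there is read off from the socle filtration computed in \cite{2009_Muic}.

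It remains to settle the parameters not covered by \cite{2009_Muic}. Here I would use Frobenius reciprocity for $(\frakg,K)$-modules: an embedding $\calD_\lambda\hookrightarrow\Ind_{P_S}^{G}\sigma$ is equivalent to a nonzero $L_S$-map from the Jacquet module $H_0(\frakn_S,\calD_\lambda)$ (in the appropriate opposite and normalization) onto $\sigma$. Since the $\frakn$-homology of a discrete series is given by the Hecht--Schmid description — a sum of one-dimensional $\frakt_\bbC$-weight spaces indexed by Weyl translates of $\lambda$ — the task reduces to isolating, among these weights, the $\SL_2$-discrete-series constituent $\calD_{\lambda_1\pm\lambda_2+1}$ and the correct power of $|\cdot|$ on $A_S^\infty$; the Blattner parameter $\Lambda$ of Section \ref{DS-rep} dictates which extreme weight survives and fixes the exponent. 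Alternatively, since $\lambda\mapsto(-\lambda_2,-\lambda_1)$ interchanges $\Xi_{II}$ and $\Xi_{III}$ and sends $\calD_\lambda$ to its contragredient, item (2) can be derived from item (1) by dualizing, provided one converts the resulting cosocle statement for $\Ind_{\bar P_S}^{G}\sigma^\vee$ back to a socle statement for an induced representation from $P_S$, using the long Weyl element (which carries $\bar P_S$ to $P_S$ and $|\cdot|^{s}\otimes\calD_k$ to $|\cdot|^{-s}\otimes\calD_k$).

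The main obstacle I anticipate is exactly this last point: proving that $\calD_\lambda$ is a genuine \emph{submodule}, i.e.\ lies in the socle, and not merely a composition factor, in the degenerate parameter ranges. In the range where the standard module has the large discrete series as its unique irreducible submodule this is forced by the positivity of the exponent, but in the remaining ranges one must control the full socle filtration — either through the explicit $\frakn_S$-homology computation above, or by tracking the socle/cosocle duality under the contragredient, where the distinction between submodule and quotient is precisely what requires care.
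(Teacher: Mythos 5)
Your handling of the four displayed isomorphisms is correct and is exactly what the paper intends: $\calD_k\otimes\sgn\isom\calD_k$ on $L_S\isom\GL_2(\bbR)$, recorded immediately before the lemma, and then apply $\Ind_{P_S}^G$. For the embeddings, note that the paper's entire proof is a citation of \cite[Theorems 10.1, 10.3]{2009_Muic}: those theorems compute the composition series and socles of precisely these Siegel-parabolic standard modules $\Ind_{P_S}^G\delta(\cdot,\cdot)$ at regular integral infinitesimal character, i.e.\ for \emph{every} $\lambda\in\Xi_{II}\cup\Xi_{III}$ under the dictionary $\calD_\lambda = X(\lambda_1,\lambda_2)$, $|\cdot|^{s}\otimes\calD_k = \delta(\cdot,\cdot)$ stated before the lemma. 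So the one rigorous pillar of your argument --- reading the socle off Mui\'c --- coincides with the paper's proof, and your third and fourth paragraphs address a residual case (``parameters not covered by \cite{2009_Muic}'') that does not exist.

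The genuine gaps are in the material you add around that citation. First, the assertion that a standard module with strictly positive exponent has all its tempered constituents in the socle does not follow from the Langlands classification (which yields only the unique irreducible quotient), and it is not a citable general theorem; the placement of $\calD_\lambda$ in the socle is exactly the nontrivial content of Mui\'c's computation, so invoking it as a principle and then deferring to Mui\'c is circular. (Likewise, matching infinitesimal characters only makes $\calD_\lambda$ ``eligible''; it does not produce it as a constituent.) Second, your Frobenius-reciprocity fallback misdescribes its main input: the Hecht--Schmid picture of a sum of one-dimensional $\frakt_{\C}$-weight spaces indexed by Weyl translates of $\lambda$ pertains to the nilradical of a ($\theta$-stable) Borel relative to the compact Cartan, whereas the Casselman--Jacquet module $H_0(\frakn_S,\calD_\lambda)$ along the Siegel radical is a finite-length $(\mathfrak{gl}_2(\bbR),\mathrm{O}(2))$-module, not a sum of characters; isolating its $\GL_2(\bbR)$ discrete-series quotients is a computation of the same order as Mui\'c's, which you do not carry out. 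Third, as you yourself concede, the contragredient/long-Weyl-element route converts submodule statements into quotient statements and you supply no mechanism to return to the socle. As written, therefore, the proof is complete only where it repeats the paper's citation, and the supplementary arguments would not survive scrutiny.
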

    \begin{proof}
    The statements follow directly from \cite[Theorem 10.1]{2009_Muic} and \cite[Theorem 10.3]{2009_Muic}.
    \end{proof}
    
    \subsection{Induction from $P_J$}
    \begin{lem}\label{emb_pj}
    \begin{enumerate}
        \item 
    Let $\lambda=(\lambda_1,\lambda_2) \in \Xi_{II}$.
    For characters $\mu_1,\mu_2 \in \{1,\sgn\}$ of $A_J(\bbR)=\bbR^\times$, we can embed the large discrete series representation $\calD_\lambda$ into
    \[
    \text{
    $
    \Ind_{P_J}^G
    \left(\mu_1|\cdot|^{-\lambda_2} \otimes \calD^+_{\lambda_1+1}\right),
    \qquad
    \left(
    \text{resp.~}
    \Ind_{P_J}^G
    \left(\mu_2|\cdot|^{\lambda_1} \otimes \calD^+_{-\lambda_2+1}\right)
    \right)
    $
    }
    \]
    if and only if $\mu_1=\sgn^{\lambda_2}$ (resp.~$\mu_2=\sgn^{\lambda_1}$).
    
    \item Let $\lambda = (\lambda_1,\lambda_2) \in \Xi_{III}$.
    For characters $\mu_1,\mu_2 \in \{1,\sgn\}$ of $A_J(\bbR)$, we can embed the large discrete series representation $\calD_\lambda$ into
    \[
    \text{$
    \Ind_{P_J}^G \left(\mu_1|\cdot|^{-\lambda_2} \otimes \calD^-_{\lambda_1+1}\right), \qquad 
    \left(
    \left(
    \text{resp.~}\Ind_{P_J}^G \mu_2|\cdot|^{\lambda_1} \otimes \calD^-_{-\lambda_2+1}
    \right)
    \right)
    $}
    \]
    if and only if $\mu_1=\sgn^{\lambda_2}$ (resp.~$\mu_2=\sgn^{\lambda_1}$).
    \end{enumerate}
    \end{lem}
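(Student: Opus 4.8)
The plan is to detect the embeddings via the Jacquet module of $\calD_\lambda$ along $P_J$, and then to pin down the sign character by a central-character computation. By Frobenius reciprocity (Casselman's adjunction between parabolic induction and the Jacquet functor), for an admissible representation $\sigma$ of $L_J$ one has $\Hom_{(\frakg,K)}(\calD_\lambda, \Ind_{P_J}^G\sigma) \isom \Hom_{L_J}(H_0(\frakn_J, \calD_\lambda), \sigma)$, up to the normalization built into $\Ind$ and the passage to the opposite nilradical, where $\frakn_J = \Lie(N_J)$. Thus an embedding exists exactly when $\sigma$ is an $L_J$-quotient of the Jacquet module, and I would first analyze $H_0(\frakn_J, \calD_\lambda)$ as a finite-length module over $L_J \isom \GL_1 \times \SL_2$, read off through the Langlands decomposition $A_J^\infty M_J$.

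The continuous data are forced by two constraints. First, the infinitesimal character of $\Ind_{P_J}^G\sigma$ must match that of $\calD_\lambda$, namely the Weyl orbit of $\lambda = (\lambda_1, \lambda_2)$; equating the $A_J^\infty$-exponent and the infinitesimal character of the inducing $\SL_2$-discrete series leaves precisely the two shapes $|\cdot|^{-\lambda_2}\otimes\calD^+_{\lambda_1+1}$ and $|\cdot|^{\lambda_1}\otimes\calD^+_{-\lambda_2+1}$ in the type II case, and their $\calD^-$-analogues in the type III case. Second, the composition- and socle-series computations of \cite{2009_Muic} show that $\calD_\lambda$ genuinely occurs in the socle of the corresponding generalized principal series for exactly one sign $\mu \in \{1,\sgn\}$, which settles existence modulo the determination of that sign.

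To identify the sign I would restrict to the center $Z(G) = \{\pm\mathbf{1}_4\}$, which lies in $M_J$ with trivial $A_J^\infty$-part. Under $L_J \isom \GL_1 \times \SL_2$ the element $-\mathbf{1}_4$ corresponds to $(-1, -\mathbf{1}_2)$, so on $\sigma = \mu_1|\cdot|^{-\lambda_2}\otimes\calD^+_{\lambda_1+1}$ it acts by $\mu_1(-1)\cdot\calD^+_{\lambda_1+1}(-\mathbf{1}_2) = \mu_1(-1)(-1)^{\lambda_1+1}$, and since $\delta_{P_J}(-\mathbf{1}_4)=1$ this is also the scalar by which $-\mathbf{1}_4$ acts on $\Ind_{P_J}^G\sigma$. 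On the other hand $-\mathbf{1}_4 \in K$ maps to $-\mathbf{1}_2 \in \mathrm{U}(2)$, so it acts on all of $\calD_\lambda$ by the central scalar $(-1)^{\Lambda_1+\Lambda_2}$ of its minimal $K$-type $\tl$, equal to $(-1)^{\lambda_1+1+\lambda_2}$ for $\lambda \in \Xi_{II}$ since $\Lambda = (\lambda_1+1, \lambda_2)$. Equating the two scalars gives $\mu_1(-1) = (-1)^{\lambda_2}$, i.e. $\mu_1 = \sgn^{\lambda_2}$; the same computation on $\sigma = \mu_2|\cdot|^{\lambda_1}\otimes\calD^+_{-\lambda_2+1}$ yields $\mu_2 = \sgn^{\lambda_1}$, and the type III case (with $\Lambda = (\lambda_1, \lambda_2-1)$ and $\calD^-$) reproduces the stated conditions verbatim. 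Because a character in $\{1,\sgn\}$ is determined by its value at $-1$, this single equation simultaneously proves necessity: for the opposite sign the central characters disagree and no embedding can exist.

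The main obstacle is the existence (socle) step rather than the sign. Frobenius reciprocity together with the infinitesimal-character count only shows that $\calD_\lambda$ could appear as a subquotient; upgrading this to an honest submodule requires knowing that the minimal $K$-type survives as the generator of an $L_J$-quotient of $H_0(\frakn_J, \calD_\lambda)$ carrying exactly the prescribed $\SL_2$-discrete-series datum. Where \cite{2009_Muic} already records the socle this is immediate (as in Lemma \ref{emb_ps}); for the remaining cases I would compute the $\frakn_J$-homology directly from the explicit minimal-$K$-type realization of Section \ref{RepMaxCpt}, using the Iwasawa expressions for the non-compact root vectors in Lemma \ref{Iwasawa-decomp} to determine which weight vectors of $\calD_\lambda$ become zero modulo $\frakn_J\calD_\lambda$.
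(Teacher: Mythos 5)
Your proposal is correct, and its ``only if'' half follows a genuinely different route from the paper's. For the ``if'' (existence) direction you and the paper lean on the identical input, Mui\'c's socle computations (\cite[Theorems 10.1, 11.2(ii)]{2009_Muic}); your Jacquet-module/Frobenius-reciprocity framing and the fallback $\frakn_J$-homology computation are scaffolding that is never actually used---which is just as well, since the Casselman adjunction for real groups has topological subtleties you only gesture at---but nothing rests on them, because Mui\'c's theorems cover exactly these cases and the paper simply cites them. The genuine divergence is in necessity: the paper invokes \cite[Lemma 9.4]{2009_Muic} to say that with the wrong sign every constituent of the induced representation is non-tempered, hence cannot be the tempered $\calD_\lambda$; you instead compare the scalars by which the central element $-\mathbf{1}_4$ acts. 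Your computation checks out in all four cases: under $L_J \isom \GL_1\times\SL_2$ one has $-\mathbf{1}_4=(-1,-\mathbf{1}_2)$ and the modulus character is trivial on it, so it acts on $\Ind_{P_J}^G\left(\mu_1|\cdot|^{-\lambda_2}\otimes\calD_{\lambda_1+1}^{\pm}\right)$ by $\mu_1(-1)(-1)^{\lambda_1+1}$ and on $\Ind_{P_J}^G\left(\mu_2|\cdot|^{\lambda_1}\otimes\calD_{-\lambda_2+1}^{\pm}\right)$ by $\mu_2(-1)(-1)^{-\lambda_2+1}$, while it acts on $\calD_\lambda$ by $(-1)^{\Lambda_1+\Lambda_2}=(-1)^{\lambda_1+\lambda_2+1}$ for both Blattner parameters $\Lambda=(\lambda_1+1,\lambda_2)$ (type II) and $\Lambda=(\lambda_1,\lambda_2-1)$ (type III); equating scalars forces $\mu_1=\sgn^{\lambda_2}$ and $\mu_2=\sgn^{\lambda_1}$. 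Your argument is more elementary and self-contained than the paper's (no appeal to Mui\'c's Lemma 9.4, only the Blattner parameter and the central character of $\SL_2(\bbR)$ discrete series), and since $-\mathbf{1}_4$ acts by a scalar on the whole induced module it even excludes $\calD_\lambda$ as a subquotient, so nothing is lost in strength; what the paper's temperedness argument buys is that it stays entirely inside Mui\'c's machinery and needs no case-by-case evaluation of parameters, but either argument completes the lemma.
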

    \begin{proof}
    We only prove the case of type II as the case of type III is similar.
    The ``if part'' follows from \cite[Theorem 10.1]{2009_Muic} and \cite[Theorem 11.2(ii)]{2009_Muic}.
    Suppose $\mu_1\neq\sgn^{\lambda_2}$ and $\mu_2\neq\sgn^{\lambda_1}$.
    Then, all the constituents of the induced representations are non-temperd by \cite[Lemma 9.4]{2009_Muic}, for which the discrete series representations are tempered.
    \end{proof}
    
    For the case of $P_0$, we need to investigate the degenerate Whittaker functions associated with the trivial character.
    We give the statements in \S \ref{section_min_parab}.
    
    \section{Generalized Whittaker functions for degenerate characters of unipotent radicals}\label{section_Whitt}
    
    Both the Whittaker functions and the Fourier-Jacobi type spherical functions taken up in this section are important ingredients of the archimedean aspect of our paper.
    In contrast to Whittaker functions in the usual sense, they are attached to degenerate characters of $N_0$ and $N_J$.
    We call these generalized Whittaker functions and provide their explicit formulas.

\subsection{Whittaker functions for degenerate characters of $N_0$}\label{Deg-Whitt}
    
    Recall that $N_0$ denotes the unipotent radical of the minimal parabolic subgroup $P_0$~(cf.~Section \ref{Real-spgp}).
    Unitary characters of $N_0$ are of the form
    \[
    N_0\ni n(u_0,u_1,u_2,u_3)\mapsto\exp \left(2\pi\sqrt{-1}(c_0u_0+c_3u_3)\right)\in\C^1
    \]
    with $(c_0,c_3)\in\R^2$.
    For a fixed unitary character $\psi$ of $N_0$ and an irreducible representation $\tau$ of $K$, we introduce the following two spaces of $C^\infty$ functions:
    \begin{align*}
    C^{\infty}_{\psi}(N_0\backslash G))&:=\{f:G \rightarrow \bbC\mid \text{$f(ng)=\psi(n)f(g)$ for any $(n,g)\in N_0\times G$}\},\\
    C^{\infty}_{\psi,\tau}(N_0\backslash G/K)&:=\{w:G \rightarrow  V_\tau \mid \text{$w(ngk)=\psi(n)\tau(k)^{-1}w(g)$ for any $(n,g,k)\in N_0\times G\times K$}\}.
    \end{align*}
    Here, $V_{\tau}$ denotes the representation space of $\tau$.
    We then define the space of Whittaker functions for a large discrete series representation $\calD_{\lambda}$ with $K$-type $\tau$ by the image of the restriction map
    \[
    \Hom_{(\g,K)}(\calD_{\lambda},C^{\infty}_{\psi}(N_0\backslash G))\ni\phi\mapsto\phi\circ\iota\in\Hom_K(\tau,C^{\infty}_{\psi}(N_0\backslash G))
    \simeq
    C^{\infty}_{\psi,\tau^*}(N_0\backslash G/K)
    \]
    to the $K$-type $\tau$, where $\iota:\tau\hookrightarrow\calD_{\lambda}$ is a $K$-inclusion and $\tau^*$ denotes the contragredient of $\tau$.
    Note here that the multiplicity of $\tau$ in $\calD_{\lambda}$ should be one to ensure the well-defined notion of the Whittaker functions. 
    The Whittaker functions in the usual sense are attached to non-degenerate characters of $N_0$, i.e., $c_0c_3\not=0$.
    Here, we are interested in Whittaker functions for degenerate unitary characters with
    \[
    (i)~c_0\not=0,~c_3=0\quad(ii)~c_0=c_3=0.
    \]
    We now take $\tau$ to be the minimal $K$-type $\tl$ of $\calD_{\lambda}$, and let $\Phi(g):=\sum_{i=0}^{d_{\Lambda}}\phi_i(g)v_i^*$ be the Whittaker function for $\calD_{\lambda}$ with $K$-type $\tl$ and $d_{\Lambda}:=\Lambda_1-\Lambda_2$ (cf.~Section \ref{RepMaxCpt}).
    We write down the differential equations characterizing the Whittaker functions.
    They arise from the infinitesimal action of the Dirac-Schmid operator.
    The following lemma is obtained by a direct calculation of \cite[Lemma (6.2) (ii)]{1994_Oda}, for which note that the Whittaker functions are defined for contragredient representations of large discrete series representations in \cite{1994_Oda}.
    
    \begin{lem}\label{Diffeq-basic}
    Let $\lambda\in\Xi_{II}$. The Whittaker function $\Phi$ for $\calD_{\lambda}$ with the minimal $K$-type $\tl$ satisfies the following system of the differential equations:
    \begin{align*}
        &R(X_{(2,0)})\phi_i(g)-R(X_{(1,1)})\phi_{i+1}(g)+R(X_{(0,2)})\phi_{i+2}(g)
        =0,\\
        &R(X_{(0,-2)})\phi_i(g)+R(X_{(-1,-1)})\phi_{i+1}(g)+R(X_{(-2,0)})\phi_{i+2}(g)
        =0,\\
        &2jR(X_{(0,-2)})\phi_{j-1}(g)-(d_{\Lambda}-2j)R(X_{(-1,-1)})\phi_j(g)-2(d_{\Lambda}-j)R(X_{(-2,0)})\phi_{j+1}(g)
        =0,
    \end{align*}
    for any $0\le i\le d_{\Lambda}-2$ and $0\le j\le d_{\Lambda}$.
    Here, $R$ denotes the derivative of the right translation by $G$.
    \end{lem}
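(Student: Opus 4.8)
The plan is to read off the system as the explicit form of the Dirac--Schmid operator annihilating the minimal $K$-type of $\calD_\lambda$, matching it against \cite[Lemma (6.2)(ii)]{1994_Oda} while accounting for Oda's use of the contragredient. First I would isolate the $(\g,K)$-module mechanism. The function $\Phi$ realizes the minimal $K$-type $\tl$ through its contragredient, so it is valued in $V_\Lambda^*$ with components $\phi_i$ along $v_i^*$. For a noncompact root $\beta$, the derivative $R(X_\beta)\Phi$ encodes the Whittaker function of $X_\beta\cdot v$ for $v$ in the minimal $K$-type, and hence transforms under the $K$-types occurring in $\p_\C\otimes V_\Lambda^*$. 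The decisive structural fact is Schmid's description of discrete series: since $\tl$ is the \emph{minimal} $K$-type attached to the positive system $\Delta_{II}^+$, the vector $\p_\C\cdot v$ has nonzero components only in the $K$-types $\Lambda+\beta$ with $\beta$ a type-II noncompact positive root, namely $\beta\in\{(2,0),(0,-2),(1,1)\}$. Thus, assembling the $R(X_\beta)\Phi$ into a $\p_\C\otimes V_\Lambda^*$-valued gradient (using the Killing form to identify $\p_\C\cong\p_\C^*$), the projection onto every other $K$-isotypic summand must vanish, and these vanishings are exactly the asserted equations.

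Next I would enumerate the forbidden summands. Running the $\mathrm{U}(2)$ Clebsch--Gordan rule on $\p_\C\otimes V_\Lambda^*=(\p_+\otimes V_\Lambda^*)\oplus(\p_-\otimes V_\Lambda^*)$, the first factor contributes the constituents dual to $\Lambda+(2,0)$, $\Lambda+(1,1)$, $\Lambda+(0,2)$ and the second those dual to $\Lambda+(0,-2)$, $\Lambda+(-1,-1)$, $\Lambda+(-2,0)$. Comparing with the admissible list $\{\Lambda+(2,0),\Lambda+(0,-2),\Lambda+(1,1)\}$ leaves exactly three forbidden constituents: $\Lambda+(0,2)$, the lowest-spin constituent of $\p_+\otimes V_\Lambda^*$; and $\Lambda+(-2,0)$, $\Lambda+(-1,-1)$, the lowest-spin and middle constituents of $\p_-\otimes V_\Lambda^*$. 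Their dimensions are $d_\Lambda-1$ (for $\Lambda+(0,2)$ and for $\Lambda+(-2,0)$) and $d_\Lambda+1$ (for $\Lambda+(-1,-1)$), matching the three ranges $0\le i\le d_\Lambda-2$, $0\le i\le d_\Lambda-2$, $0\le j\le d_\Lambda$; accordingly I would identify the first equation with the projection to $\Lambda+(0,2)$, the second with the projection to $\Lambda+(-2,0)$, and the third with the projection to $\Lambda+(-1,-1)$.

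To pin down the coefficients I would make the three $K$-projections explicit weight by weight, using the $\fk_\C$-action formulas of Section~\ref{RepMaxCpt}, in particular $\tl^*(X)v_k^*=(k+1)v_{k+1}^*$, $\tl^*(\overline X)v_k^*=(d_\Lambda+1-k)v_{k-1}^*$, $\tl^*(H)v_k^*=(2k-d_\Lambda)v_k^*$, together with the $\mathrm{ad}(H)$-weights $+2,0,-2$ of $X_{(2,0)},X_{(1,1)},X_{(0,2)}$ and $+2,0,-2$ of $X_{(0,-2)},X_{(-1,-1)},X_{(-2,0)}$ under the $\mathfrak{sl}_2$-triple $\{H,X,\overline X\}$. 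The projections onto the two lowest-spin constituents collapse to the constant coefficient patterns $(1,-1,1)$ and $(1,1,1)$ of the first two equations, while the projection onto the middle constituent of $\p_-\otimes V_\Lambda^*$ carries the genuinely weight-dependent Clebsch--Gordan coefficients $2j$, $-(d_\Lambda-2j)$, $-2(d_\Lambda-j)$ of the third; note that these vanish at $j=0$ and $j=d_\Lambda$, which is exactly what is needed so that the out-of-range terms $\phi_{-1}$ and $\phi_{d_\Lambda+1}$ never actually occur.

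The main obstacle is bookkeeping rather than a single hard step: I must reconcile Oda's normalizations and his use of the contragredient large discrete series with the basis conventions of Section~\ref{RepMaxCpt}, so that the sign patterns and the constant $2$ in the third equation emerge correctly, and I must verify that the non-unitary normalization of $\{v_k^*\}$ fixed in Section~\ref{RepMaxCpt} is precisely what makes the lowest-spin coefficients collapse to $\pm1$. The Iwasawa decomposition of the root vectors in Lemma~\ref{Iwasawa-decomp} is not needed here; it enters only later, when these abstract equations are turned into concrete differential equations on $A_0^\infty$.
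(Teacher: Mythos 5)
Your proposal is correct and takes essentially the same route as the paper: the paper's proof consists of the remark that the system is the infinitesimal Dirac--Schmid system obtained by direct calculation from Oda's Lemma (6.2)(ii), with the contragredient convention accounted for, which is exactly the forbidden-$K$-type projection argument you carry out. Your explicit version checks out in the paper's normalization --- the three families do correspond to the constituents $\Lambda+(0,2)$, $\Lambda+(-2,0)$, $\Lambda+(-1,-1)$, the lowest-spin patterns do collapse to $(1,-1,1)$ and $(1,1,1)$ with the basis $\tl(X)v_k=(d_\Lambda-k)v_{k+1}$, $\tl(\overline{X})v_k=kv_{k-1}$, and the third family carries the stated weight-dependent coefficients --- so the only caution is the minor bookkeeping you already flag (the Killing-form swap of $\p_+$ and $\p_-$ in labelling the dual constituents).
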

    
    We also will need the following lemma (cf.~\cite[Section  4]{2021_Narita}):
    \begin{lem}\label{Whittaker-MVWinv}
    Whittaker functions $W$ for $\lambda\in\Xi_{III}$ are related to those for $\lambda\in\Xi_{II}$ by
    \[
    W(\delta g\delta^{-1}\xi)
    \]
    and vice versa, 
    where $\delta:=
    \left(
    \begin{smallmatrix}
    -\mathbf{1}_2 & 0_2\\
    0_2 & \mathbf{1}_2
    \end{smallmatrix}
    \right),
    ~\xi:=
    \left(
    \begin{smallmatrix}
    J'_2 & 0_2\\
    0_2 & J'_2
    \end{smallmatrix}
    \right)$ with 
    $J'_2:=
    \left(
    \begin{smallmatrix}
    0 & 1\\
    1 & 0
    \end{smallmatrix}\right)$. 
    \end{lem}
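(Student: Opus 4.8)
The plan is to interpret the operator $T\colon W\mapsto\bigl(g\mapsto W(\delta g\delta^{-1}\xi)\bigr)$ as the realization, at the level of Whittaker functions, of the Moeglin--Vign\'eras--Waldspurger (contragredient) involution. First I would record two structural facts. A direct check of ${}^t\delta J_4\delta=-J_4$ shows that $\delta\in\mathrm{GSp}_4(\R)$ has similitude factor $-1$, while ${}^t\xi J_4\xi=J_4$ together with $\xi^2=\mathbf{1}_4$ shows $\xi\in K$ is an involution. Hence $\sigma:=\Ad(\delta)$ and $\Theta:=\Ad(\xi\delta)$ are automorphisms of $G=\Sp_4(\R)$; both preserve $\frakt$ and normalize $K$ (the latter since $\delta\in\mathrm O(4)$ and $\xi\in K$, so $K=G\cap\mathrm O(4)$ is stable), and $\sigma$ moreover preserves $N_0$.

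Next I would compute the effect of $\sigma$ on $N_0$ and on the degenerate characters. A short matrix computation gives $\sigma(n(u_0,u_1,u_2,u_3))=n(u_0,-u_1,-u_2,-u_3)$, so $\sigma$ fixes the coordinate $u_0$ and negates the others. Since the characters in the two cases of interest have $c_3=0$ and hence depend only on $u_0$, they are $\sigma$-invariant: $\psi\circ\sigma=\psi$. Combined with $\xi\in K$, this shows $T$ preserves $C^{\infty}_{\psi}(N_0\bs G)$, because $(Tf)(ng)=\psi(\sigma(n))(Tf)(g)=\psi(n)(Tf)(g)$.

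The heart of the argument is the intertwining identity $R(y)\circ T=T\circ R(\Theta(y))$ for $y\in G$ (and its infinitesimal form for $Y\in\frakg$), which follows from the relation $\sigma(y)\xi=\xi\,\Theta(y)$, itself a restatement of $\Theta=\Ad(\xi\delta)$ via $\xi^2=\mathbf1_4$. Therefore, if $\phi\in\Hom_{(\frakg,K)}(\calD_\lambda,C^{\infty}_{\psi}(N_0\bs G))$ realizes a Type II Whittaker functional, then $T\circ\phi\in\Hom_{(\frakg,K)}(\calD_\lambda\circ\Theta,C^{\infty}_{\psi}(N_0\bs G))$. To identify $\calD_\lambda\circ\Theta$ I would compute $\Theta$ on $\frakt=\R T_1\oplus\R T_2$: writing $T_1=E_{13}-E_{31}$ and $T_2=E_{24}-E_{42}$ one finds $\Ad(\delta)T_i=-T_i$ while $\Ad(\xi)$ swaps $T_1\leftrightarrow T_2$, so $\Theta(T_1)=-T_2$ and $\Theta(T_2)=-T_1$. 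Dually $\Theta$ sends the Harish-Chandra parameter $\lambda=(\lambda_1,\lambda_2)$ to $(-\lambda_2,-\lambda_1)$, which is precisely the contragredient parameter, is already dominant, and carries $\Xi_{II}$ onto $\Xi_{III}$ (cf.~Section \ref{DS-rep}). Hence $\calD_\lambda\circ\Theta\isom\calD_{(-\lambda_2,-\lambda_1)}$ with $(-\lambda_2,-\lambda_1)\in\Xi_{III}$.

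Finally I would match $K$-types and conclude. The transformation law yields $T\Phi(gk)=\tl^{*}(\Theta(k))^{-1}T\Phi(g)$, and since $\Theta$ sends the highest weight $(-\Lambda_2,-\Lambda_1)$ of $\tl^{*}$ to $(\Lambda_1,\Lambda_2)=\Lambda$, the representation $\tl^{*}\circ\Theta$ is isomorphic to the contragredient of the minimal $K$-type $\tau_{\Lambda'}$ of $\calD_{(-\lambda_2,-\lambda_1)}$, where $\Lambda'=(-\lambda_2,-\lambda_1-1)$. Thus $T\Phi$ is exactly the minimal-$K$-type Whittaker function for the Type III representation. The relation is symmetric because $T$ is an involution: from $\sigma^2=\mathrm{id}$, $\sigma(\xi)=\xi$ and $\xi^2=\mathbf{1}_4$ one gets $T^2=\mathrm{id}$, giving the ``vice versa'' statement. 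The principal obstacle is not conceptual—the passage to the contragredient is forced by the similitude $-1$ element—but the bookkeeping: verifying the $\sigma$-invariance of the degenerate characters, the compatibility of minimal $K$-types under $\Theta$, and the identification of the value spaces $V_{\Lambda}^{*}\isom V_{(\Lambda')}^{*}$ as $K$-modules.
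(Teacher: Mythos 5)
Your proposal is correct, and all of its computational ingredients check out: ${}^t\delta J_4\delta=-J_4$ and $\xi\in K$; $\Ad(\delta)\bigl(n(u_0,u_1,u_2,u_3)\bigr)=n(u_0,-u_1,-u_2,-u_3)$, so the degenerate characters with $c_3=0$ are $\Ad(\delta)$-invariant; the intertwining identity $R(y)\circ T=T\circ R(\Theta(y))$ with $\Theta=\Ad(\xi\delta)$ (using that $\delta$ and $\xi$ are commuting involutions); the parameter transfer $(\lambda_1,\lambda_2)\mapsto(-\lambda_2,-\lambda_1)$, which maps $\Xi_{II}$ bijectively onto $\Xi_{III}$; and the $K$-type matching $\tau_{\Lambda}^{*}\circ\Theta\cong\tau_{\Lambda'}^{*}$ with $\Lambda'=(-\lambda_2,-\lambda_1-1)$, which is indeed the Blattner parameter of the type III representation. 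Be aware, though, that the paper offers no proof to compare with: Lemma \ref{Whittaker-MVWinv} is quoted from Section 4 of a 2021 paper of the second author, and the text only remarks afterwards that the statement ``could be understood'' via the Moeglin--Vign\'eras--Waldspurger (or real Chevalley) involution. Your argument is a self-contained implementation of exactly that remark: conjugation by the similitude $\delta$ of factor $-1$ is the MVW involution forcing the passage to the contragredient, while the additional inner twist by $\xi\in K$ makes the transferred parameter dominant and aligns the minimal $K$-types and the value spaces. What your route buys is independence from the external citation; what it quietly relies on is the standard fact that twisting a discrete series by an automorphism of $G$ normalizing $K$ and preserving $\frakt$ again yields a discrete series, whose Harish-Chandra parameter is the dual transform of the original (here $(-\lambda_2,-\lambda_1)$). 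That is the one step you should state and justify explicitly --- e.g.\ via the transformation of the character on the compact Cartan subgroup, or of the minimal $K$-type together with the infinitesimal character --- since it is precisely the content the paper delegates to the reference.
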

    The above lemma could be understood in terms of the Moeglin-Vigner{\'a}-Waldspurger involution (cf.~\cite{1987_MVW},~\cite{2019_Prasad} et al.) or the real Chevalley involution (cf.~\cite{2014_Adams} et al.), which relates an irreducible admissible representation to its contragredient.
    
    The Whittaker functions are determined by their restriction to $A_0^\infty$ in view of their left $N_0$-equivalence and right $K$-equivalence. 
    We will explicitly write down the differential equations of the Whittaker functions restricted to $A_0^\infty$, also known as the restriction to the radial part.
    The results are stated as Theorems \ref{Deg-Whitt-Siegel} and \ref{Deg-Whitt-Borel}.
    We will give their proofs based on an article being prepared by Taku Ishii and the second named author.
    These deal with Whittaker functions on $G$ attached to degenerate characters of $N_0$ in a general setting.
    To be precise about the two theorems, we do some reduction of the argument for their proofs by Lemma \ref{Whittaker-MVWinv}, and Theorem \ref{Deg-Whitt-Borel} is what was modified by the first named author pointing out the missing ``$f_{0,i}$" in its statement.
    
\subsection*{(i)~The case of $c_0\not=0,~c_3=0$}
    
    To begin, we provide a system of differential equations characterizing the Whittaker functions for this case $c_0\not=0,~c_3=0$.
    To this end, we use the basis $\{u_i^*\mid 0\le i\le d_{\Lambda}\}$ of $V_{\Lambda}^*$~(cf.~Section \ref{RepMaxCpt}). 
    To know the Whittaker functions in detail for this case, this basis is more useful than the basis $\{v_i^*\mid 0\le i\le d_{\Lambda}\}$~(cf.~Section \ref{RepMaxCpt}).
    One reason is each $u_i^*$ is an eigenvector with respect to a maximal compact subgroup of $M_S$ isogenous to $\SO(2)$, as the transformation formula of $u_i^*$ in Section \ref{RepMaxCpt} (\ref{u-basis}) indicates.
    This property is necessary in the discussion of Section \ref{RepWhittFct} (i).
    
    \begin{prop}\label{DiffEq-deg1}
    For $\lambda\in\Xi_{II}$, let $\sum_{i=0}^{d_{\Lambda}}\varphi_i(g)u_i^*$ be a Whittaker function of the case $c_0\not=0,~c_3=0$ for $\calD_{\lambda}$ with minimal $K$-type $\tl$, where $\{u_i^*\mid 0\le i\le d_{\Lambda}\}$ denotes the basis of $V_{\Lambda}^*$ as above. 
    Let $a_0:=\diag(a_1,a_2,a_1^{-1},a_2^{-1})\in A_0^\infty$. Put $\partial_i:=a_i\frac{\partial}{\partial a_i}$ for $i=1,2$.
    \begin{align}
        \left(\partial_1-\partial_2-\Lambda_1+\Lambda_2+2i-4\pi c_0\frac{a_1}{a_2}\right)\varphi_i(a_0)
        -2\left(\Lambda_1+\Lambda_2\right)\varphi_{i+1}(a_0)\qquad&\notag\\
        +\left(\partial_1-\partial_2+\Lambda_1-\Lambda_2-2i-4+4\pi c_0\frac{a_1}{a_2}\right)\varphi_{i+2}(a_0)&=0,\nonumber\tag{$A_i$}\\
        (\partial_1+\partial_2-\Lambda_1+\Lambda_2-2)\varphi_{i+1}(a_0)&=0 \tag{$B_i$},
    \end{align}
    for $0\le i\le d_{\Lambda}-2$ and 
    \begin{align}
        &i\left(-\partial_1+\partial_2+4\pi c_0\frac{a_1}{a_2}+\Lambda_1-\Lambda_2-2i+2\right)\varphi_{i-1}(a_0) \tag{$C_i$}\\
        &\qquad+\left(\Lambda_1-\Lambda_2-2i\right)(\partial_1+\partial_2-\Lambda_1-\Lambda_2-2)\varphi_i(a_0) \nonumber\\
        &\qquad\qquad+(\Lambda_1-\Lambda_2-i)\left(\partial_1-\partial_2+4\pi c_0\frac{a_1}{a_2}+\Lambda_1-\Lambda_2-2i-2\right)\varphi_{i+1}(a_0)=0,\nonumber
    \end{align}
    for $0\le i\le d_{\Lambda}$. 
    \end{prop}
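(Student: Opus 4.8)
The plan is to start from the system in Lemma~\ref{Diffeq-basic}, which encodes the vanishing of the Dirac--Schmid operator on $\Phi=\sum_{i=0}^{d_\Lambda}\phi_i v_i^*$ through the right translations $R(X_\beta)$ by the non-compact root vectors, and to convert it into a system of ordinary differential equations on $A_0^\infty$. The reduction is driven by the two equivariances: the left $N_0$-equivariance turns the $\frakn$-part of each $X_\beta$ into a character value, while the right $K$-equivariance turns the $\frakk$-part into a zeroth-order operator via $\tl^*$. Concretely I would first substitute the Iwasawa decompositions of Lemma~\ref{Iwasawa-decomp}, writing $X_\beta=n_\beta+a_\beta+k_\beta$ with $n_\beta\in\frakn$, $a_\beta\in\fraka$, $k_\beta\in\frakk$. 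On the restriction to $a_0=\diag(a_1,a_2,a_1^{-1},a_2^{-1})$ the abelian parts $H_1,H_2$ act as $\partial_1,\partial_2$. For the degenerate character of case~(i) we have $c_1=c_2=c_3=0$, so among the vectors $E_{2e_1},E_{2e_2},E_{e_1+e_2},E_{e_1-e_2}$ spanning $\frakn$ only $E_{e_1-e_2}$ survives, contributing the scalar $2\pi\sqrt{-1}\,c_0\,(a_1/a_2)$ after conjugating $a_0$ past $\exp(tE_{e_1-e_2})$; this is the origin of every $c_0\,a_1/a_2$ term.

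The compact parts are then handled by the identities $K_{11}=\tfrac12(Z+H)$, $K_{22}=\tfrac12(Z-H)$, $K_{12}=X$, $K_{21}=\overline{X}$, together with $R(k_\beta)\Phi=\tl^*(k_\beta)\Phi$, the sign here being fixed by the contragredient convention of \cite{1994_Oda} that underlies Lemma~\ref{Diffeq-basic}, and the explicit actions of $Z,H,X,\overline{X}$ on $\{v_i^*\}$ recorded in Section~\ref{RepMaxCpt}. After this step each of the three families of Lemma~\ref{Diffeq-basic} becomes a first-order relation among $\phi_i,\phi_{i+1},\phi_{i+2}$ whose coefficients are polynomials in $\Lambda_1,\Lambda_2,i$ plus the single transcendental coefficient $a_1/a_2$ carried by the character.

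Next I would change basis to $\{u_i^*\}$, whose merit, as noted before the Proposition, is that it diagonalizes $\tl^*(Z')=\tl^*(Y)$ with $Y=X-\overline{X}$, i.e.\ the Lie algebra of the maximal compact $\SO(2)$ of $M_S$; concretely $\{u_i^*\}$ is the binomial transform of $\{v_i^*\}$ coming from $u_i=(x_1+\sqrt{-1}\,x_2)^i(x_1-\sqrt{-1}\,x_2)^{d_\Lambda-i}$. Since $\partial_1,\partial_2$ and the constants commute with this (constant-coefficient) change of basis, the sum of the first two families collapses to the clean relation $(\partial_1+\partial_2-\Lambda_1+\Lambda_2-2)(\phi_i+\phi_{i+2})=0$, in which the $c_0$-term cancels; under the change of basis this passes to the single-term relation $(B_i)$ on $\varphi_{i+1}$. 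The difference of the first two families and the third family then produce $(A_i)$ and $(C_i)$ respectively, and here the factor $\sqrt{-1}$ present in the $v^*$-stage $c_0$-terms is absorbed by the $\sqrt{-1}$'s in the transform $v_i^*\leftrightarrow u_i^*$, which explains both why $(A_i)$ and $(C_i)$ carry the \emph{real} coefficient $4\pi c_0\,a_1/a_2$ and why one character term in the $v^*$-picture is redistributed over two neighbouring $\varphi$'s.

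The hard part will be this last step. The transform is not diagonal, so the $\frakk$-eigenvalue data that were diagonal in $\{v_i^*\}$ (the $H$-weights) become off-diagonal in $\{u_i^*\}$ and conversely, and one must track the resulting $\mathfrak{sl}_2$-ladder coefficients precisely in order to recover the exact polynomial factors $i$, $\Lambda_1-\Lambda_2-2i$, $\Lambda_1-\Lambda_2-i$ and the integer shifts $\pm2,-4$ appearing in $(A_i)$--$(C_i)$. A secondary but genuine subtlety is fixing once and for all the contragredient sign in $R(k_\beta)\Phi=\tl^*(k_\beta)\Phi$: an incorrect choice flips the constant $-\Lambda_1+\Lambda_2-2$ in $(B_i)$, and the analogous constants in $(A_i)$ and $(C_i)$, from their true values. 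This sign can be pinned down by matching a single low-rank instance, for example $d_\Lambda=0$, where $(C_0)$ degenerates to a scalar equation, against the classical $\SL_2$ Whittaker function sitting inside $M_S$.
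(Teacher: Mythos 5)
Your proposal follows essentially the same route as the paper's proof: first convert Lemma \ref{Diffeq-basic} into radial equations in the $v_i^*$-basis via the Iwasawa decomposition of Lemma \ref{Iwasawa-decomp} and the two equivariances (with only $E_{e_1-e_2}$ contributing a character term since $c_3=0$), then pass to the $u_i^*$-basis, where the sum of the first two families (in which the $c_0$-terms indeed cancel) yields $(B_i)$, the difference yields $(A_i)$, and the third family yields $(C_i)$ --- exactly the combinations the paper uses, with your ``hard part'' being precisely what the paper supplies as the explicit transform identities of Lemma \ref{Beta-lemma}. One small caution: under the paper's convention $\Phi(gk)=\tl^*(k)^{-1}\Phi(g)$ the compact part acts as $R(k_\beta)\Phi=-\tl^*(k_\beta)\Phi$ rather than with a plus sign, but you correctly flag this sign as a point to be pinned down, so this does not affect the soundness of the plan.
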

    
    \begin{proof}
    We can write the differential equations characterizing Whittaker functions $\sum_{i=0}^{d_{\Lambda}}\phi_i(g)v_i^*$ with respect to $\{v_i^*\mid 0\le i\le d_\Lambda\}$ by plugging the Iwasawa decomposition of non-compact root vectors~(cf.~Lemma \ref{Iwasawa-decomp}) into the formulas in Lemma \ref{Diffeq-basic} and by the left and right equivariance with respect to $\psi$ and $\tl^*$. Here, regarding the right $\tl^*$-equivariance, note the formula for $\tl^*$ around the end of Section \ref{RepMaxCpt}. These are given as follows:
    \begin{align}
    (\partial_1+\Lambda_2-i-2)\phi_i(g)-4\pi\sqrt{-1}\,c_0\frac{a_1}{a_2}\phi_{i+1}(g)+(\partial_2+\Lambda_1-i-2)\phi_{i+2}(g)&=0,\tag{$A'_i$}\\
    (\partial_2-\Lambda_1+i)\phi_i(g)+4\pi\sqrt{-1}\,c_0\frac{a_1}{a_2}\phi_{i+1}(g)+(\partial_1-2\Lambda_1+\Lambda_2+i)\phi_{i+2}(g)&=0, \tag{$B'_i$}\\
    j(\partial_2-\Lambda_1+j-1)\phi_{j-1}(g)-2\pi\sqrt{-1}\,c_0(\Lambda_1-\Lambda_2-2j)\frac{a_1}{a_2}\phi_{j}(g)\qquad& \tag{$C'_j$}\\
    -(\Lambda_1-\Lambda_2-j)(\partial_1-\Lambda_1+j-1)\phi_{i+1}(g)&=0.\nonumber
    \end{align}
    Here, $i$ runs over $0\le i\le d_{\Lambda}-2$ and $j$ runs over $0\le j\le d_{\Lambda}$.
    We rewrite these differential equations in terms of the basis $\{u_i^*\mid 0\le i\le d_{\Lambda}\}$ of $V_{\Lambda}^*$. 
    To this end we use the following lemma:
    
    \begin{lem}\label{Beta-lemma}
    Let $n$ be a positive integer. For $1\le i,j\le n$ we define $\beta_{ij}^n\in\C$ by 
    \begin{equation}\label{definition-beta}
    (x_1+\sqrt{-1}\,x_2)^i(x_1-\sqrt{-1}\,x_2)^{n-i}=\sum_{j=0}^n\beta_{ij}^nx_1^jx_2^{n-j}.
    \end{equation}
    We put $h_i:=\varphi_i(g)$ and $f_j:=\phi_i(g)$, where $\varphi_i$ and $\phi_i$ denote the coefficient functions of a Whittaker function $\Phi$ with respect to the basis $\{u^*_i\mid 0\le i\le d_{\Lambda}\}$ and $\{v^*_j\mid 0\le j\le d_{\Lambda}\}$ of $V_{d_{\Lambda}}$, respectively.
    We then have $h_i=\sum_{j=0}^n\beta_{ij}^nf_j$ and the following formula:
    \begin{enumerate}
    \item 
    $
    \sum_{j=0}^{d_{\Lambda}}\beta_{ij}^{d_{\Lambda}} (j(j-1)f_{j-1}+(d_{\Lambda}-j)(d_{\Lambda}-j-1)f_{j+1})
    =
    \sqrt{-1}\,i(d_{\Lambda}-i)(h_{i-1}-h_{i+1})
    $,
    \item 
    $
        \sum_{j=0}^{d_{\Lambda}}j\beta_{ij}^{d_{\Lambda}}f_{j-1}
    =
    \frac{\sqrt{-1}}{2}(ih_{i-1}+(d_{\Lambda}-i)h_{i}-(d_{\Lambda}-2i)h_{i+1})
    $,
    \item $\sum_{j=0}^{d_{\Lambda}}\beta_{ij}^{d_{\Lambda}}(d_{\Lambda}-2j)f_j=-(d_{\Lambda}-i)h_{i+1}-ih_{i-1}
    $,
    \item $\sum_{j=0}^{d_{\Lambda}}(d_{\Lambda}-j)\beta_{ij}^{d_{\Lambda}}f_{j+1}=\frac{\sqrt{-1}}{2}(ih_{i-1}-(d_{\Lambda}-2i)h_i-(d_{\Lambda}-i)h_{i+1})$,
    \item $\sum_{j=0}^{d_{\Lambda}-2}\beta_{ij}^{d_{\Lambda}-2}f_j=-\frac{1}{4}(h_i+h_{i+2})+\frac{1}{2}h_{i+1},~\sum_{j=0}^{d_{\Lambda}-2}\beta_{ij}^{d_{\Lambda}-2}f_{j+2}=\frac{1}{4}(h_i+h_{i+2})+\frac{1}{2}h_{i+1}$,
    \item $\sum_{j=0}^{d_{\Lambda}-2}\beta_{ij}^{d_{\Lambda}-2}f_{j+1}=\frac{1}{4\sqrt{-1}}(h_{i+2}-h_i)$,
    \item 
    $
    \sum_{j=0}^{d_{\Lambda}-2}\beta_{ij}^{d_{\Lambda}-2}
    (-jf_j+(d_{\Lambda}-j-2)f_{j+2})
    =
    \frac{1}{4}(-d_{\Lambda}+2i+2)(h_{i+2}-h_{i})
    $.
    \end{enumerate}
    \end{lem}
    
    \begin{proof}
    All the formulas are verified by direct calculations. 
    Regarding the first four, it is useful to consider the infinitesimal action of the differential operators  $x_1x_2(\frac{\partial^2}{\partial x_1^2}+\frac{\partial^2}{\partial x_2^2}),~x_2\frac{\partial}{\partial x_1},~x_2\frac{\partial}{\partial x_1}-x_1\frac{\partial}{\partial x_2}$ and $x_1\frac{\partial}{\partial x_2}$ on both sides of the equation $(\ref{definition-beta})$. 
    The four formulas just mentioned are deduced from the four equations thus obtained.
    \end{proof}
    
    The formula $(A_i)$ is verified by applying the fifth to seventh formulas in Lemma \ref{Beta-lemma} to
    \[
    \sum_{j=0}^{d_{\Lambda}-2}\beta_{ij}^{d_{\Lambda}-2}((A'_j)-(B'_j)).
    \]
    The formula $(B_i)$ is also verified by applying the formulas from fifth in Lemma \ref{Beta-lemma} to 
    \[
    \sum_{j=0}^{d_{\Lambda}-2}\beta_{ij}^{d_{\Lambda}-2}((A'_j)+(B'_j)).
    \]
    In fact, to verify the formula $(A_i)$, it is helpful to note that $(A'_j)-(B'_j)$ is equivalent to
    \begin{align*}
        &(\partial_1-\partial_2-2)(\phi_j(g)-\phi_{j+2}(g))-8\pi\sqrt{-1}c_0\frac{a_1}{a_2}\phi_{j+1}(g)\\
        &\qquad+(\Lambda_1+\Lambda_2)(\phi_j(g)+\phi_{j+2}(g))+2\{-j\phi_j(g)+(d_{\Lambda}-j-2)\phi_{j+2}(g)\}=0.
    \end{align*}
    The formula $(C_i)$ is obtained by applying the first four formulas in Lemma \ref{Beta-lemma} to $\sum_{j=0}^{d_{\Lambda}}\beta_{ij}^{d_{\Lambda}}(C'_j)$.
    \end{proof}
    
    We further need the confluent hypergeometric function $W_{\kappa, \mu}(y)$ characterized by the following differential equation:
    \[
    \frac{d^2}{dy^2}W_{\mu,\kappa}(y)
    +
    \left(
    -\frac{1}{4}+\frac{\kappa}{y}+\frac{1/4-\mu^2}{y^2}\right)
    W_{\kappa,\mu}(y)=0.
    \]
    If there is no fear of confusion, this can be called a Whittaker function, which is often done.
    For later use we give a well-known formula on $W_{\kappa,\mu}$ as
    follows~\cite[Section 3.6]{2011_Goldfeld_Hundley}:
    \begin{lem}\label{Whittaker-formula}
    \begin{align}
    \left(y\frac{d}{dy}-\frac{1}{2}y+\kappa\right)W_{\kappa,\mu}(y)&=-W_{\kappa+1,\mu}(y) \label{f1},\\
    \left(y\frac{d}{dy}+\frac{1}{2}y-\kappa\right)W_{\kappa,\mu}(y)&=-\left(\mu^2-\left(\kappa-\frac{1}{2}\right)^2\right)W_{\kappa-1,\mu}(y) \label{f2},\\
    W_{\mu+\frac{1}{2},\mu}(y)&=y^{\mu+\frac{1}{2}}\exp\left(-\frac{y}{2}\right),\label{f3}\\
    W_{\kappa,\mu}(y)&=W_{\kappa,-\mu}(y) \label{f4}.
    \end{align}
    \end{lem}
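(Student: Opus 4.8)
The plan is to derive all four identities directly from the defining Whittaker equation
\[
\frac{d^2}{dy^2}W_{\kappa,\mu}(y)+\left(-\frac14+\frac{\kappa}{y}+\frac{1/4-\mu^2}{y^2}\right)W_{\kappa,\mu}(y)=0,
\]
together with the characterization of $W_{\kappa,\mu}$ as the unique (up to scalar) recessive solution, normalized by $W_{\kappa,\mu}(y)\sim y^{\kappa}e^{-y/2}$ as $y\to+\infty$. Since the equation has an irregular singular point at infinity, whose two formal solutions behave like $y^{\pm\kappa}e^{\mp y/2}$, any solution decaying as $y\to+\infty$ must be a scalar multiple of $W_{\kappa,\mu}$; this uniqueness is the device I will use repeatedly to pin down both the identities and their constants.

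I would dispatch \eqref{f3} and \eqref{f4} first, as they require no recursion. For \eqref{f3}, substitute $\kappa=\mu+\tfrac12$ and the candidate $y^{\mu+1/2}e^{-y/2}$ into the Whittaker equation; a short computation shows that the coefficients of $y^{\mu-3/2}e^{-y/2}$, $y^{\mu-1/2}e^{-y/2}$ and $y^{\mu+1/2}e^{-y/2}$ each cancel, so the candidate solves the equation, and since its leading behavior is exactly $y^{\kappa}e^{-y/2}$ it equals $W_{\mu+1/2,\mu}$ on the nose. For \eqref{f4}, observe that the coefficient $1/4-\mu^2$ is invariant under $\mu\mapsto-\mu$, so $W_{\kappa,-\mu}$ solves the same equation as $W_{\kappa,\mu}$ and shares the normalization $y^{\kappa}e^{-y/2}$; uniqueness of the recessive solution forces the two to coincide.

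For the contiguity relations \eqref{f1} and \eqref{f2}, I would introduce the first-order ladder operators $R_\kappa:=y\frac{d}{dy}-\frac12 y+\kappa$ and $S_\kappa:=y\frac{d}{dy}+\frac12 y-\kappa$ appearing on their left-hand sides. The first step is a direct operator computation---eliminating $W''$ by means of the Whittaker equation---showing that $R_\kappa$ carries solutions with parameter $\kappa$ to solutions with parameter $\kappa+1$, while $S_\kappa$ carries them to solutions with parameter $\kappa-1$. Hence $R_\kappa W_{\kappa,\mu}$ and $S_\kappa W_{\kappa,\mu}$ are scalar multiples of $W_{\kappa+1,\mu}$ and $W_{\kappa-1,\mu}$ respectively, and it remains only to identify the scalars. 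For \eqref{f1} this is immediate from the leading asymptotics: applying $R_\kappa$ to $y^{\kappa}e^{-y/2}$ produces $-y^{\kappa+1}e^{-y/2}$ to leading order, matching $-W_{\kappa+1,\mu}$ and fixing the constant $-1$.

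The delicate point, and the step I expect to be the main obstacle, is the constant in \eqref{f2}: here the two leading terms cancel when $S_\kappa$ is applied to $y^{\kappa}e^{-y/2}$, so the leading asymptotics alone do not fix the scalar. Rather than chase the subleading term of the asymptotic expansion, I would resolve this algebraically: compose the raising and lowering operators and use the Whittaker equation to reduce the second-order operator $R_{\kappa-1}S_\kappa$, acting on a solution with parameter $\kappa$, to multiplication by a scalar $s$. Combined with \eqref{f1} in the form $R_{\kappa-1}W_{\kappa-1,\mu}=-W_{\kappa,\mu}$, this forces $S_\kappa W_{\kappa,\mu}=-s\,W_{\kappa-1,\mu}$, and the reduction yields $s=\mu^2-(\kappa-\tfrac12)^2$, giving $S_\kappa W_{\kappa,\mu}=-\bigl(\mu^2-(\kappa-\tfrac12)^2\bigr)W_{\kappa-1,\mu}$, which is \eqref{f2}. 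The only genuine work is the bookkeeping in this operator reduction; everything else is either a direct substitution or an appeal to the uniqueness of the recessive solution.
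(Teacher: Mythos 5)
Your proof is correct, but it necessarily takes a different route from the paper, because the paper does not prove Lemma \ref{Whittaker-formula} at all: the four identities are quoted as classical facts with the citation \cite[Section 3.6]{2011_Goldfeld_Hundley}, the only clarification being that $W_{\kappa,\mu}$ denotes the moderate-growth solution of the displayed differential equation. Your self-contained derivation is sound. The ladder operators $R_\kappa=y\frac{d}{dy}-\frac12 y+\kappa$ and $S_\kappa=y\frac{d}{dy}+\frac12 y-\kappa$ do send solutions of the parameter-$\kappa$ equation to solutions with parameters $\kappa+1$ and $\kappa-1$ (eliminating the second derivative via the equation makes all remaining coefficients cancel), the leading asymptotics $y^{\kappa}e^{-y/2}$ fix the constant $-1$ in \eqref{f1}, and your composition trick for \eqref{f2} is exactly right: for \emph{every} solution $W$ of the parameter-$\kappa$ equation one computes $R_{\kappa-1}S_\kappa W=\bigl(\mu^2-(\kappa-\frac12)^2\bigr)W$, and combining this with \eqref{f1} applied at $\kappa-1$ forces the scalar in \eqref{f2}, cleanly avoiding the leading-term cancellation that makes a purely asymptotic identification fail there. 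Two refinements would make the write-up airtight. First, \eqref{f1}--\eqref{f3} are normalization-dependent, and the paper's phrase ``solution of moderate growth'' determines $W_{\kappa,\mu}$ only up to a scalar for each pair $(\kappa,\mu)$; your standing normalization $W_{\kappa,\mu}(y)\sim y^{\kappa}e^{-y/2}$ as $y\to+\infty$ is the standard one and must be stated as part of the definition, not merely assumed. Second, when you invoke uniqueness of the recessive solution to conclude that $R_\kappa W_{\kappa,\mu}$ and $S_\kappa W_{\kappa,\mu}$ are multiples of $W_{\kappa+1,\mu}$ and $W_{\kappa-1,\mu}$, you need these images to decay at infinity; this is true because the asymptotic expansion at the irregular singular point may be differentiated term by term, so both images are $O\bigl(y^{\kappa+1}e^{-y/2}\bigr)$, but it deserves an explicit sentence since the uniqueness statement applies only to decaying solutions. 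As for what each approach buys: the paper's citation is the economical, standard choice for classical special-function identities; your argument makes the lemma independent of the reference, exposes the normalization conventions the paper leaves implicit, and the operator-composition device extends to other contiguity relations with no asymptotic bookkeeping.
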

    Here, $W_{\kappa,\mu}$ means the solution of moderate growth for this differential equation.
    
    \begin{thm}[Ishii-Narita]\label{Deg-Whitt-Siegel}
    Suppose that $\psi$ satisfies $c_0\not=0,~c_3=0$.
    For a large discrete series representation $\calD_{\lambda}$ with Harish-Chandra parameter $\lambda$, we keep the notation $\sum_{i=0}^{d_{\Lambda}}\varphi_i(g)u_i^*$ of a Whittaker function for $\calD_{\lambda}$ with minimal $K$-type $\tl$, where $\{u_i^*\mid 0\le i\le d_{\Lambda}\}$ denotes the basis of $V_{\Lambda}^*$ given in Section \ref{RepMaxCpt}.
    We assume that this function is of moderate growth.
    \begin{enumerate}
    \item Let $\lambda\in\Xi_{II}$.  With arbitrary constants $C_0$ and $C_1$ independent of $i$, the restriction of coefficient functions $\varphi_i$ to $A_0^\infty$ is explicitly given as 
    \[
    C_0\alpha_i(a_1a_2)^{\frac{d_{\Lambda}+2}{2}}W_{{\rm sgn}(c_0)\left(i-\frac{d_{\Lambda}}{2}\right),\frac{\Lambda_1+\Lambda_2-1}{2}}\left(4\pi|c_0|\frac{a_1}{a_2}\right)+C_1\beta_ia_1^{\Lambda_1+1}a_2^{\Lambda_2+1}\exp\left(-2\pi|c_0|\frac{a_1}{a_2}\right),
    \]
    where ${\rm sgn}(c_0)$ denotes the signature of $c_0$. 
    Here,
    \[
    \alpha_i:=
    \begin{cases}
    0&(0\le i\le \Lambda_1-1)\\
    \frac{1}{(i-\Lambda_1)!}&(\Lambda_1\le i\le d_{\Lambda})
    \end{cases},\quad
    \beta_i:=\delta_{i,d_{\Lambda}}~(0\le i\le d_{\Lambda})
    \]
    when $c_0>0$, and
    \[
    \alpha_i:=
    \begin{cases}
    \frac{1}{(-\Lambda_2-i)!}&(0\le i\le -\Lambda_2)\\
    0&(-\Lambda_2+1\le i\le d_{\Lambda})
    \end{cases},\quad
    \beta_i:=\delta_{i,0}~(0\le i\le d_{\Lambda})
    \]
    when $c_0<0$.
    \item Let $\lambda\in\Xi_{III}$.
    The restriction of coefficient functions $\varphi_i$ to $A_0^\infty$ is explicitly given as 
    \[
    C_0\alpha_i(a_1a_2)^{\frac{d_{\Lambda}+2}{2}}W_{{\rm sgn}(c_0)\left(i-\frac{d_{\Lambda}}{2}\right),\frac{\Lambda_1+\Lambda_2+1}{2}}\left(4\pi|c_0|\frac{a_1}{a_2}\right)+C_1\beta_ia_1^{-\Lambda_2+1}a_2^{-\Lambda_1+1}\exp\left(-2\pi|c_0|\frac{a_1}{a_2}\right)
    \]
    with arbitrary constants $C_0$ and $C_1$ independent of $i$, 
    where
    \[
    \alpha_i:=
    \begin{cases}
    \frac{(-1)^i}{(i+\Lambda_2)!}&(0\le i\le -\Lambda_2)\\
    0&(-\Lambda_2+1\le i\le d_{\Lambda})
    \end{cases},\quad
    \beta_i:=(-1)^i\delta_{i,d_{\Lambda}}~(0\le i\le d_{\Lambda})
    \]
    when $c_0>0$, and
    \[
    \alpha_i:=
    \begin{cases}
    \frac{(-1)^i}{(\Lambda_1-i)!}&(0\le i\le \Lambda_1)\\
    0&(\Lambda_1+1\le i\le d_{\Lambda})
    \end{cases},\quad
    \beta_i:=(-1)^i\delta_{i,0}~(0\le i\le d_{\Lambda})
    \]
    when $c_0<0$.
    \end{enumerate}
    \end{thm}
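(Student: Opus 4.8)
The plan is to integrate the system $(A_i)$, $(B_i)$, $(C_i)$ of Proposition~\ref{DiffEq-deg1} by separating the two natural radial variables on $A_0^\infty$ and reducing each coefficient to a solution of the confluent hypergeometric equation. First I would pass to the coordinates $s:=a_1a_2$ and $t:=a_1/a_2$, under which $\partial_1+\partial_2=2s\,\partial/\partial s$ and $\partial_1-\partial_2=2t\,\partial/\partial t$, while the perturbing term $4\pi c_0\,a_1/a_2$ becomes $4\pi c_0 t$. For the interior indices $1\le i+1\le d_{\Lambda}-1$ the equation $(B_i)$ then reads $(2s\,\partial/\partial s-d_{\Lambda}-2)\varphi_{i+1}=0$, which forces the separated form $\varphi_{i+1}(a_0)=s^{(d_{\Lambda}+2)/2}g_{i+1}(t)$ and already explains the factor $(a_1a_2)^{(d_{\Lambda}+2)/2}$ in the first term of the asserted formula.

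Next I would substitute this separated form into $(C_i)$, turning it into a three-term recurrence in $t$ among $g_{i-1},g_i,g_{i+1}$, and use $(A_i)$ to eliminate the neighbouring coefficients; this should produce a single second-order ODE in the variable $y:=4\pi|c_0|t$ for each $g_i$. Comparing with the defining equation of $W_{\kappa,\mu}$, I expect the parameters to emerge as $\mu=(\Lambda_1+\Lambda_2-1)/2$ and $\kappa_i=\mathrm{sgn}(c_0)\bigl(i-\tfrac{d_{\Lambda}}{2}\bigr)$, so that $g_i$ is proportional to $W_{\kappa_i,\mu}(y)$; the moderate-growth hypothesis is precisely what discards the exponentially increasing solution and singles out $W_{\kappa,\mu}$.

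The relative constants $\alpha_i$ and the truncation ranges should then fall out of the contiguity formulas \eqref{f1} and \eqref{f2} of Lemma~\ref{Whittaker-formula}. Since $\kappa_i$ increases by $\mathrm{sgn}(c_0)$ under $i\mapsto i+1$, the raising relation \eqref{f1} identifies consecutive $g_i$ up to scalars and the step ratios multiply to the factorial $1/(i-\Lambda_1)!$; the lowering relation \eqref{f2} carries the factor $\mu^2-(\kappa-\tfrac12)^2=(\Lambda_1-i)(i+\Lambda_2-1)$, which vanishes at $i=\Lambda_1$, decoupling the chain and forcing $\alpha_i=0$ for $i\le\Lambda_1-1$ when $c_0>0$; the case $c_0<0$ reverses the two ends and produces the second table. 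Finally, the boundary term $C_1\beta_i a_1^{\Lambda_1+1}a_2^{\Lambda_2+1}\exp(-2\pi|c_0|a_1/a_2)$ lives at the single endpoint coefficient ($i=d_{\Lambda}$ for $c_0>0$, $i=0$ for $c_0<0$) that is not constrained by any $(B_i)$: there a second separated form with a different $s$-power is admissible, whose $t$-part is the elementary solution $y^{d_{\Lambda}/2}\exp(-y/2)$ supplied by \eqref{f3}.

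For $\lambda\in\Xi_{III}$ I would deduce the formula from the type II case through Lemma~\ref{Whittaker-MVWinv}: the substitution $g\mapsto\delta g\delta^{-1}\xi$ transports Whittaker functions for $\Xi_{III}$ to those for $\Xi_{II}$, and tracking its effect on $a_1,a_2$ and on the basis $\{u_i^*\}$ converts the first formula into the second, with $\mu$ shifted to $(\Lambda_1+\Lambda_2+1)/2$ and the alternating signs $(-1)^i$ arising from the action of $\xi$. The main obstacle I anticipate is the middle step: isolating a clean second-order ODE from the coupled first-order system without spurious terms, and then matching the contiguity relations precisely enough to pin down both the factorials in $\alpha_i$ and the exact index of truncation, while keeping the two sign-cases of $c_0$ and the endpoint solution mutually consistent.
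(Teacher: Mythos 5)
Your plan follows essentially the same route as the paper's proof: reduce type III to type II via Lemma \ref{Whittaker-MVWinv}, use $(B_i)$ in the coordinates $(a_1a_2,\,a_1/a_2)$ to separate variables and fix the power of $a_1a_2$, reduce the coupled system to the confluent hypergeometric equation (the paper's equations $(F_i)$) with exactly your parameters $\kappa_i={\rm sgn}(c_0)(i-\tfrac{d_{\Lambda}}{2})$, $\mu=\tfrac{\Lambda_1+\Lambda_2-1}{2}$, determine the constants via the contiguity relations \eqref{f1}--\eqref{f2} (yielding the recurrence $C_i=(i+1-\Lambda_1)C_{i+1}$, hence the factorials and the truncation at $i=\Lambda_1$), and treat the unconstrained endpoint coefficients $\varphi_0,\varphi_{d_{\Lambda}}$ separately, where the extra exponential solution survives at one end and is excluded by moderate growth at the other. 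The ``main obstacle'' you anticipate is resolved in the paper by forming the explicit auxiliary combinations $(D_i)=i(A_{i-1})+2i(B_{i-1})+(C_i)$ and $(E_i)=(A_i)-(D_{i+1})$, which deliver both the second-order equation and the first-order links between consecutive coefficients.
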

    
    \begin{proof}
    In view of Lemma \ref{Whittaker-MVWinv}, it suffices to consider the case of $\lambda\in\Xi_{II}$. 
    Let us now solve the system of the differential equations in Proposition \ref{DiffEq-deg1}. 
    The calculation of $i(A_{i-1})+2i(B_{i-1})+(C_i)$ leads to
    \begin{align}
        (\partial_1+\partial_2-\Lambda_1-\Lambda_2-2i-2)\varphi_i(a_0)
        +\left(\partial_1-\partial_2+4\pi c_0\frac{a_1}{a_2}+\Lambda_1-\Lambda_2-2i-2\right)\varphi_{i+1}(a_0)=0 \tag{$D_i$}
    \end{align}
    for any $0\le i\le d_{\Lambda}-1$.
    The differential equations $(C_i)$ can be thus replaced by $(D_i)$ together with $(C_{d_{\Lambda}})$. 
    Let us consider $(A_i)-(D_{i+1})$, which is
    \begin{align}
        \left(\partial_1-\partial_2+\Lambda_1-\Lambda_2+2i-4\pi c_0\frac{a_1}{a_2}\right)\varphi_i(a_0)-(\partial_1+\partial_2++\Lambda_1+\Lambda_2-2i-4)\varphi_{i+1}(a_0)=0\tag{$E_i$}
    \end{align}
    for any $0\le i\le d_{\Lambda}-2$.
    By calculating 
    \[
    \left(\partial_1-\partial_2-\Lambda_1+\Lambda_2+2i-4\pi c_0\frac{a_1}{a_2}\right)(D_i)+\left(\partial_1+\partial_2-\Lambda_1-\Lambda_2-2i-2\right)(E_i)
    \]
    and taking $(B_i)$ into account, we obtain
    \begin{equation}
        \left((\partial_1-2)^2+(\partial_2-1)^2-(\Lambda_1-1)^2-\Lambda_2^2+4(-\Lambda_1+\Lambda_2+2i)\pi c_0\frac{a_1}{a_2}-8\left(\pi c_0\frac{a_1}{a_2}\right)^2\right)\varphi_i(a_0)=0 \tag{$F_i$}
    \end{equation}
    for $1\le i\le d_{\Lambda}-1$.
    By
    \[
    \left(\partial_1+\partial_2+\Lambda_1+\Lambda_2+2i-4\right)(D_i)
    +\left(\partial_1-\partial_2-4\pi c_0\frac{a_1}{a_2}+\Lambda_1-\Lambda_2-2i-2\right)(E_i),
    \]
    we see that $(F_i)$ holds also for $i=0$.
    
    We now carry out the change of variables $(y_1,y_2):=\left(4\pi c_0\frac{a_1}{a_2},a_1a_2\right)$.
    The differentials $(\partial_1,\partial_2)$ are then changed into $(\partial_{y_1}+\partial_{y_2},-\partial_{y_1}+\partial_{y_2})$ with $\partial_{y_i}:=y_i\frac{\partial}{\partial y_i}$ for $i=1,2$. 
    From $(F_i)$ and $(B_i)$, we deduce
    \[
    \varphi_i(a_0)
    =
    C_iy_2^{\frac{\Lambda_1-\Lambda_2+2}{2}}W_{{\rm sgn}(c_0)\left(i-\frac{d_{\Lambda}}{2}\right),\frac{\Lambda_1+\Lambda_2-1}{2}}(y_1)\quad(1\le i\le d_{\Lambda}-1).
    \]
    From $(D_i)$ and $(C_{d_{\Lambda}})$, for $0\le i\le d_{\Lambda}-1$, we deduce
    \begin{align*}
        \left(\partial_{y_2}-\frac{\Lambda_1+\Lambda_2}{2}-i-1\right)\varphi_i(a_0)+\left(\partial_{y_1}+\frac{1}{2}{\rm sgn}(c_0)y_1+\frac{\Lambda_1-\Lambda_2}{2}-i-1\right)\varphi_{i+1}(a_0)
        &=0,\\
        \left(\partial_{y_1}-\frac{1}{2}{\rm sgn}(c_0)y_1+\frac{\Lambda_1-\Lambda_2-2}{2}\right)\varphi_{d_{\Lambda}-1}(a_0)+\left(\partial_{y_2}-\frac{\Lambda_1+\Lambda_2}{2}-1\right)\varphi_{d_{\Lambda}}(a_0)
        &=0.
    \end{align*}
    
    To go further, we divide the situation into two cases $c_0>0$ and $c_0<0$. 
    We discuss only the first case here since the argument of the second one is similar. 
    
    We now suppose that $c_0>0$.
    In view of (\ref{f1}) and (\ref{f2}), for $1\le i\le d_{\Lambda}-2$, we obtain
    \begin{align*}
        C_i-(i+1-\Lambda_1)C_{i+1}
        &=0,\\
        \left(\partial_{y_2}-\frac{\Lambda_1+\Lambda_2}{2}-1\right)\varphi_0(a_0)-C_1\Lambda_2(\Lambda_1-1)y_2^{\frac{\Lambda_1-\Lambda_2+2}{2}}W_{-\frac{\Lambda_1-\Lambda_2}{2},\frac{\Lambda_1+\Lambda_1-1}{2}}(y_1)
        &=0,\\
        (-\Lambda_1+1)C_{d_{\Lambda}-1}y_2^{\frac{d_{\Lambda}+2}{2}}W_{\frac{\Lambda_1-\Lambda_2-2}{2}}(y_1)
        +\left(\partial_{y_1}+\frac{1}{2}y_1-\frac{\Lambda_1-\Lambda_2}{2}\right)\varphi_{d_{\Lambda}}(a_0)
        &=0,\\
        -C_{d_{\Lambda}-1}y_2^{\frac{\Lambda_1-\Lambda_2+2}{2}}W_{\frac{\Lambda_1-\Lambda_2}{2},\frac{\Lambda_1+\Lambda_2-1}{2}}(y_1)+\left(\partial_{y_2}-\frac{\Lambda_1+\Lambda_2}{2}-1\right)\varphi_{d_{\Lambda}}(a_0)
        &=0
    \end{align*}
    from $(E_i)$ and $(C_{d_{\Lambda}})$.
    From the second equation above and $(E_0)$, we get
    \[
    \varphi_0(a_0)=C_0y_2^{\frac{d_{\Lambda}+2}{2}}W_{-\frac{d_{\Lambda}}{2},\frac{\Lambda_1+\Lambda_2-1}{2}}(y_1).
    \]
    For this, we note that, although we encounter another candidate $y_1^{\frac{\Lambda_1-\Lambda_2}{2}}y_2^{\frac{\Lambda_1+\Lambda_2+2}{2}}e^{\frac{1}{2}y_1}$ of the solution in the course of the calculation, we can exclude this by the moderate growth condition on $\varphi_i$s. 
    From the third and fourth equations above we deduce
    \[
    \varphi_{d_{\Lambda}}(a_0)=C'_{d_{\Lambda}}y_1^{\frac{\Lambda_1-\Lambda_2}{2}}y_2^{\frac{\Lambda_1+\Lambda_2}{2}+1}e^{-\frac{1}{2}y_1}-\frac{C_{d_{\Lambda}-1}}{\Lambda_2}y_2^{\frac{d_{\Lambda}+2}{2}}W_{\frac{d_{\Lambda}}{2},\frac{\Lambda_1+\Lambda_2-1}{2}}(y_1)
    \]
    with another arbitrary constant $C'_{d_{\Lambda}}$ dependent only on $d_{\Lambda}$. 
    The recurrence relation of the constants $C_i$ in the 1st formula above implies
    \[
    C_{i}=
    \begin{cases}
    0&(0\le i\le \Lambda_1-1)\\
    \frac{(-\Lambda_2)!}{(i-\Lambda_1)!}C_{d_{\Lambda}}&(\Lambda_1\le i\le d_{\Lambda})
    \end{cases}.
    \]
    As a result, we are able to summarize the argument so far to obtain the result in the assertion.
    \end{proof}

\subsection*{(ii)~The case of $c_0=c_3=0$}

    \begin{thm}[Horinaga-Ishii-Narita]\label{Deg-Whitt-Borel}
    Suppose that $c_0=c_3=0$.
    For a large discrete series representation $\calD_{\lambda}$ with Harish-Chandra parameter $\lambda$, we denote a Whittaker function of $\calD_{\lambda}$ with the minimal $K$-type $\tl$ of $\calD_{\lambda}$ by $\sum_{i=0}^{d_{\Lambda}}\phi_i(g)v_i^*$.
    \begin{enumerate}
    \item Let $\lambda\in\Xi_{II}$. The restriction of coefficient functions $\phi_i$ to $A_0^\infty$ is explicitly given as
    \[
    C_0f_{0,i}(a_0)+C_1f_{1,i}(a_0)+C_2f_{2,i}(a_0)+C_3f_{3,i}(a_0)+C_4f_{4,i}(a_0)
    \]
    with arbitrary constants $C_0,~C_1,~C_2,~C_3$ and $C_4$, where
    \begin{align*}
    f_{0,i}(a_0)&:=
    \begin{cases}
    a_1^{2-\Lambda_2}a_2^{\Lambda_1}&(i=0\\
    0&(0< i\le d_{\Lambda})
    \end{cases},\\
    f_{1,i}(a_0)&:=
    \begin{cases}
    (-1)^{i/2}a_1^{\Lambda_1+1}a_2^{\Lambda_2+1}&(\text{$i$:even})\\
    0&(\text{$i$:odd})
    \end{cases},\\
    f_{2,i}(a_0)&:=
    \begin{cases}
    0&(\text{$i$:even})\\
    (-1)^{\frac{i-1}{2}}a_1^{\Lambda_1+1}a_2^{\Lambda_2+1}&(\text{$i$:odd})
    \end{cases},\\
    f_{3,i}(a_0)&:=
    \begin{cases}
    2^{i/2}(\frac{-d_{\Lambda}+2}{2})_{i/2}a_1^{ \Lambda_1+1}a_2^{-\Lambda_2+1}&(\text{$i$ is even and $0\le i\le d_{\Lambda}+\delta(2\Lambda_2-1)$})\\
    0&(\text{otherwise})
    \end{cases},\\
    f_{4,i}(a_0)&:=
    \begin{cases}
    2^{\frac{i-1}{2}}(\frac{-d_{\Lambda}+2}{2})_{\frac{i-1}{2}}a_1^{ \Lambda_1+1}a_2^{-\Lambda_2+1}&(\text{$i$ is odd and $0\le i\le d_{\Lambda}+(1-\delta)(2\Lambda_2-1)$})\\
    0&(\text{otherwise})
    \end{cases}.
    \end{align*}
    Here, $\delta\in\{0,1\}$ is determined by $d_{\Lambda}\equiv\delta\mod 2$.
    \item Let $\lambda\in\Xi_{III}$. The restriction of coefficient functions $\varphi_i(g)$ to $A_0^\infty$ is explicitly given as
    \[
    C_0f_{0,i}(a_0)+C_1f_{1,i}(a_0)+C_2f_{2,i}(a_0)+C_3f_{3,i}(a_0)+C_4f_{4,i}(a_0)
    \]
    with arbitrary constants $C_0,~C_1,~C_2,~C_3$, and $C_4$, where
    \begin{align*}
    f_{0,i}(a_0)&:=
    \begin{cases}
    a_1^{2+\Lambda_1}a_2^{-\Lambda_2}&(i=d_{\Lambda})\\
    0&(0\le i<d_{\Lambda})
    \end{cases},\\
    f_{1,i}(a_0)&:=
    \begin{cases}
    (-1)^{i/2}a_1^{-\Lambda_2+1}a_2^{-\Lambda_1+1}&(\text{$i$:even})\\
    0&(\text{$i$:odd})
    \end{cases},\\
    f_{2,i}(a_0)&:=
    \begin{cases}
    0&(\text{$i$:even})\\
    (-1)^{\frac{i-1}{2}}a_1^{-\Lambda_2+1}a_2^{-\Lambda_1+1}&(\text{$i$:odd})
    \end{cases},\\
    f_{3,i}(a_0)&:=
    \begin{cases}
    2^{i/2}(\frac{-d_{\Lambda}+2}{2})_{i/2}a_1^{-\Lambda_2+1}a_2^{\Lambda_1+1}&(\text{$i$ is even and $0\le i\le d_{\Lambda}+\delta(2\Lambda_2-1)$})\\
    0&(\text{otherwise})
    \end{cases},\\
    f_{4,i}(a_0)&:=
    \begin{cases}
    2^{\frac{i-1}{2}}(\frac{-d_{\Lambda}+2}{2})_{\frac{i-1}{2}}a_1^{-\Lambda_2+1}a_2^{\Lambda_1+1}&(\text{$i$ is odd and $0\le i\le d_{\Lambda}+(1-\delta)(2\Lambda_2-1)$})\\
    0&(\text{otherwise})
    \end{cases}.
    \end{align*}
    Here, $\delta\in\{0,1\}$ is determined by $d_{\Lambda}\equiv\delta\mod 2$.
    \end{enumerate}
    \end{thm}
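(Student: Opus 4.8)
The plan is to mirror the proof of Theorem~\ref{Deg-Whitt-Siegel} while exploiting the drastic simplification brought by the trivial character. First I would invoke Lemma~\ref{Whittaker-MVWinv}: the Moeglin--Vign\'eras--Waldspurger involution identifies the Whittaker functions for $\lambda\in\Xi_{III}$ with those for $\lambda\in\Xi_{II}$, so it suffices to establish part~(1) and then transport the result by $g\mapsto\delta g\delta^{-1}\xi$ to obtain part~(2). This is exactly the reduction used at the start of the proof of Theorem~\ref{Deg-Whitt-Siegel}.

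Next I would specialize the radial system to the trivial character. The equations $(A'_i),(B'_i),(C'_j)$ recorded inside the proof of Proposition~\ref{DiffEq-deg1} were derived under $c_3=0$ for arbitrary $c_0$; setting $c_0=0$ deletes every term carrying the factor $\frac{a_1}{a_2}$, so what remains is a system of first-order operators that are polynomial in $\partial_1,\partial_2$ with constant coefficients. Equivalently, in the logarithmic coordinates $(\log a_1,\log a_2)$ on $A_0^\infty\cong(\R_{>0})^2$ the operators $\partial_i=a_i\,\partial/\partial a_i$ are translation-invariant, so the whole holonomic system becomes constant-coefficient. In particular the confluent hypergeometric functions $W_{\kappa,\mu}$ that governed Theorem~\ref{Deg-Whitt-Siegel} degenerate: the relevant second-order equation (the $c_0=0$ analogue of $(F_i)$) collapses to an equidimensional (Euler) equation, and its moderate candidates become pure monomials $a_1^{s}a_2^{t}$, with quasi-monomial/log solutions appearing only at resonances.

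I would then solve the reduced system by a monomial ansatz. A key structural point is that, once $c_0=0$, each surviving relation couples $\phi_i$ only to $\phi_{i\pm2}$ and carries no exponent shift, so the even-indexed and odd-indexed coefficient functions decouple into two independent chains. Substituting $a_1^{s}a_2^{t}$ into the leading relations gives an indicial computation whose admissible exponents are exactly the three types occurring in the statement, namely $(s,t)=(2-\Lambda_2,\Lambda_1)$, $(\Lambda_1+1,\Lambda_2+1)$ and $(\Lambda_1+1,-\Lambda_2+1)$; the remaining relations propagate a fixed exponent along a chain and determine the coefficients, producing the alternating signs of $f_{1,i},f_{2,i}$ and the Pochhammer factors of $f_{3,i},f_{4,i}$.

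The delicate part, and the \emph{main obstacle}, is the analysis at the ends of the chains, $i=0$ and $i=d_\Lambda$, where $(A'_i)$ and $(B'_i)$ cease to apply and the two summands of a relation may be forced to vanish separately rather than cancel. It is precisely here that the isolated mode $f_{0,i}$, supported only at $i=0$, enters—this is the term absent from the earlier statement, recovered by the individual-annihilation condition at the boundary—and that the parity parameter $\delta$ with $d_\Lambda\equiv\delta\pmod 2$ fixes the exact truncation ranges of $f_{3,i}$ and $f_{4,i}$ via the vanishing of a Pochhammer factor. After verifying that the five families $f_{0,\bullet},\dots,f_{4,\bullet}$ are linearly independent and exhaust the solution space of the reduced constant-coefficient system, the assertion for $\Xi_{II}$ follows, and Lemma~\ref{Whittaker-MVWinv} then yields the case $\Xi_{III}$.
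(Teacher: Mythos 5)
Your proposal is correct and follows essentially the same route as the paper's own proof: reduction to $\Xi_{II}$ via Lemma~\ref{Whittaker-MVWinv}, specialization of the radial system in the $\{v_i^*\}$ basis to the trivial character (the paper rederives from Lemma~\ref{Diffeq-basic} exactly the equations you obtain by setting $c_0=0$ in $(A'_i),(B'_i),(C'_j)$), solution of the resulting Euler-type system by monomials with even/odd chains decoupled, and the boundary analysis at $i=0$ and $i=d_\Lambda$ that produces the isolated mode $f_{0,i}$ and the parity parameter $\delta$. The only cosmetic difference is that the paper extracts the first-order relation $(\partial_1-\Lambda_1-1)\phi_i=0$ and a second-order companion directly, then splits into the cases $(\partial_2-\Lambda_1)\phi_0=0$ or not, rather than phrasing the endpoint analysis as an indicial computation, but the content is the same.
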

    
    \begin{proof}
    We give a proof only for $\lambda\in\Xi_{II}$ in view of Lemma \ref{Whittaker-MVWinv}.
    Different from the previous theorem, we solve the differential equations described with the basis $\{v_i^*\mid 0\le i\le d_{\Lambda}\}$~(cf.~Section \ref{RepMaxCpt}). 
    Plugging the Iwasawa decomposition of the non-compact root vectors into the formulas in Lemma \ref{Diffeq-basic}, from the left $N_0$-invariance and the right $K$-equivariance with respect to $\tl^*$, we deduce the differential equations as follows:
    \begin{align}
        &(\partial_1+\Lambda_2-i-2)\phi_i(a_0)+(\partial_2+\Lambda_1-i-2)\phi_{i+2}(a_0)
        =0 \tag{$A_i$},\\
        &(\partial_2-\Lambda_1-i)\phi_i(a_0)+(\partial_1-2\Lambda_1+\Lambda_2+i)\phi_{i+2}(a_0)
        =0\tag{$B_i$},\\
        &j(\partial_2-\Lambda_1+j-1)\phi_{j-1}(a_0)-(\Lambda_1-\Lambda_2-j)(\partial_1-\Lambda_1+j-1)\phi_{j+1}(a_0)
        =0\tag{$C_i$}
    \end{align}
    Here, $i$ runs over $0\le i\le d_{\Lambda}-2$ and $j$ runs over $0\le j\le d_{\Lambda}$.
    
    From $(A_i)$ and $(B_i)$ we obtain
    \begin{equation}
        \left((\partial_2-1)^2-\Lambda_2^2-(\partial_1-\Lambda_1-1)(\partial_1-\Lambda_1+2\Lambda_2-1)\right)\phi_i(a_0)=0 \qquad\text{for any $0\le i\le d_{\Lambda}$} \tag{$D_i$}
    \end{equation}
    and from $i(B_{i-1})-(C_i)$
    \begin{equation}
        (\partial_1-\Lambda_1-1)\phi_i(a_0)=0 \qquad \text{ for any $0\le i\le d_{\Lambda}-1$} \tag{$E_i$}.
    \end{equation}
    By $(D_i)$ and $(E_i)$, for any $1\le i\le d_{\Lambda}$, we see
    \[
    \phi_i(a_0)=C_ia_1^{\Lambda_1+1}a_2^{\Lambda_2+1}+D_ia_1^{\Lambda_1+1}a_2^{-\Lambda_2+1}
    \]
    with constants $C_i,~D_i$ depending only on $i$. 
    We determine $C_i,~D_i$ and $\phi_0(a)$ by $(A_i),~(B_i)$ and $(C_{d_{\Lambda}})$. The differential equations $(A_i)$s imply
    \begin{align*}
       (\partial_1+\Lambda_2-2)\phi_0(a_0)+C_2(\Lambda_1+\Lambda_2-1)a_1^{\Lambda_1+1}a_2^{\Lambda_2+1}+D_2(\Lambda_1-\Lambda_2-1)a_1^{\Lambda_1+1}a_2^{-\Lambda_2+1}&=0,\\
        (\Lambda_1+\Lambda_2-i-1)(C_i+C_{i+2})&=0,\\
        \quad(\Lambda_1+\Lambda_2-i-1)D_i+(\Lambda_1-\Lambda_2-i-1)D_{i+2}&=0,
    \end{align*}
    for any $1\le i\le d_{\Lambda}-2$, and $(B_i)$s imply
    \begin{align*}
       (\partial_2-\Lambda_1)\phi_0(a_0)-(\Lambda_1-\Lambda_2-1)\left(C_2a_1^{\Lambda_1+1}a_2^{\Lambda_2+1}+D_2a_1^{\Lambda_1+1}a_2^{-\Lambda_2+1}\right)
        &=0,\\
        (\Lambda_1-\Lambda_2-i-1)(C_i+C_{i+2})
        &=0,\\\
        (\Lambda_1+\Lambda_2-i-1)D_i+(\Lambda_1-\Lambda_2-i-1)D_{i+2}
        &=0,
    \end{align*}
    for any $1\le i\le d_{\Lambda}-2$.
      We now first assume that $(\partial_2-\Lambda_1)\phi_0(a_0)=0$.
      The fourth equation of the six above and $(A_0)$ lead to
      \[
      \phi_0(a_0)=C_0a_1^{2-\Lambda_2}a_2^{\Lambda_1},~C_2=D_2=0,
      \]
      and the fifth and sixth~(or second and third) thus lead to
      \[
      C_{2i}=D_{2i}=0~(i\ge 1).
      \]
      Next, suppose that $(\partial_2-\Lambda_1)\phi_0(a_0)\not=0$. We then see that we can put $\phi_0(a)=C_0a_1^{\Lambda_1+1}a_2^{\Lambda_2+1}+D_0a_1^{\Lambda_1+1}a_2^{-\Lambda_2+1}$ in view of the fourth equation.
      We can therefore express $\phi_i(a)$ as
      \[
      \phi_i(a_0)=C_ia_1^{\Lambda_1+1}a_2^{\Lambda_2+1}+D_ia_1^{\Lambda_1+1}a_2^{-\Lambda_2+1}
      \]
    for any $1\le i\le d_{\Lambda}$.
    In view of $(C_{d_{\Lambda}})$ and the six differential equations above the determination of $C_i$ and $D_i$ is reduced to the following three equations:
    \begin{align*}
        &D_{d_{\Lambda}-1}=0,\\
        &(\Lambda_1+\Lambda_2-i-1)(C_i+C_{i+2})=0,~(\Lambda_1-\Lambda_2-i-1)(C_i+C_{i+2})=0,\\
        &(\Lambda_1+\Lambda_2-i-1)D_i+(\Lambda_1-\Lambda_2-i-1)D_{i+2}=0,
    \end{align*}
    for any $0 \leq i \leq d_{\Lambda}-2$.
    We verify that $C_i$ is determined recursively from $C_0$ or $C_1$ when $i$ is even or odd, respectively.
    Also, we verify $D_i$ from $D_{d_{\Lambda}}$ or $D_{d_{\Lambda}-1}$. More precisely, we have to be careful about the parity of $d_{\Lambda}$ for the determination of $D_i$s. 
    As a result of such a careful analysis of the recurrence relations on $C_i$ and $D_i$, we obtain the results in the assertion.
    \end{proof}
    
    \begin{rem}
    In Theorem \ref{Deg-Whitt-Siegel}, we have to assume the moderate growth condition for the Whittaker functions to exclude those of non-moderate growth. In fact, we are motivated by studies of automorphic forms. However, we do not have to assume such a condition for Theorem \ref{Deg-Whitt-Borel} since every solution is of moderate growth for this case.
    \end{rem}
    
\subsection{Fourier-Jacobi type spherical functions for the trivial character of $N_J$}
    
    We next examine interested in the Fourier-Jacobi type spherical functions introduced by Hirano \cite{2001_Hirano} for the irreducible unitary representations of $G_J$ of the form 
    \[
    \rho_{\pi_1,0}=\pi_1\boxtimes 1_{N_J}
    \]
    with an irreducible unitary representation $\pi_1$ of $\SL_2(\R)$ and the trivial character $1_{N_J}$ of $N_J$. 
    Recall that, by $\cD^{+}_n$~(resp.~$\cD^{-}_n$), we denote a discrete series representation of $\SL_2(\R)$ with lowest weight $n\in\Z_{>1}$~(resp.~highest weight $-n$).
    Note that the Fourier-Jacobi type spherical functions are determined by their restriction to $A_J^\infty$~(cf.~Section \ref{BN}) by its left $\rho_{\pi_1,0}$-equivalence and right $\tl^*$-equivalence.
    The following is due to Hirano \cite[Theorems 7.10,~7.11]{2001_Hirano}.
    
    \begin{thm}[Hirano]\label{FJ-sphrical}
    \begin{enumerate}
    \item Let $\calD_{\lambda}$ be a large discrete series representation of $G$ with Harish-Chandra parameter $\lambda\in\Xi_{II}$ and minimal $K$-type $\tl$.
    Then, the Fourier-Jacobi type spherical function for $\calD_{\lambda}$ of type $(\rho_{\pi_1,0},\calD_{\lambda};\tl)$ is non-zero if and only if $\pi_1=\cD_{\lambda_1+1}^+$ or $\cD_{-\lambda_2+1}^+$. 
    
    When $\pi_1=\cD^+_{\lambda_1+1}$, it is given by
    \[
    a^{-\lambda_2+2}(w_{\lambda_1+1}\otimes v_{d_{\Lambda}}^*)
    \]
    up to scalars.
    When $\pi_1=\cD^+_{-\lambda_2+1}$, it is given by
    \[
    a^{\lambda_1+2}
    \left(\sum_{0\le i\le[\frac{\lambda_1+\lambda_2}{2}]}(-1)^i\frac{(-\lambda_2+1)\cdots(-\lambda_2+i)}{i!}w_{-\lambda_2+2i+1}\otimes v_{-2\lambda_2+2i+1}^*
    \right)
    \]
    up to scalars.
    \item Let $\calD_{\lambda}$ be a large discrete series representation of $G$ with Harish-Chandra parameter $\lambda\in\Xi_{III}$ and minimal $K$-type $\tl$. Then, the Fourier-Jacobi type spherical function for $\calD_{\lambda}$ of type $(\rho_{\pi_1,0},\calD_{\lambda};\tl)$ is non-zero if and only if $\pi_1=\cD_{-\lambda_2+1}^-$ or $\cD_{\lambda_1+1}^-$. 
    
    When $\pi_1=\cD^-_{-\lambda_2+1}$, it is given by
    \[
    a^{\lambda_1+2}(w_{\lambda_2-1}\otimes v_0^*)
    \]
    up to scalars.
    When $\pi_1=\cD^-_{\lambda_1+1}$, it is given by
    \[
    a^{-\lambda_2+2}\left(\sum_{0\le i\le[\frac{\lambda_1+\lambda_2}{2}]}(-1)^i\frac{(\lambda_1+1)\cdots(\lambda_1+i)}{i!}w_{-\lambda_1-2i-1}\otimes v_{-\lambda_1-\lambda_2-2i}^*\right)
    \]
    up to scalars.
    \end{enumerate}
    Note that we use the notations $w_l$ and $v_k^*$ of the bases for the representation spaces of the discrete series of $SL_2(\R)$ and $\tl^*$ in Sections \ref{DS-SL(2)} and \ref{RepMaxCpt}.
    \end{thm}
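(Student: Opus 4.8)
The plan is to mirror the strategy used for the degenerate Whittaker functions in the proofs of Theorems \ref{Deg-Whitt-Siegel} and \ref{Deg-Whitt-Borel}, replacing the left character of $N_0$ by the representation $\rho_{\pi_1,0}$ of the Jacobi group $G_J$. First I would reduce to $\lambda\in\Xi_{II}$, the case $\lambda\in\Xi_{III}$ following from it by the contragredient involution of Lemma \ref{Whittaker-MVWinv}, which interchanges $\cD^{+}_n$ with $\cD^{-}_n$ and the roles of $\Xi_{II}$ and $\Xi_{III}$. A Fourier-Jacobi type spherical function of type $(\rho_{\pi_1,0},\calD_{\lambda};\tl)$ is, exactly as in Section \ref{Deg-Whitt}, an element in the image of the restriction to the minimal $K$-type $\tl$ of $\Hom_{(\g,K)}(\calD_{\lambda},C^{\infty}_{\rho_{\pi_1,0}}(G_J\backslash G))$. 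Such a function is left $G_J$-equivariant and right $\tl^{*}$-equivariant, and since $G=G_J A_J^\infty K$ (absorbing the disconnected part of $M_J$ into $K$) it is completely determined by its restriction to the one-parameter group $A_J^\infty=\{a\}$. Writing this restriction as $\sum_{\ell,k}c_{\ell,k}(a)\,w_\ell\otimes v_k^{*}$ in terms of the bases of Sections \ref{DS-SL(2)} and \ref{RepMaxCpt} reduces the problem to a system for the scalar radial coefficients $c_{\ell,k}(a)$ in the single variable $a$.

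Next I would derive the governing system. As in Lemma \ref{Diffeq-basic}, the $(\g,K)$-homomorphism property is encoded by the Dirac-Schmid operators built from the non-compact root vectors $X_{\pm(2,0)},X_{\pm(1,1)},X_{\pm(0,2)}$ acting on $\calD_\lambda$ through its minimal $K$-type. Restricting to $A_J^\infty$ and substituting the Iwasawa decomposition of these vectors (Lemma \ref{Iwasawa-decomp}), the Cartan directions produce the radial derivative $a\frac{d}{da}$, the compact directions $K_{ij}$ act on the right through $\tl^{*}$ and shift the index $k$, the Heisenberg directions inside $\Lie(N_J)$ (spanned by $E_{e_1-e_2},E_{2e_1},E_{e_1+e_2}$) act on the left by the trivial character $1_{N_J}$ and therefore drop out, and the remaining $\SL_2(\R)$-directions $E_{\pm 2e_2},H_2$ act on the left through $\pi_1$, shifting the $\SO(2)$-weight index $\ell$ of $w_\ell$ by $\pm 2$ according to the discrete-series formulas of Section \ref{DS-SL(2)}. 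The outcome is a finite coupled system of first-order ordinary differential equations and two-step recurrences relating the $c_{\ell,k}(a)$ across neighbouring values of $\ell$ and $k$.

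I would then solve this system. The presence of a lowest-weight (resp.\ highest-weight) vector $w_{\pm n}$ of $\pi_1=\cD^{\pm}_n$ supplies boundary conditions truncating the recurrences, and the triviality of the $N_J$-character removes the exponential and confluent-hypergeometric behaviour present in Theorem \ref{Deg-Whitt-Siegel}, so each surviving $c_{\ell,k}$ is forced to be a pure power $a^{s}$. Matching $s$ against the $\tl^{*}$-weights and the $\SL_2$-weight of $w_\ell$ pins it down to $-\lambda_2+2$ or $\lambda_1+2$, and moderate growth selects the admissible branch. The compatibility of the coupled recurrences then holds only when the lowest weight of $\pi_1$ equals $\lambda_1+1$ or $-\lambda_2+1$; for every other discrete series the homogeneous system forces the identically zero solution, which yields the non-vanishing criterion. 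Solving the surviving recurrence explicitly produces the single-term answer $a^{-\lambda_2+2}(w_{\lambda_1+1}\otimes v_{d_{\Lambda}}^{*})$ in the case $\pi_1=\cD^{+}_{\lambda_1+1}$ and the binomial-type coefficients $(-1)^i\frac{(-\lambda_2+1)\cdots(-\lambda_2+i)}{i!}$ in the case $\pi_1=\cD^{+}_{-\lambda_2+1}$.

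The main obstacle I anticipate is the bookkeeping of the doubly-indexed system: the coefficients $c_{\ell,k}(a)$ are coupled simultaneously in the $\SL_2$-weight $\ell$ and the $K$-type index $k$, so one must organize the Schmid relations so that the two families of shifts decouple enough to be integrated in closed form, and one must convert the split-Cartan action of $E_{\pm 2e_2},H_2$ into the compact $U,V^{\pm}$ basis in which the $\{w_\ell\}$ diagonalize. Establishing the sharp non-vanishing dichotomy---that only $\pi_1=\cD^{+}_{\lambda_1+1}$ and $\pi_1=\cD^{+}_{-\lambda_2+1}$ survive---is where most of the careful case analysis lies, as it requires showing the homogeneous recurrence system admits only the zero solution for all remaining weights.
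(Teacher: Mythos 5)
Note first that the paper contains no internal proof of this theorem: it is quoted from Hirano, with the proof deferred to \cite[Theorems 7.10, 7.11]{2001_Hirano}, so your proposal is effectively an attempt to reconstruct Hirano's argument rather than to match an argument in the paper. Your skeleton is the right one, and it is the same method the paper itself uses for Theorems \ref{Deg-Whitt-Siegel} and \ref{Deg-Whitt-Borel}: reduce to the radial part on $A_J^\infty$ using $G=G_JA_J^\infty K$, turn the $(\g,K)$-intertwining property into Dirac--Schmid differential equations via Lemma \ref{Iwasawa-decomp}, observe that the Heisenberg directions drop out because $\rho_{\pi_1,0}$ is trivial on $N_J$, and decouple the doubly-indexed coefficients by matching the left $\SO(2)$-weight (through $\pi_1$) against the right $K$-weight (through $\tl^*$); with the trivial $N_J$-character the radial system is of Euler type, so its solutions are powers of $a$, consistent with the stated formulas.

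There is, however, a genuine gap in your treatment of the non-vanishing criterion, which is the heart of the statement. The ``only if'' direction quantifies over \emph{all} irreducible unitary representations $\pi_1$ of $\SL_2(\R)$ --- this is how $\rho_{\pi_1,0}$ is introduced in the paper, and the application in Theorem \ref{II_main_thm_P_J}(1) needs exactly this generality, since the archimedean component of a cuspidal automorphic representation of $\SL_2(\bbA)$ is a priori an arbitrary irreducible unitary representation. Your argument rules out only the \emph{other discrete series}: the mechanism you propose (``the presence of a lowest-weight (resp.\ highest-weight) vector $w_{\pm n}$ supplies boundary conditions truncating the recurrences'') cannot even be formulated for unitary principal series, complementary series, the limits of discrete series $\cD_1^{\pm}$, or the trivial representation, none of which possess extreme weight vectors. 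Excluding those requires a separate analysis of the same radial system (with growth conditions along $A_J^\infty$ entering), and this is where much of the cited work lies. A second, smaller gap: Lemma \ref{Whittaker-MVWinv} is stated for Whittaker functions on $N_0\backslash G$, so your reduction of part 2 to part 1 needs the additional check that the involution $g\mapsto \delta g\delta^{-1}\xi$ is compatible with left $G_J$-equivariance, i.e., that $\Ad(\delta)$ preserves $G_J$ and carries $\rho_{\pi_1,0}$ to $\rho_{\pi_1',0}$ with $\pi_1'\simeq \cD_n^{\mp}$ when $\pi_1\simeq\cD_n^{\pm}$ (true, since $\Ad(\delta)$ preserves $N_J$ and acts on the $\SL_2$-factor as conjugation by $\diag(-1,1)$), and that right translation by $\xi\in K$ merely twists the $K$-type; this verification is absent from your sketch.
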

    
\subsection{Representations generated by Whittaker functions}\label{RepWhittFct}
\subsection*{(i) The case of Siegel parabolic subgroup}
    
    We discuss the Whittaker functions associated with the constant terms along the Siegel parabolic subgroup $P_S$ based on the result of the case $c_0\not=0,~c_3=0$ in Section \ref{Deg-Whitt}.
    For this discussion, we need Lemma \ref{Whittaker-formula}.
    
    For an integer $\ell$, let $\delta_\ell$ be the character of $\SO(2)$ defined by $\left(
    \begin{smallmatrix}
    \cos\theta & \sin\theta\\
    -\sin\theta & \cos\theta
    \end{smallmatrix}
    \right)
    \mapsto e^{i\ell\theta}$.
    We now state a well-known fact as the lemma below:
    \begin{lem}\label{triv_Wh_arch}
    For $m\in\R$, let $\psi_m$ be the additive character of $N_{00}:=\{
    \left(
    \begin{smallmatrix}
    1 & x\\
    0 & 1
    \end{smallmatrix}
    \right)
    \mid x\in\R\}$ defined by 
    $
    \psi_m\left(
    \left(
    \begin{smallmatrix}
    1 & x\\
    0 & 1
    \end{smallmatrix}
    \right)
    \right)=\exp(2\pi\sqrt{-1}\,mx). 
    $
    For $m\ge 0$~(resp.~$m\le 0$), the $\psi_m$-Whittaker function $w$ on $\SL_2(\R)$ for holomorphic discrete series $\cD_n^+$~(resp.~anti-holomorphic discrete series $\cD_n^-$) with 
    minimal $\SO(2)$-type $\delta_n$
    (resp.~$\delta_{-n}$) is given by 
    \[
    w\left(
    \begin{pmatrix}
    1 & x\\
    0 & 1
    \end{pmatrix}
    \begin{pmatrix}
    \sqrt{y} & 0\\
    0 & \sqrt{y}^{-1}
    \end{pmatrix}\right)=C \cdot y^{n/2}\exp(-2\pi|m|y)\psi_m(x).
    \]
    Here, $C$ is a constant and $(x,y)$ runs over $\R\times\R_{>0}$.
    Otherwise, $w\equiv 0$.
    \end{lem}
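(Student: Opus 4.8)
The plan is to realize $w$ as the image of the lowest weight vector of $\cD_n^+$ under an $(\mathfrak{sl}_2(\R),\SO(2))$-homomorphism into $C^\infty_{\psi_m}(N_{00}\backslash\SL_2(\R))$ and to exploit that this vector is annihilated by the lowering operator. First I would fix the Iwasawa decomposition $\SL_2(\R)=N_{00}A\,\SO(2)$ with $A=\{a(y):=\diag(\sqrt{y},\sqrt{y}^{-1})\mid y>0\}$. Since $w$ is left $\psi_m$-equivariant under $N_{00}$ and right $\delta_n$-equivariant under $\SO(2)$, it is determined by the single radial function $f(y):=w(a(y))$, so the whole problem reduces to pinning down $f$. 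The relevant $\Hom$-space is at most one dimensional because the characterizing equation below is of first order.

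The key input is that the lowest weight vector $w_n$ of $\cD_n^+$, which spans the minimal $\SO(2)$-type $\delta_n$, satisfies $\cD_n^+(V^-)w_n=0$, as $V^-$ would lower the weight below $n$ (cf.~Section \ref{DS-SL(2)}). Transporting this through the homomorphism gives $R(V^-)w=0$. I would then decompose $V^-$ along the Iwasawa directions as
\[
V^-=\tfrac{1}{2}H-\sqrt{-1}\,E+\tfrac{\sqrt{-1}}{2}(E-F),
\]
where $H=\diag(1,-1)$, $E=\left(\begin{smallmatrix}0&1\\0&0\end{smallmatrix}\right)$, $F=\left(\begin{smallmatrix}0&0\\1&0\end{smallmatrix}\right)$, and compute the three right-translation actions on the radial part: $R(H)$ acts as $2y\,d/dy$, the nilpotent $R(E)$ produces the factor $2\pi\sqrt{-1}\,m\,y$ coming from $\psi_m$ after reducing $a(y)n(t)$ to Iwasawa form, and $R(E-F)$ acts by the scalar $\sqrt{-1}\,n$ dictated by the $\SO(2)$-type $\delta_n$. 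Assembling these turns $R(V^-)w=0$ into the ordinary differential equation $yf'(y)+\bigl(2\pi m y-\tfrac{n}{2}\bigr)f(y)=0$, whose unique solution up to scalar is $f(y)=C\,y^{n/2}\exp(-2\pi m y)$.

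Finally I would impose the growth condition. For $m\ge 0$ the solution $C\,y^{n/2}\exp(-2\pi|m|y)$ is of moderate growth, and reinstating the equivariance factor $\psi_m(x)$ recovers exactly the asserted formula; for $m<0$ the unique solution grows like $\exp(2\pi|m|y)$, so moderate growth forces $C=0$ and hence $w\equiv 0$. The anti-holomorphic case $\cD_n^-$ is handled by the mirror argument, replacing $V^-$ with the raising operator $V^+$ (which kills the highest weight vector $w_{-n}$) and $\delta_n$ with $\delta_{-n}$; this flips the relevant sign and produces a nonzero moderate-growth solution precisely when $m\le 0$. The one delicate point is the bookkeeping of signs and normalizations — in particular the exact scalar by which $R(E-F)$ acts under the chosen convention $w(gk)=\delta_n(k)^{\pm1}w(g)$ — since a slip there would misplace the sign of the exponent; the remainder is a routine first-order ODE.
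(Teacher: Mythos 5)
Your proposal is correct and follows essentially the same route as the paper: both realize $w$ as the image of the lowest (resp.\ highest) weight vector of $\cD_n^{\pm}$, impose the Cauchy--Riemann condition $V^{\mp}\cdot w=0$, and solve the resulting first-order ODE for the radial part. Your write-up merely makes explicit what the paper leaves implicit, namely the Iwasawa decomposition of $V^{\mp}$ and the moderate-growth argument that kills the solution in the wrong-sign case.
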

    \begin{proof}
    The Whittaker function $w$ is the image of
    \[
    \Hom_{({\mathfrak{sl}}_2(\bbR),\SO(2))}\left(\cD_n^{\pm},C^{\infty}\text{-}{\rm Ind}_{N_{00}}^{\SL_2(\R)}\psi_{m}\right)\rightarrow
    \Hom_{\SO(2)}\left(\delta_n^{\pm},C^{\infty}\text{-}{\rm Ind}_{N_{00}}^{\SL_2(\R)}\psi_{m}\right)
    \]
    defined by the restriction map of $\cD_{n}^{\pm}$ to $\delta_{\pm n}$.
    For $\cD_n^+$~(resp.~$\cD_n^-$), $w$ is characterized by the Cauchy Riemann condition $V^{-}\cdot w=0$~(resp. its complex conjugate $V^+\cdot w=0$) since $w$ corresponds to the lowest weight vector or the highest weight vector of $\cD_n^+$ or $\cD_n^-$, respectively. 
    This equation is written as an elementary differential equation of rank one, which leads to the explicit formula for $w$ in the assertion. 
    \end{proof}
    
    We now put $y_1:=a_1/a_2, y_2:=a_1a_2$.
    First, let $\lambda\in\Xi_{II}$.
    We focus on the first of the two coefficient functions in Theorem \ref{Deg-Whitt-Siegel} part 1, which can be written as
    \[
    \varphi_i(a_0)
    =(a_1a_2)^{\frac{d_{\Lambda}+2}{2}}W_{{\rm sgn}(c_0)\left(i-\frac{d_{\Lambda}}{2}\right),\frac{\Lambda_1+\Lambda_2-1}{2}}\left(4\pi|c_0|\frac{a_1}{a_2}\right)
    =y_2^{\frac{d_{\Lambda}+2}{2}}y_1^{\frac{\Lambda_1+\Lambda_2}{2}}\exp(-2\pi|c_0|y_1)
    \]
    for $i=\Lambda_1$ when $c_0>0$~(resp.~$i=-\Lambda_2$ when $c_0<0$).
    As a function in $y_1$, this is well-known as a Whittaker function for the holomorphic~(resp.~anti-holomorphic) discrete series of $\SL_2(\R)$ with lowest weight $\Lambda_1+\Lambda_2$ for $c_0>0$~(resp.~highest weight $-\Lambda_1-\Lambda_2$ for $c_0<0$).  
    As for the other, $\varphi_i$ with $i\not=\Lambda_1$ they are understood as the shifts of $y_1^{\frac{\Lambda_1+\Lambda_2}{2}}\exp(-2\pi|c_0|y_1)$ by the Maass weight raising or lowering operators~(cf.~\cite[Proposition 2.2.5,~(2.31),~(2.32)]{1997_Bump}).  
    In fact, when $c_0>0$ or $c_0<0$, in view of the formula (\ref{f1}) or (\ref{f2}), the explicit formula for the former coefficient function in Theorem \ref{Deg-Whitt-Siegel} part 1 satisfies
    \begin{align*}
    \left(
    y_1\frac{d}{dy_1}+\sqrt{-1}\left(2\pi c_0\sqrt{-1}\right)y_1+\left(i-\frac{d_{\Lambda}}{2}\right)
    \right)
    W_{i-\frac{d_{\Lambda}}{2},\frac{\Lambda_1+\Lambda_2-1}{2}}(4\pi|c_0|y_1)
    =
    -W_{i+1-\frac{d_{\Lambda}}{2},\frac{\Lambda_1+\Lambda_2-1}{2}}(4\pi|c_0|y_1),
    \end{align*}
    for $c_0>0$ and
    \begin{align*}
    &\left(y_1\frac{d}{dy_1}+\sqrt{-1}\left(2\pi c_0\sqrt{-1}\right)y_1+\left(i-\frac{d_{\Lambda}}{2}\right)\right)
    W_{-(i-\frac{d_{\Lambda}}{2}),\frac{\Lambda_1+\Lambda_2-1}{2}}(4\pi|c_0|y_1)\\
    &\qquad
    =-\left(\left(\frac{\Lambda_1+\Lambda_2-1}{2}\right)^2-\left(i-\frac{d_{\Lambda}+1}{2}\right)^2\right)W_{-(i+1-\frac{d_{\Lambda}}{2}),\frac{\Lambda_1+\Lambda_2-1}{2}}(4\pi|c_0|y_1)
    \end{align*}
    for $c_0<0$, which is the weight raising or lowering starting from $y_1^{\frac{\Lambda_1+\Lambda_2}{2}}\exp(-2\pi|c_0|y_1)$, respectively.
    
    Second, let $\lambda\in\Xi_{III}$.
    We here focus on the first of the two coefficient functions in Theorem \ref{Deg-Whitt-Siegel} part 2 and have 
    \[
    \varphi_i(a_0)=(a_1a_2)^{\frac{d_{\Lambda}+2}{2}}W_{{\rm sgn}(c_0)\left(i-\frac{d_{\Lambda}}{2}\right),\frac{\Lambda_1+\Lambda_2+1}{2}}\left(4\pi|c_0|\frac{a_1}{a_2}\right)
    =y_2^{\frac{d_{\Lambda}+2}{2}}y_1^{-\frac{\Lambda_1+\Lambda_2}{2}}\exp(-2\pi|c_0|y_1)
    \]
    for $i=-\Lambda_2$ when $c_0>0$~(resp.~$i=\Lambda_1$ when $c_0<0$).
    Note that we use (\ref{f3}) and (\ref{f4}) to obtain this formula.
    As a function in $y_1$, this is well-known as a Whittaker function for the holomorphic~(resp.~anti-holomorphic) discrete series of $\SL_2(\R)$ with lowest weight $-(\Lambda_1+\Lambda_2)$ for $c_0>0$~(resp.~highest weight $\Lambda_1+\Lambda_2$ for $c_0<0$), for which we remark that $\Lambda_1+\Lambda_2<0$ for $\lambda\in\Xi_{III}$. As for the other coefficients, they are understood as the shifts of $y_1^{-\frac{\Lambda_1+\Lambda_2}{2}}\exp(-2\pi|c_0|y_1)$ by the Maass weight raising or lowering operators. 
    In fact, when $c_0>0$ or $c_0<0$, in view of the formula (\ref{f1}) or (\ref{f2}), the explicit formula for the former coefficient function in Theorem \ref{Deg-Whitt-Siegel} part 2 satisfies
    \begin{align*}
    \left(y_1\frac{d}{dy_1}+\sqrt{-1}\left(2\pi c_0\sqrt{-1}\right)y_1+\left(i-\frac{d_{\Lambda}}{2}\right)\right)W_{i-\frac{d_{\Lambda}}{2},\frac{\Lambda_1+\Lambda_2+1}{2}}(4\pi|c_0|y_1)
    =-W_{i+1-\frac{d_{\Lambda}}{2},\frac{\Lambda_1+\Lambda_2+1}{2}}(4\pi|c_0|y_1),
    \end{align*}
    for $c_0>0$ and
    \begin{align*}
    &\left(y_1\frac{d}{dy_1}+\sqrt{-1}\left(2\pi c_0\sqrt{-1}\right)y_1+\left(i-\frac{d_{\Lambda}}{2}\right)\right)W_{-(i-\frac{d_{\Lambda}}{2}),\frac{\Lambda_1+\Lambda_2+1}{2}}(4\pi|c_0|y_1)\\
    &\qquad=-\left(\left(\frac{\Lambda_1+\Lambda_2+1}{2}\right)^2-\left(i-\frac{d_{\Lambda}+1}{2}\right)^2\right)W_{-(i+1-\frac{d_{\Lambda}}{2}),\frac{\Lambda_1+\Lambda_2+1}{2}}(4\pi|c_0|y_1)
    \end{align*}
    for $c_0<0$, which is the weight raising or lowering starting from $y_1^{-\frac{\Lambda_1+\Lambda_2}{2}}\exp(-2\pi|c_0|y_1)$, respectively. 
    From the discussion so far, we deduce the following lemma:
    
    \begin{lem}\label{Wh_siegel}
    Let ${\cal W}_S$ be the $({\fraks\frakl}_2(\R),\mathrm{O}(2))$-module generated by $\{\langle w_S,v\rangle\mid v\in V_{\Lambda}\}$ with a Whittaker function $w_S$ as in Theorem \ref{Deg-Whitt-Siegel}, where $\langle*,*\rangle$ denotes the canonical pairing for $V_{\Lambda}^*\times V_{\Lambda}$. 
    The representation ${\cal W}_S$ is then a subspace of $\cD_{\Lambda_1+\Lambda_2} \oplus \cD_{d_{\Lambda}}$ as $({\fraks\frakl}_2(\R),\mathrm{O}(2))$-modules.
    \end{lem}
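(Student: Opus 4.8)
The plan is to realize $\cW_S$ concretely as a space of Whittaker functions on $M_S\simeq\SL_2^{\pm}(\R)$ and to match it, string by string, with the two discrete series. Writing $\{u_i\}$ for the basis of $V_{\Lambda}$ dual to $\{u_i^*\}$, I first observe that $\langle w_S,u_i\rangle$ is exactly the coefficient function $\varphi_i$, so $\cW_S$ is the $({\fraks\frakl}_2(\R),\mathrm{O}(2))$-module generated by the $\varphi_i$ under right translation by $M_S$; this right action is well defined because $f\mapsto(g\mapsto f(gm))$ preserves left $(N_0,\psi)$-equivariance. By the Iwasawa decomposition of $\SL_2^{\pm}(\R)$ every function in $\cW_S$ is determined by its restriction to $A_0^\infty$, which I read off from Theorem \ref{Deg-Whitt-Siegel}. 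The transformation law (\ref{u-basis}) shows that each $\varphi_i$ is an $\SO(2)$-weight vector of weight $2i-d_{\Lambda}$, so the raising/lowering elements $V^{\pm}\in{\fraks\frakl}_2(\C)$ act by shifting $i$ by $\pm1$, and in the radial variable $y_1=a_1/a_2$ they become the classical Maass weight-shift operators (cf.~\cite{1997_Bump}), the factor $y_2=a_1a_2$ being a spectator central variable.

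Next I would treat the two terms of the formula in Theorem \ref{Deg-Whitt-Siegel}(1) separately, say for $c_0>0$. The elementary term is supported only at $i=d_{\Lambda}$ and, up to the central factor $y_2^{(\Lambda_1+\Lambda_2+2)/2}$, equals $y_1^{d_{\Lambda}/2}\exp(-2\pi|c_0|y_1)$; by Lemma \ref{triv_Wh_arch} this is the lowest-weight Whittaker function of the weight-$d_{\Lambda}$ holomorphic discrete series, hence is annihilated by the lowering operator and generates under $V^{+}$ the Whittaker model of $\cD_{d_{\Lambda}}^{+}$. For the confluent hypergeometric term, formula (\ref{f3}) identifies the extremal coefficient $\varphi_{\Lambda_1}$ with $y_1^{(\Lambda_1+\Lambda_2)/2}\exp(-2\pi|c_0|y_1)$, which by Lemma \ref{triv_Wh_arch} is the lowest-weight Whittaker function of the weight-$(\Lambda_1+\Lambda_2)$ discrete series; the remaining coefficients are its images under the shift operators, precisely as recorded in the displayed identities preceding this lemma through (\ref{f1}) and (\ref{f2}). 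Thus the confluent hypergeometric coefficients all lie in the Whittaker model of $\cD_{\Lambda_1+\Lambda_2}^{+}$. The case $c_0<0$ is identical with ``holomorphic'' replaced by ``anti-holomorphic''.

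To pass from $\SO(2)$ to $\mathrm{O}(2)$, I would use that the reflection $r\in\mathrm{O}(2)=M_S\cap K$ satisfies $rZ'r^{-1}=-Z'$, so right translation by $r$ carries a weight-$m$ vector to a weight-$(-m)$ vector; applying $r$ to the positive-weight strings above produces the complementary negative-weight strings, and the two halves assemble into the $\SL_2^{\pm}(\R)$ discrete series $\cD_{\Lambda_1+\Lambda_2}=\cD_{\Lambda_1+\Lambda_2}^{+}\oplus\cD_{\Lambda_1+\Lambda_2}^{-}$ and $\cD_{d_{\Lambda}}=\cD_{d_{\Lambda}}^{+}\oplus\cD_{d_{\Lambda}}^{-}$. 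Since the Whittaker model of an irreducible admissible representation of $\SL_2^{\pm}(\R)$ is multiplicity free (Wallach \cite{1983_Wallach}), each of these two models is isomorphic to the corresponding discrete series as an $({\fraks\frakl}_2(\R),\mathrm{O}(2))$-module, and $\cW_S$, being generated by functions lying in their sum, embeds into $\cD_{\Lambda_1+\Lambda_2}\oplus\cD_{d_{\Lambda}}$.

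The main obstacle is to be sure that the abstract right $M_S$-action on $\cW_S$ agrees, on the radial part, with the Maass shift operators and that the two extremal coefficients are genuinely annihilated by the appropriate lowering (resp.~raising) operator, so that each string closes up as an honest discrete series rather than generating a larger, possibly reducible, induced representation. Most of this matching is already contained in the computations through (\ref{f1})--(\ref{f2}) carried out just before the lemma; what remains is the bookkeeping that the confluent hypergeometric string and the elementary string neither coincide nor mix under $V^{\pm}$. This is forced by their distinct ${\fraks\frakl}_2$-Casimir eigenvalues, since $\Lambda_1+\Lambda_2\neq d_{\Lambda}=\Lambda_1-\Lambda_2$ as $\Lambda_2\neq0$ for $\lambda\in\Xi_{II}$, so that the two discrete series are inequivalent and $\cW_S$ sits inside the direct sum as claimed.
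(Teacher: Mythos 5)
Your proposal is correct and takes essentially the same route as the paper's proof: identify the coefficient functions $\varphi_i$ as right $\SO(2)$-weight vectors via (\ref{u-basis}), use the shift-operator computations with (\ref{f1})--(\ref{f3}) preceding the lemma to recognize the extremal coefficients as lowest (resp.\ highest) weight Whittaker functions generating $\cD_{\Lambda_1+\Lambda_2}^{\pm}$ and $\cD_{d_{\Lambda}}^{\pm}$, and then use translation by an element of $\mathrm{O}(2)\setminus\SO(2)$ to pass to the contragredient string and assemble the full $\SL_2^{\pm}(\R)$ discrete series $\cD_{\Lambda_1+\Lambda_2}$ and $\cD_{d_{\Lambda}}$. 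Your explicit handling of the case where both constants $C_0,C_1$ are nonzero (non-mixing of the two strings via distinct Casimir eigenvalues) and the appeal to Wallach's multiplicity-one theorem are refinements the paper leaves implicit, but they do not change the argument.
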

    \begin{proof}
    We first note that the Lie algebra of $\SL^{\pm}(\R)$ is ${\fraks\frakl}_2(\R)$ and that its complexification ${\fraks\frakl}_2(\C)$ has a basis
    \[
    \left\{
    V^+=\frac{1}{2}
    \begin{pmatrix}
    1 & \sqrt{-1}\\
    \sqrt{-1} & -1
    \end{pmatrix},
    V^-=\frac{1}{2}
    \begin{pmatrix}
    1 & -\sqrt{-1}\\
    -\sqrt{-1} & -1
    \end{pmatrix},
    U=
    \begin{pmatrix}
    0 & -\sqrt{-1}\\
    \sqrt{-1} & 0
    \end{pmatrix}
    \right\}
    \]
    (cf. Section \ref{DS-SL(2)}), which forms an ${\fraks\frakl}_2$-triple. 
    Theorem \ref{Deg-Whitt-Siegel} and the discussion before this lemma shows us that the $\C$-span of $\{\langle w_S,v\rangle\mid v\in V_{\Lambda}\}$ is a one-dimensional space generated by a lowest weight vector or a highest weight vector, which is characterized by the vanishing with respect to the derivation along $V^-$ or $V^+$, respectively, or the $\C$-span of the weight raises of such lowest weight vector by $V^+$~(resp.~the weight lowerings of such highest weight vector by $V^-$). 
    We thereby see that the $({\fraks\frakl}_2(\R),\SO(2))$-module generated by  $\{\langle w_S,v\rangle\mid v\in V_{\Lambda}\}$ is isomorphic to the holomorphic discrete series specified by $\cD_{\Lambda_1+\Lambda_2}^+$ or $\cD_{d_{\Lambda}}^+$ or to the anti-holomorphic discrete series specified by $\cD_{\Lambda_1+\Lambda_2}^-$ or $\cD_{d_{\Lambda}}^-$. 
    
    Moreover, consider the translations of $\{\langle w_S,v\rangle\mid v\in V_{\Lambda}\}$ by the matrix $
    \left(
    \begin{smallmatrix}
    -1 & 0\\
    0 & 1
    \end{smallmatrix}\right)\in \mathrm{O}(2)\setminus \SO(2)$.
    The $({\fraks\frakl}_2(\R), \SO(2))$-module generated by them is then the contragredient of the $({\fraks\frakl}_2(\R), \SO(2))$-module generated by $\{\langle w_S,v\rangle\mid v\in V_{\Lambda}\}$.
    As a result, we see that the $({\fraks\frakl}_2(\R), \mathrm{O}(2))$-module generated by $\{\langle w_S,v\rangle\mid v\in V_{\Lambda}\}$ is isomorphic to $\cD_{\Lambda_1+\Lambda_2}$ or $\cD_{d_{\Lambda}}$, which is a sum of $\cD_{\Lambda_1+\Lambda_2}^{\pm}$ or $\cD_{d_{\Lambda}}^{\pm}$, respectively.
    \end{proof}

\subsection*{(ii)~The case of Jacobi parabolic subgroup}
    
    Let $\frakL_J$ be the Lie algebra of $L_J$.
    Then, $\frakL_J \isom \bbC \oplus \mathfrak{sl}_2(\bbR)$.
    As an immediate consequence of Theorem \ref{FJ-sphrical} we have the following:
    
    \begin{lem}\label{Wh_jacobi}
    Let ${\cal W}_J$ be the $(\frakL_J, \SO(2))$-module generated by $\{\langle w_J,v\rangle_{\cH}\mid v\in V_{\Lambda}\}$ with a Whittaker function $w_J$ as in Theorem \ref{FJ-sphrical}, where $\langle*,*\rangle_{\cH}$ denotes the canonical pairing $\cH\otimes V_{\Lambda}^*\times V_{\Lambda}\rightarrow\cH$ with the representation space $\cH$ of $\cD_{\lambda_1+1}^{\pm}$ or $\cD_{-\lambda_2+1}^{\pm}$.
    The representation ${\cal W}_J$ is then a subspace of $|\cdot|^{-\lambda_2+2} \boxtimes \cD_{\lambda_1+1}^+ \oplus |\cdot|^{\lambda_1+2} \boxtimes \cD_{\--\lambda_2+1}^+$ when $\lambda\in\Xi_{II}$~(resp. $|\cdot|^{-\lambda_2+2} \boxtimes \cD_{\lambda_1+1}^- \oplus |\cdot|^{\lambda_1+2} \boxtimes \cD_{-\lambda_2+1}^-$ when $\lambda\in\Xi_{III}$).
    \end{lem}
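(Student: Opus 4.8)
The proof is a direct reading of the explicit formulas in Theorem~\ref{FJ-sphrical}. Recall the decomposition $\frakL_J \isom \bbC \oplus \mathfrak{sl}_2(\bbR)$, in which the first factor is the Lie algebra of the split torus $A_J^\infty = \{\diag(a,1,a^{-1},1)\}$ (acting through the character $\alpha \mapsto |\alpha|$) and the second is the Lie algebra of the $\SL_2(\bbR)$-factor of $L_J$, with maximal compact subgroup $\SO(2)$. The plan is to compute the contraction $\langle w_J, v\rangle_{\cH}$ for $v \in V_\Lambda$, to read off the $A_J^\infty$-character from the power of $a$, and to identify the $(\mathfrak{sl}_2(\bbR),\SO(2))$-submodule of $\cH$ generated by the surviving components. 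Throughout, the left $\SL_2(\bbR)$-translation on $w_J$ acts through $\rho_{\pi_1,0}=\pi_1\boxtimes 1_{N_J}$, i.e.\ through $\pi_1$ on the $\cH$-factor, while the $w_\ell$ are $\SO(2)$-weight vectors of weight $\ell$.

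First I would treat $\lambda \in \Xi_{II}$. When $\pi_1 = \cD^+_{\lambda_1+1}$, the pairing contracts the factor $v^*_{d_\Lambda}$ against $v$ via $\langle v^*_{d_\Lambda},v\rangle$, leaving a scalar multiple of $a^{-\lambda_2+2}\,w_{\lambda_1+1}$. Since $a^{-\lambda_2+2}$ is the restriction of $|\cdot|^{-\lambda_2+2}$ to $A_J^\infty$, and since $w_{\lambda_1+1}$ is the lowest-weight vector of $\cD^+_{\lambda_1+1}$ (hence generates it under the raising operator $V^+$ of Section~\ref{DS-SL(2)}), the resulting $(\frakL_J,\SO(2))$-module is contained in $|\cdot|^{-\lambda_2+2}\boxtimes\cD^+_{\lambda_1+1}$. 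When $\pi_1 = \cD^+_{-\lambda_2+1}$, pairing with the basis vectors $v_k$ of $V_\Lambda$ isolates the individual summands $a^{\lambda_1+2}\,w_{-\lambda_2+2i+1}$, the index $k=-2\lambda_2+2i+1$ lying in the admissible range $0 \le k \le d_\Lambda$ precisely because $\lambda_1 > -\lambda_2$. These are weight vectors of $\cD^+_{-\lambda_2+1}$, and for $i=0$ one obtains its lowest-weight vector $w_{-\lambda_2+1}$, so the $(\mathfrak{sl}_2(\bbR),\SO(2))$-module they generate is $\cD^+_{-\lambda_2+1}$, now with $A_J^\infty$ acting through $|\cdot|^{\lambda_1+2}$.

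Collecting the two choices of $\pi_1$ yields the asserted containment of $\cW_J$ in the direct sum. The point distinguishing this from the Siegel case of Lemma~\ref{Wh_siegel} is that here we work with $\SO(2)$ rather than $\mathrm{O}(2)$, so no contragredient summand is adjoined and we remain inside a single holomorphic discrete series $\cD^+$. The case $\lambda \in \Xi_{III}$ is entirely parallel: the formulas of Theorem~\ref{FJ-sphrical} now produce the highest-weight vectors $w_{\lambda_2-1}$ of $\cD^-_{-\lambda_2+1}$ and $w_{-\lambda_1-1}$ of $\cD^-_{\lambda_1+1}$, generating the anti-holomorphic series under $V^-$; alternatively it follows from the $\Xi_{II}$ case through the involution of Lemma~\ref{Whittaker-MVWinv}. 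The only genuine point requiring care is the bookkeeping identifying the left $\SL_2(\bbR)$-action on $\langle w_J,v\rangle_{\cH}$ with the $\pi_1$-action on $\cH$ and confirming that the displayed weight vectors generate precisely the relevant discrete series; everything else is immediate from the explicit formulas.
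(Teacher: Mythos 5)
Your proof is correct and follows exactly the route the paper intends: the paper states this lemma with no separate argument, as an ``immediate consequence'' of Theorem~\ref{FJ-sphrical}, and your verification --- reading off the $A_J^\infty$-exponent $a^{-\lambda_2+2}$ or $a^{\lambda_1+2}$ from Hirano's explicit formulas and noting that the contractions $\langle w_J,v\rangle_{\cH}$ include the extreme weight vectors $w_{\lambda_1+1}$ and $w_{-\lambda_2+1}$ (resp.\ $w_{\lambda_2-1}$ and $w_{-\lambda_1-1}$ for $\Xi_{III}$), which generate the corresponding discrete series $\cD^{\pm}$ inside $\cH$ --- is precisely that consequence spelled out. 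The index bookkeeping and the contrast you draw with the $\mathrm{O}(2)$ situation of Lemma~\ref{Wh_siegel} are both accurate.
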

    
\subsection{The case of minimal parabolic subgroup}\label{section_min_parab}
    
    Let $\frakL_0$ be the Lie algebra of $L_0$, the Levi subgroup of $P_0$.
    
    \begin{lem}\label{Wh_Borel}
    Let ${\cal W}_0$ be the $\frakL_0$-module generated by a Whittaker function $w_0$ as in Theorem \ref{Deg-Whitt-Borel}.
    The representation ${\cal W}_0$ is then a subspace of 
    \[
    |\cdot|^{-\Lambda_2+2} \boxtimes |\cdot|^{\Lambda_1}
    \oplus
    |\cdot|^{\Lambda_1+1} \boxtimes |\cdot|^{\Lambda_2+1}
    \oplus
    |\cdot|^{\Lambda_1+1} \boxtimes |\cdot|^{-\Lambda_2+1}
    \]
    when $\lambda\in\Xi_{II}$ and
    \[
    |\cdot|^{\Lambda_1+2} \boxtimes |\cdot|^{-\Lambda_2}
    \oplus
    |\cdot|^{-\Lambda_2+1} \boxtimes |\cdot|^{-\Lambda_1+1}
    \oplus
    |\cdot|^{-\Lambda_2+1} \boxtimes |\cdot|^{\Lambda_1+1}
    \]
    when $\lambda\in\Xi_{III}$.
    \end{lem}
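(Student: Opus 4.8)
The plan is to read the statement off directly from the explicit radial formulas of Theorem~\ref{Deg-Whitt-Borel}, exploiting that for the minimal parabolic the Levi $L_0$ is the diagonal torus, so $\frakL_0=\fraka=\R H_1\oplus\R H_2$ is abelian and its right regular action never leaves $A_0^\infty$. Since here $c_0=c_3=0$ the character $\psi$ is trivial on $N_0$, so a Whittaker function $w_0$ is left $N_0$-invariant and right $\tl^*$-equivariant, hence is determined by its restriction to $A_0^\infty$ via the Iwasawa decomposition; consequently the module it generates is governed entirely by this radial restriction.

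First I would fix $\lambda\in\Xi_{II}$ and, in parallel with Lemmas~\ref{Wh_siegel} and~\ref{Wh_jacobi}, read $\cW_0$ as the $\frakL_0$-module generated by the scalar functions $\langle w_0,v\rangle$ for $v\in V_{\Lambda}$. Because $\{v_i^*\}$ is a basis of $V_{\Lambda}^*$, pairing against all $v$ recovers precisely the $\C$-linear span of the coefficient functions $\phi_0,\dots,\phi_{d_{\Lambda}}$. By Theorem~\ref{Deg-Whitt-Borel}(1) each $\phi_i|_{A_0^\infty}$ is a fixed combination $\sum_{j=0}^{4}C_jf_{j,i}(a_0)$, and on $A_0^\infty$ each $f_{j,i}$ is a scalar multiple of one of only three characters,
\[
|\cdot|^{-\Lambda_2+2}\boxtimes|\cdot|^{\Lambda_1},\qquad
|\cdot|^{\Lambda_1+1}\boxtimes|\cdot|^{\Lambda_2+1},\qquad
|\cdot|^{\Lambda_1+1}\boxtimes|\cdot|^{-\Lambda_2+1},
\]
corresponding to the exponential functions $a_1^{2-\Lambda_2}a_2^{\Lambda_1}$ (from $f_{0,i}$), $a_1^{\Lambda_1+1}a_2^{\Lambda_2+1}$ (from $f_{1,i},f_{2,i}$), and $a_1^{\Lambda_1+1}a_2^{-\Lambda_2+1}$ (from $f_{3,i},f_{4,i}$). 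Thus the span of the $\phi_i$ already lies in the three-dimensional space spanned by these three characters.

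Next I would let $\frakL_0$ act. Since $a_0\exp(tH_k)\in A_0^\infty$ for $H_k\in\fraka$, no Iwasawa re-decomposition intervenes and the action is plain differentiation along the torus: with $\partial_k=a_k\,\partial/\partial a_k$ one has $R(H_1)(a_1^{s_1}a_2^{s_2})=s_1 a_1^{s_1}a_2^{s_2}$ and $R(H_2)(a_1^{s_1}a_2^{s_2})=s_2 a_1^{s_1}a_2^{s_2}$. Hence each of the three characters is an eigenfunction, they are pairwise distinct for $\lambda\in\Xi_{II}$ (using $\lambda_1>-\lambda_2>0$ together with $\Lambda=(\lambda_1+1,\lambda_2)$), and applying $\frakL_0$ does not enlarge their span. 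This realizes $\cW_0$ as an $\frakL_0$-submodule of the direct sum of the three characters, settling the $\Xi_{II}$ case; for $\lambda\in\Xi_{III}$ I would argue identically from Theorem~\ref{Deg-Whitt-Borel}(2), or transport the result through the involution of Lemma~\ref{Whittaker-MVWinv}.

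I do not expect a serious obstacle, since the entire content is packaged in Theorem~\ref{Deg-Whitt-Borel}. The only points that need care are the clean bookkeeping that the right $\frakL_0$-action genuinely stays inside $A_0^\infty$, so that the right $\tl^*$-equivariance plays no further role and the radial restriction suffices, and the verification that the exponents $(2-\Lambda_2,\Lambda_1)$, $(\Lambda_1+1,\Lambda_2+1)$ and $(\Lambda_1+1,-\Lambda_2+1)$ match the displayed characters with no spurious modulus or normalization shift.
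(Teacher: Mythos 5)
Your proof is correct and is essentially the paper's own reasoning: the paper states Lemma \ref{Wh_Borel} with no proof at all, treating it as an immediate consequence of Theorem \ref{Deg-Whitt-Borel}, which is exactly your argument — every coefficient function $f_{j,i}$ restricted to $A_0^\infty$ is a constant multiple of one of the three power functions $a_1^{-\Lambda_2+2}a_2^{\Lambda_1}$, $a_1^{\Lambda_1+1}a_2^{\Lambda_2+1}$, $a_1^{\Lambda_1+1}a_2^{-\Lambda_2+1}$ (and their type-III analogues), and the abelian $\frakL_0=\fraka$ acting by radial differentiation has these as eigenfunctions, so the generated module sits inside the stated sum of characters. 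The two points you flag for care (that the right $\frakL_0$-action never forces an Iwasawa re-decomposition, so the radial restriction suffices, and the exponent bookkeeping with $\Lambda=(\lambda_1+1,\lambda_2)$ ensuring the three characters are distinct) are precisely the details that make the "immediate" claim rigorous.
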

    
    We now discuss the embedding of $\calD_\lambda$ into principal series representations.
    
    \begin{cor}\label{emb_p0_II}
    Let $\lambda \in \Xi_{II}$.
    Let $\mu_1$ and $\mu_2$ be unitary characters of $\bbR^\times$.
    \begin{enumerate}
        \item The principal series representation
        \[
        \Ind_{P_0}^G
        \left(\mu_1|\cdot|^{-\lambda_2} \boxtimes \mu_2|\cdot|^{\lambda_1}\right)
        \]
        contains $\calD_\lambda$ if and only if $\mu_1 = \sgn^{\lambda_2}$ and $\mu_2=\sgn^{\lambda_1+1}$.
        \item The principal series representation
        \[
        \Ind_{P_0}^G
        \left(\mu_1|\cdot|^{\lambda_1}\boxtimes \mu_2|\cdot|^{\lambda_2}\right)
        \]
        contains $\calD_\lambda$ if and only if $\mu_1\mu_2 = \sgn^{\lambda_1+\lambda_2+1}$.
        \item The principal series representation
        \[
        \Ind_{P_0}^G
        \left(\mu_1|\cdot|^{\lambda_1}\boxtimes \mu_2|\cdot|^{-\lambda_2}\right)
        \]
        contains $\calD_\lambda$ if and only if $\mu_1\mu_2 = \sgn^{\lambda_1+\lambda_2+1}$.
        \item The representation $\calD_\lambda$ does not occur in the principal series representations
        \[
        \text{
        $\Ind_{P_0}^G\left(\sgn^{\lambda_1-\lambda_2+1}|\cdot|^{-\lambda_1}\boxtimes|\cdot|^{\lambda_2}\right)$ 
        $\quad$ 
        and 
        $\quad$ 
        $\Ind_{P_0}^G \left(\sgn^{\lambda_1-\lambda_2}|\cdot|^{-\lambda_1}\boxtimes \sgn|\cdot|^{\lambda_2}\right)$
        }
        \]
        as subrepresentations.
    \end{enumerate}
    \end{cor}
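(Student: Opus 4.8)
The plan is to detect each embedding $\calD_\lambda\hookrightarrow\Ind_{P_0}^G(\chi)$ by its minimal $K$-type Whittaker function for the \emph{trivial} character of $N_0$, which is exactly what Theorem \ref{Deg-Whitt-Borel} classifies. Since $\chi$ is trivial on $N_0$, the space $\Ind_{P_0}^G(\chi)$ is a $(\frakg,K)$-submodule of $C^{\infty}_{1}(N_0\backslash G)$ consisting of left $N_0$-invariant functions that are left $L_0$-equivariant under $\delta_{P_0}\chi$. Restricting a nonzero embedding to $\tl$ thus yields a nonzero Whittaker function $w_0=\sum_i\phi_i v_i^*\in C^{\infty}_{1,\tl^*}(N_0\backslash G/K)$; conversely, because $\calD_\lambda$ is irreducible and generated by $\tl$, any nonzero $w_0$ from Theorem \ref{Deg-Whitt-Borel} that lies in $\Ind_{P_0}^G(\chi)$ generates a copy of $\calD_\lambda$ there. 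Hence $\calD_\lambda$ occurs as a subrepresentation of $\Ind_{P_0}^G(\chi)$ if and only if some Whittaker function of Theorem \ref{Deg-Whitt-Borel} is left $L_0$-equivariant under $\delta_{P_0}\chi$. I would split this into two independent conditions: an \emph{archimedean magnitude} condition on $A_0^\infty$, and a \emph{sign} condition on the component group $M_0=\{\diag(\vep_1,\vep_2,\vep_1,\vep_2)\}$.

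First I would handle the magnitudes. Left $A_0^\infty$-equivariance forces every $\phi_i|_{A_0^\infty}$ to be a single monomial equal to $\delta_{P_0}\chi$, and by Theorem \ref{Deg-Whitt-Borel} (equivalently the module ${\cal W}_0$ of Lemma \ref{Wh_Borel}) the only monomials occurring for $\lambda\in\Xi_{II}$ are $|\cdot|^{-\Lambda_2+2}\boxtimes|\cdot|^{\Lambda_1}$, $|\cdot|^{\Lambda_1+1}\boxtimes|\cdot|^{\Lambda_2+1}$ and $|\cdot|^{\Lambda_1+1}\boxtimes|\cdot|^{-\Lambda_2+1}$. Substituting the Blattner parameter $\Lambda=(\lambda_1+1,\lambda_2)$ and dividing by $\delta_{P_0}$, whose restriction to $A_0^\infty$ is $|\cdot|^{2}\boxtimes|\cdot|^{1}$, produces precisely the three magnitudes $|\cdot|^{-\lambda_2}\boxtimes|\cdot|^{\lambda_1}$, $|\cdot|^{\lambda_1}\boxtimes|\cdot|^{\lambda_2}$, $|\cdot|^{\lambda_1}\boxtimes|\cdot|^{-\lambda_2}$ of cases (1), (2), (3). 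The same bookkeeping settles (4): there the magnitude $|\cdot|^{-\lambda_1}\boxtimes|\cdot|^{\lambda_2}$, multiplied by $\delta_{P_0}$, gives $|\cdot|^{-\lambda_1+2}\boxtimes|\cdot|^{\lambda_2+1}$, which (using $\lambda_1>0$ and $\lambda_1>-\lambda_2$) equals none of the three monomials, so no nonzero Whittaker function can be $A_0^\infty$-equivariant under it; thus $\calD_\lambda$ is not a subrepresentation, even though it is a subquotient, these inducing characters being Weyl-conjugate to the ones in (1).

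The sign characters $\mu_1,\mu_2$ are the substantive content, and Lemma \ref{Wh_Borel} alone does not suffice, since $\frakL_0$ does not see $M_0$. For a pure Whittaker function, comparing the left $M_0$-action (which is $\chi(m)$) with the right $M_0\subset K$-action (which is $\tl^*(m)^{-1}$) at a point of $A_0^\infty$ gives $\chi(m)=\tl^*(m)^{-1}$ on each basis vector $v_i^*$ that actually supports $w_0$; thus $\mu_1(\vep_1)\mu_2(\vep_2)$ is read off from the $M_0$-weight $\vep_1^{a_i}\vep_2^{b_i}$ of those $v_i^*$, with $(a_i,b_i)$ the $K$-weight recorded in Section \ref{RepMaxCpt} (after incorporating the determinant twist). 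For types (2) and (3) the monomial is supported on indices $i$ of both parities, and one checks the parity-dependent contributions collapse to the single relation $\mu_1\mu_2=\sgn^{\Lambda_1+\Lambda_2}=\sgn^{\lambda_1+\lambda_2+1}$; for type (1) only $v_0^*$ contributes, so $\mu_1$ and $\mu_2$ are pinned down individually.

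I expect the genuine obstacle to be exactly this sign bookkeeping, namely assigning $\sgn^{\lambda_2}$ versus $\sgn^{\lambda_1+1}$ to the correct torus slot of $L_0$. The $M_0$-weight computation depends on the identification $K\simeq\mathrm{U}(2)$ and on the indexing convention of $\{v_i^*\}$, so I would fix all signs by an independent cross-check: induction in stages from $P_J$. Since $P_0\subset P_J$ and the $A_J$-slot coincides with the first torus coordinate, the character $\sgn^{\lambda_2}|\cdot|^{-\lambda_2}$ dictated by Lemma \ref{emb_pj} must be reproduced in the $P_0$-picture, which fixes the slot assignment in case (1) and is consistent with the product relation in (2), (3). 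Assembling the magnitude constraints with the thus-calibrated sign constraints then yields the stated equivalences (1)--(3) together with the non-occurrence (4).
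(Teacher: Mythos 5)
Your reduction to the trivial-character Whittaker functions, and the magnitude/sign bookkeeping itself, are essentially sound: the three $A_0^\infty$-exponents of the five-dimensional space in Theorem \ref{Deg-Whitt-Borel}, normalized by the modulus character, are exactly the exponents of cases (1)--(3); the exponent in (4) matches none of them; and the $M_0$-weights of the supporting vectors $v_i^*$ ($f_0$ supported only at $i=0$; $f_1,f_3$ on even $i$; $f_2,f_4$ on odd $i$) produce precisely the stated sign conditions, one character pair per basic solution, hence the product relation in (2), (3) and the pinned-down pair in (1). Your caution about conventions is warranted: the twist $\det^{2\Lambda_2}$ written in Section \ref{RepMaxCpt} cannot be literally correct (it does not have highest weight $\Lambda$), so a calibration such as your $P_J$ cross-check is genuinely needed to fix the slot assignment. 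All of this, however, only uses the easy direction (an embedding restricted to $\tl$ gives a nonzero equivariant solution), so as written it proves the ``only if'' parts and (4), but not the ``if'' parts.

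The gap is your converse: you claim a nonzero solution $w_0$ of the system in Lemma \ref{Diffeq-basic} lying in $\Ind_{P_0}^G(\chi)$ ``generates a copy of $\calD_\lambda$ there, because $\calD_\lambda$ is irreducible and generated by $\tl$.'' That does not follow. The Dirac--Schmid equations are necessary conditions on the image of the minimal $K$-type under any $(\g,K)$-map out of $\calD_\lambda$; a function satisfying them generates, a priori, a quotient of the universal $(\g,K)$-module defined by the minimal $K$-type and the Schmid relations, and irreducibility of $\calD_\lambda$ does not identify that quotient with $\calD_\lambda$. The assertion that such solutions correspond bijectively to elements of $\Hom_{(\g,K)}(\calD_\lambda,\Ind_{P_0}^G(\chi))$ is exactly the nontrivial theorem the paper invokes, \cite[Theorem 3.5]{1990_Yamashita}; you must either cite it or obtain the ``if'' parts by another route. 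The paper's own proof is structured to need only the easy direction: the five-dimensional solution space bounds the total number of embeddings (counted with multiplicity) by five, and five actual embeddings are then exhibited via Lemmas \ref{emb_ps} and \ref{emb_pj} together with embeddings of discrete series of $\GL_2(\bbR)$ and $\SL_2(\bbR)$ into their principal series and induction in stages. Note that your ``cross-check'' via $P_J$ \emph{is} this argument in embryo: promoted from a sign-calibration device to the existence proof (one embedding for case (1), two each for cases (2) and (3)), it closes the gap, and combined with your dimension count it recovers the full corollary without the hard direction of the Whittaker correspondence.
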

    \begin{proof}
    By \cite[Theorem 3.5]{1990_Yamashita}, it suffices to show the ``if parts'' of the three assertions other than the last one.
    In fact, by Lemma \ref{Deg-Whitt-Borel}, the space of solutions of the Dirac-Schmid operator is five dimensional.
    Thus, by \cite[Theorem 3.5]{1990_Yamashita}, there are five principal series representations that contain $\calD_\lambda$.
    The ``if parts" follow from Lemma \ref{emb_ps}, Lemma \ref{emb_pj}, and the double induction formula.
    This completes the proof.
    \end{proof}
    
    Similarly, we obtain the following:
    \begin{cor}\label{emb_p0_III}
    Let $\lambda \in \Xi_{III}$.
    Let $\mu_1$ and $\mu_2$ be unitary characters of $\bbR^\times$.
    \begin{enumerate}
        \item The principal series representation
        \[
        \Ind_{P_0}^G \left(\mu_1|\cdot|^{\lambda_1} \boxtimes \mu_2|\cdot|^{-\lambda_2}\right)
        \]
        contains $\calD_\lambda$ if and only if $\mu_1 = \sgn^{\lambda_2}$ and $\mu_2=\sgn^{\lambda_1+1}$.
        \item The principal series representation
        \[
        \Ind_{P_0}^G \left(\mu_1|\cdot|^{-\lambda_2}\boxtimes \mu_2|\cdot|^{-\lambda_1}\right)
        \]
        contains $\calD_\lambda$ if and only if $\mu_1\mu_2 = \sgn^{\lambda_1+\lambda_2+1}$.
        \item The principal series representation
        \[
        \Ind_{P_0}^G \left(\mu_1|\cdot|^{-\lambda_2}\boxtimes \mu_2|\cdot|^{\lambda_1}\right)
        \]
        contains $\calD_\lambda$ if and only if $\mu_1\mu_2 = \sgn^{\lambda_1+\lambda_2+1}$.
        \item The representation $\calD_\lambda$ does not occur in the principal series representations
        \[
        \text{$\Ind_{P_0}^G\left(\sgn^{\lambda_1-\lambda_2+1}|\cdot|^{\lambda_2}\boxtimes|\cdot|^{-\lambda_1}\right)$  $\quad$ and $\quad$ $\Ind_{P_0}^G\left( \sgn^{\lambda_1-\lambda_2}|\cdot|^{\lambda_2}\boxtimes \sgn|\cdot|^{-\lambda_1}\right)$}
        \]
        as subrepresentations.
    \end{enumerate}
    \end{cor}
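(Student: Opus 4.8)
The plan is to reproduce the proof of Corollary~\ref{emb_p0_II} with every $\Xi_{II}$-input replaced by its $\Xi_{III}$-analogue, so that the reduction via Lemma~\ref{Whittaker-MVWinv} is mirrored at the level of principal series. The backbone is Yamashita's embedding criterion \cite[Theorem 3.5]{1990_Yamashita}, which for the fixed minimal $K$-type $\tl$ identifies the embeddings $\calD_\lambda\hookrightarrow\Ind_{P_0}^G\chi$ with the degenerate Whittaker functions of $\calD_\lambda$ for the character with $c_0=c_3=0$; by Theorem~\ref{Deg-Whitt-Borel}(2) the latter space is five-dimensional, spanned by $f_{0,i},\dots,f_{4,i}$. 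Consequently $\calD_\lambda$ occurs as a subrepresentation in exactly five principal series $\Ind_{P_0}^G\chi$. It therefore suffices to produce five mutually distinct such $\chi$ for which the embedding genuinely exists; these must then be all of them, which at once gives the ``only if'' directions of (1)--(3) and forces the non-occurrence asserted in (4), since the two characters listed there do not appear in the explicit list underlying Lemma~\ref{Wh_Borel} (for $\Xi_{III}$).

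To exhibit the five embeddings I would combine the $\Xi_{III}$-parts of Lemma~\ref{emb_ps} and Lemma~\ref{emb_pj} with induction in stages. From Lemma~\ref{emb_ps}(2) the representation $\calD_\lambda$ sits inside $\Ind_{P_S}^G(|\cdot|^{(\lambda_1-\lambda_2)/2}\otimes\calD_{-\lambda_1-\lambda_2+1})$ and $\Ind_{P_S}^G(|\cdot|^{-(\lambda_1+\lambda_2)/2}\otimes\calD_{\lambda_1-\lambda_2+1})$; realizing each $\GL_2(\R)$-discrete series as a submodule of a principal series of the diagonal torus of $M_S\cong\SL_2^{\pm}(\R)$ and applying the double induction formula, the twisted diagonal exponents become $\{-\lambda_2,\lambda_1\}$ and $\{-\lambda_2,-\lambda_1\}$, matching the character shapes in (3) and (2). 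From Lemma~\ref{emb_pj}(2) the representation $\calD_\lambda$ sits inside $\Ind_{P_J}^G(\sgn^{\lambda_2}|\cdot|^{-\lambda_2}\otimes\calD^-_{\lambda_1+1})$ and $\Ind_{P_J}^G(\sgn^{\lambda_1}|\cdot|^{\lambda_1}\otimes\calD^-_{-\lambda_2+1})$; inducing the $\SL_2(\R)$-factor from its torus in stages yields two further $\Ind_{P_0}^G\chi$, one feeding (1) and one feeding (3). Keeping track of the $\{\pm1\}$-component of $M_J$ and of the $\SL_2^{\pm}(\R)$-structure of $M_S$ accounts for the sign characters $\mu_1,\mu_2$ and, after discarding coincidences, leaves precisely the one character of (1), the two of (2), and the two of (3).

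The main obstacle is exactly this character bookkeeping: one must carry the modulus characters $\delta_{P_0},\delta_{P_S},\delta_{P_J}$ and the $\rho$-shifts through induction in stages so that the half-integral twists land on the stated integral powers $|\cdot|^{\pm\lambda_1},|\cdot|^{\pm\lambda_2}$, and one must pin down the sign characters arising from the Borel embeddings of the $\SL_2(\R)$- and $\GL_2(\R)$-discrete series so that they coincide with $\sgn^{\lambda_2},\sgn^{\lambda_1+1}$ and so on, and so that the resulting five characters are pairwise distinct (neither over- nor under-counting against the five-dimensional Whittaker space). A clean way to discharge the sign and distinctness verifications, and to avoid redoing them from scratch, is to transport the already-established Corollary~\ref{emb_p0_II} through the contragredient/Chevalley involution of Lemma~\ref{Whittaker-MVWinv}, which interchanges the $\Xi_{II}$ and $\Xi_{III}$ pictures and sends each embedding into $\Ind_{P_0}^G\chi$ to an embedding into the dual induced representation; checking that this transport reproduces the lists in (1)--(4) then completes the proof.
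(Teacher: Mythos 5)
Your proposal is correct and takes essentially the same route as the paper: the paper's proof of Corollary~\ref{emb_p0_II} is exactly the counting argument you describe (Yamashita's criterion plus the five-dimensional solution space of Theorem~\ref{Deg-Whitt-Borel}, with the five ``if parts'' supplied by Lemma~\ref{emb_ps}, Lemma~\ref{emb_pj} and the double induction formula), and the paper obtains the $\Xi_{III}$ case by declaring that the same proof applies ``similarly,'' which is precisely your mirrored argument. Your final suggestion of transporting Corollary~\ref{emb_p0_II} through the involution of Lemma~\ref{Whittaker-MVWinv} is a small addition the paper does not spell out here, but it is consistent with how the paper itself reduces $\Xi_{III}$ statements to $\Xi_{II}$ elsewhere.
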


\section{Automorphic forms}\label{section_main}
    In this section, we first review the basic definitions and properties of automorphic forms.
    Next, we investigate the cuspidal components of automorphic forms that generate large discrete series representations, and prove the main theorem.
    
\subsection{Automorphic forms on $G(\bbA)$}
	
	Set $K_\bbA = \prod_{v<\infty} G(\bbZ_v) \times K$.
	Let $\calZ$ be the center of $\calU(\frakg_\bbC)$, the universal enveloping algebra of $\frakg_\bbC$.
	We review here the general results of the theory of automorphic forms (for details, see \cite{MW}).
	The same discussion for the symplectic groups of general degree is available in \cite[\S 2]{Horinaga_2}.
	Let $P=MN$ be a standard parabolic subgroup of $G$.
	For a scalar valued smooth function $\phi$ on $N(\bbA)M(\bbQ) \backslash G(\bbA)$, we say that $\phi$ is automorphic if it satisfies the following three conditions:
	\begin{itemize}
	\item $\phi$ is right $K_\bbA$-finite.
	\item $\phi$ is $\mathcal{Z}$-finite.
	\item $\phi$ is slowly increasing.
	\end{itemize}
	We denote by $\mathcal{A}(P \backslash G)$ the space of automorphic forms on $N(\bbA)M(\bbQ) \bs G(\bbA)$.
	For simplicity, we write $\calA(P \bs G) = \calA(G)$ when $P=G$.
	The space $\mathcal{A}(P \backslash G)$ is stable under the action of $G(\bbA)$.
	
	For parabolic subgroups $P$ and $Q$ of $G$, we say that $P$ and $Q$ are associate if the split components $A_P$ and $A_Q$ are $G(\bbQ)$-conjugate. 
	We denote by $\{P\}$ the associated class of the parabolic subgroup $P$.
	For a locally integrable function $\varphi$ on $N_P(\bbQ) \bs G(\bbA)$, set
	\[
	\varphi_{P}(g) = \int_{N_P(\bbQ) \bs N_P(\bbA)} \varphi(ng) \, dn,
	\]
	where $P = M_PN_P$ is the Levi decomposition of $P$ and the Haar measure $dn$ is normalized by
	\[
	\int_{N_P(\bbQ) \bs N_P(\bbA)} \, dn = 1.
	\]
	The function $\varphi_P$ is called the constant term of $\varphi$ along $P$.
	If $\varphi$ lies in $\calA(P \bs G)$, the constant term $\varphi_Q$ is an automorphic form on $N_Q(\bbA)M_Q(\bbQ) \bs G(\bbA)$ for a parabolic subgroup $Q \subset P$.
	We call $\varphi$ cuspidal if $\varphi_Q$ is zero for any standard parabolic subgroup $Q$ of $G$ with $Q \subsetneq P$.
	We denote by $\calA_{\cusp}(P \bs G)$ the space of cusp forms in $\calA(P \bs G)$.
	For a character $\xi$ of the split component $A_P(\bbA)$ of $P(\bbA)$, put
	\[
	\calA(P \bs G)_\xi = \{\varphi \in \calA(P \bs G) \mid \text{$\varphi(ag) = a^{\xi + \rho_P} \varphi(g)$ for any $g \in G(\bbA)$ and $a \in A_P$}\}.
	\]
	Here, $\rho_P$ is the character of $A_P$ corresponding to half the sum of roots of $N_P$ relative to $A_P$ and $a^{\xi+\rho_P}=(\xi\cdot\rho_P)(a)$.
	We define $\calA_\cusp(P \bs G)_\xi$ similarly.
	Set
	\[
	\calA(P \bs G)_Z = \bigoplus_\xi \calA(P \bs G)_\xi, \qquad \calA_\cusp(P \bs G)_Z = \bigoplus_\xi \calA_\cusp(P \bs G)_\xi.
	\]
	Here, $\xi$ runs over all the characters of $A_P(\bbA)$.
	Let $\fraka_P$ be the real vector space generated by coroots associated with the root system of $G$ relative to $A_P$.
	Then, by \cite[Lemma I.3.2]{MW}, there exist canonical isomorphisms
	\begin{align*}\label{finite_function}
	\bbC[\fraka_P] \otimes \calA(P \bs G)_Z \cong \calA(P \bs G), \qquad \bbC[\fraka_P] \otimes \calA_\cusp(P \bs G)_Z \cong \calA_\cusp(P \bs G).
	\end{align*}
	
	For a standard Levi subgroup $M$, set
	\[
	M(\bbA)^1 = \bigcap_{\chi \in \Hom_{\mathrm{conti}}(M(\bbA),\bbC^\times)} \mathrm{Ker}(|\chi|).
	\]
	For a function $f$ on $G(\bbA)$ and $g \in G(\bbA)$, let $f_g$ be the function on $M_P(\bbA)^1$ defined by $m \longmapsto m^{-\rho_P}f(mg)$.
	Put
	\[
	\calA(G)_{\{P\}} = \left\{\varphi \in \calA(G) \,\middle|\, \begin{matrix}\text{$\varphi_{Q,ak}$ is orthogonal to all cusp forms on $M_Q(\bbA_Q)^1$}\\ \text{for any $a \in A_Q, k\in K_\bbA$, and $Q \not \in \{P\}$}\end{matrix}\right\}.
	\] 
	By \cite[Lemma I.3.4]{MW}, the space $\calA(G)_{\{G\}}$ is equal to $\calA_\cusp(G)$.
	More precisely, Langlands \cite{langlands} proved the following result:
	\begin{thm}
	With the above notation, we have
	\[
	\calA(G) = \bigoplus_{\{P\}}\calA(G)_{\{P\}},
	\]
	where $\{P\}$ runs through all associated classes of parabolic subgroups.
	\end{thm}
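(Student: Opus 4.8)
The plan is to prove the decomposition by induction on the parabolic rank of $G$ (the dimension of $A_{P_0}$ for a minimal parabolic $P_0$), using the constant-term map together with the cuspidal decomposition already available on each proper Levi subgroup. The base case is the anisotropic one, where $G$ has no proper parabolic subgroup and $\calA(G)=\calA_\cusp(G)=\calA(G)_{\{G\}}$, so the asserted decomposition holds vacuously. For the inductive step, I would fix $\varphi\in\calA(G)$ and examine, for every standard parabolic $P=M_PN_P$, the cuspidal projection of the constant term $\varphi_P$, that is, its component in $\calA_\cusp(M_P)$ under the decomposition of $\calA(M_P)$ guaranteed by the induction hypothesis applied to the smaller group $M_P$. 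The associate classes $\{P\}$ serve to organize this cuspidal-support data coherently across all parabolics.

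For the directness of the sum, suppose $\sum_{\{P\}}\varphi_{\{P\}}=0$ with $\varphi_{\{P\}}\in\calA(G)_{\{P\}}$, and fix an associate class $\{P_0\}$. I would take the constant term along a representative $P_0$ and then project onto cusp forms on $M_{P_0}(\bbA)^1$. By the defining orthogonality condition, every $\varphi_{\{P\}}$ with $\{P\}\neq\{P_0\}$ contributes nothing to this cuspidal projection, so only $\varphi_{\{P_0\}}$ survives; letting $P_0$ range over all representatives of $\{P_0\}$ forces the cuspidal component of every constant term of $\varphi_{\{P_0\}}$ along parabolics in $\{P_0\}$ to vanish as well, whence $\varphi_{\{P_0\}}$ lies in $\calA(G)_{\{G\}}=\calA_\cusp(G)$ and is therefore zero unless $\{P_0\}=\{G\}$. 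The essential input is that the cuspidal data attached to Levi subgroups in distinct associate classes are mutually independent, since cusp forms on non-conjugate Levis cannot be matched by the intertwining relations among constant terms.

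For the spanning, I would argue by recursion on associate classes, ordered by inclusion of split components. The pivotal input is \cite[Lemma I.3.4]{MW}, namely that $\calA(G)_{\{G\}}=\calA_\cusp(G)$: an automorphic form all of whose proper constant terms are orthogonal to cusp forms is itself cuspidal. Granting this, I would define the $\{P\}$-component of $\varphi$ by subtracting off, inductively, contributions reconstructed from the cuspidal projections of the constant terms $\varphi_Q$ associated to larger classes, and then check that the remaining form has all constant terms orthogonal to cusp forms except along parabolics in $\{P\}$, placing it in $\calA(G)_{\{P\}}$. When the recursion terminates the final remainder has all proper constant terms orthogonal to cusp forms, so by the key lemma it is cuspidal and lands in $\calA(G)_{\{G\}}$.

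The main obstacle will be verifying that the recursively constructed pieces genuinely lie in $\calA(G)_{\{P\}}$: one must control the interaction between the cuspidal projection and the transitivity of constant terms, $(\varphi_Q)_{Q'}=\varphi_{Q'}$ for $Q'\subset Q$, so that subtracting the contribution of one associate class does not spoil the orthogonality conditions defining the others. This compatibility, together with the estimates ensuring that each constructed piece is again slowly increasing, $\calZ$-finite, and $K_\bbA$-finite (hence a genuine automorphic form), is the technical core, and it is precisely where the finiteness isomorphism $\bbC[\fraka_P]\otimes\calA(P\bs G)_Z\cong\calA(P\bs G)$ and the careful bookkeeping of \cite{MW} enter.
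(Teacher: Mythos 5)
The first thing to note is that the paper contains no proof of this statement: it is quoted as a theorem of Langlands \cite{langlands}, with the one ingredient $\calA(G)_{\{G\}}=\calA_\cusp(G)$ credited to \cite[Lemma I.3.4]{MW} and the finer decomposition along cuspidal data invoked later as \cite[Theorem III.2.6]{MW}. So your proposal must stand on its own as a proof of a known deep result, and it does not.

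Your directness argument is essentially correct and is the standard one: pairing the constant term along a member $Q$ of a fixed associate class against cusp forms on $M_Q(\bbA)^1$ annihilates every summand except the one indexed by $\{Q\}$; hence all cuspidal projections of all constant terms of that summand vanish, it lies in $\calA(G)_{\{G\}}=\calA_\cusp(G)$, and (when $\{Q\}\neq\{G\}$) the defining condition applied to $Q=G$ makes it orthogonal to itself, hence zero. The genuine gap is in the spanning half. You propose to ``subtract off contributions reconstructed from the cuspidal projections of the constant terms,'' but you never construct these contributions, and their existence is the actual content of the theorem: given the cuspidal part of $\varphi_P$, one must produce a function on $G(\bbA)$ that is a genuine automorphic form --- in particular $\calZ$-finite and slowly increasing --- lying in $\calA(G)_{\{P\}}$, whose constant term along $P$ has the prescribed cuspidal part. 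The induction hypothesis on the Levi $M_P$ only decomposes $\varphi_P$ as a form on $M_P(\bbA)$; it produces nothing on $G$. No formal manipulation of constant terms yields such a form: the natural candidates are pseudo-Eisenstein series, which converge but fail to be $\calZ$-finite (so are not elements of $\calA(G)$), or genuine Eisenstein series, which are available only after convergence and meromorphic continuation are established. This is exactly why the result is attributed to Langlands: the proof in \cite{MW} passes through the $L^2$-decomposition by pseudo-Eisenstein series (Chapter II) and then transfers it to $\calA(G)$ (Chapter III), and Langlands' own proof rests on his theory of Eisenstein series. Your closing paragraph locates the ``technical core'' in checking compatibility of the subtraction with transitivity of constant terms and the finiteness isomorphism $\bbC[\fraka_P]\otimes\calA(P\bs G)_Z\cong\calA(P\bs G)$; those tools do not suffice, and the missing construction cannot be supplied by the inductive bookkeeping you describe.
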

	
	Let $M$ be a standard Levi subgroup of $G$ and $\tau$ an irreducible cuspidal automorphic representation of $M(\bbA)$.
	We then say that a pair $(M, \tau)$ is a cuspidal datum.
	Take $w \in W$, the Weyl group of $G$.
	Suppose that the Levi subgroup of $M^w$ is a Levi subgroup of a standard parabolic subgroup.
	Put $M^w=wMw^{-1}$ and let $P^w=M^wN^w$ be the standard parabolic subgroup with Levi subgroup $M^w$.
	The irreducible cuspidal automorphic representation $\tau^w$ of $M^w(\bbA)$ is defined by $\tau^w(m')=\tau(w^{-1}m'w)$ for $m'\in M^w(\bbA)$.
	Two cuspidal data $(M, \tau)$ and $(M', \tau')$ are called equivalent if there exists $w\in W$ such that $M' = M^w$ and that $\tau'=\tau^w$.
	
	Let $\mathcal{A}(G)_{(M, \tau)}$ be the subspace of automorphic forms in $\mathcal{A}(G)$ with the cuspidal support $(M, \tau)$, (for the definition, see \cite[\S III.2.6]{MW}).
	Then, the following result is well-known (for example, see \cite[Theorem III.2.6]{MW}).
	
	\begin{thm}\label{cusp_supp_decomp}
	The space $\mathcal{A}(G)$ is decomposed as
	\[
	\mathcal{A}(G)=\bigoplus_{(M, \tau)} \mathcal{A}(G)_{(M, \tau)}.
	\]
	Here, $(M, \tau)$ runs through all equivalence classes of cuspidal data.
	\end{thm}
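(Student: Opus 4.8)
The plan is to refine the Langlands decomposition $\calA(G) = \bigoplus_{\{P\}}\calA(G)_{\{P\}}$ recalled above into the finer decomposition indexed by equivalence classes of cuspidal data. Since two equivalent cuspidal data $(M,\tau)$ and $(M',\tau')$ have Weyl-conjugate, hence associate, Levi subgroups, the summands on the right-hand side can be grouped according to the associate class of $M$. It therefore suffices to fix one associate class $\{P\}$, work inside $\calA(G)_{\{P\}}$ alone, and produce there a decomposition indexed by the cuspidal automorphic representations $\tau$ of $M_P(\bbA)$, taken up to the action of those Weyl elements that stabilize $\{P\}$.

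The key tool is the constant-term map $\varphi \mapsto \varphi_P$. By the very definition of $\calA(G)_{\{P\}}$, for $\varphi$ in this space the functions $(\varphi_P)_g \colon m \mapsto m^{-\rho_P}\varphi_P(mg)$ on $M_P(\bbA)^1$ are cuspidal, so $\varphi_P$ is valued in the space of cusp forms on $M_P$. I would then invoke the discreteness of the cuspidal spectrum: $L^2_{\cusp}(M_P(\bbQ)\backslash M_P(\bbA)^1)$ decomposes as an orthogonal Hilbert direct sum $\bigoplus_\tau m(\tau)\,\tau$ over irreducible cuspidal automorphic representations $\tau$, each with finite multiplicity. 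Pulling this spectral decomposition back through the constant-term map splits $\calA(G)_{\{P\}}$ into the subspaces $\calA(G)_{(M_P,\tau)}$ according to which $\tau$ occurs in $\varphi_P$. The $\calZ$-finiteness and $K_\bbA$-finiteness built into the definition of an automorphic form guarantee that each $\varphi$ meets only finitely many $\tau$, so the sum is direct and exhausts $\calA(G)_{\{P\}}$.

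The step I expect to be the main obstacle is proving that this decomposition does not depend on the representative $P$ chosen in its associate class, and that inequivalent data yield genuinely distinct summands. This rests on the theory of intertwining operators and the functional equations relating constant terms along associate parabolics: if $P$ and $P'$ are associate via $w\in W$, then $\varphi_{P'}$ is controlled by $\varphi_P$ through standard intertwining maps, which identify the $\tau$-component for $M_P$ with the $\tau^w$-component for $M_{P'}$; this is exactly what makes the labels $(M_P,\tau)$ descend to equivalence classes of cuspidal data. Establishing that the pieces attached to inequivalent data are linearly independent amounts to showing that the cuspidal support is a genuine invariant of $\varphi$, detectable from the collection of its constant terms. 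Once this injectivity of the support assignment is secured along the lines of \cite[\S III.2]{MW}, the asserted direct sum follows.
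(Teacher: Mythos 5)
The paper offers no proof of this statement at all: it is quoted as well-known, with the reference \cite[Theorem III.2.6]{MW}, so your proposal has to be measured against the argument in that reference rather than against anything in the paper. Measured that way, there is a genuine gap, and it sits exactly at the sentence ``Pulling this spectral decomposition back through the constant-term map splits $\calA(G)_{\{P\}}$ into the subspaces $\calA(G)_{(M_P,\tau)}$.'' What your setup legitimately gives is this: the map $\varphi\mapsto\varphi_P$ is injective on $\calA(G)_{\{P\}}$, so $\calA(G)_{\{P\}}$ embeds into $\calA_\cusp(P\bs G)\cong\bigoplus_\tau \bbC[\fraka_P]\otimes\calA_\cusp(P\bs G)_\tau$. (Even this needs care: the cuspidality of $\varphi_P$ and the injectivity are not ``by the very definition'' of $\calA(G)_{\{P\}}$ — the definition only says $\varphi_Q$ is orthogonal to cusp forms for $Q\notin\{P\}$, and one must invoke the vanishing criterion \cite[Lemma I.3.4]{MW} to conclude $\varphi_Q=0$ for all $Q\subsetneq P$.) But an embedding into a direct sum does not transport the direct-sum decomposition back to the source: a subspace of $W_1\oplus W_2$ need not equal the sum of its intersections with $W_1$ and $W_2$ (consider the diagonal in $W\oplus W$). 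The assertion that every $\varphi$ actually splits as $\sum_\tau\varphi_\tau$ with each $\varphi_\tau$ an automorphic form whose constant term is the $\tau$-isotypic piece of $\varphi_P$ — i.e., that the image of the constant-term map is graded by $\tau$ — is precisely the content of the theorem. Neither the finiteness of the cuspidal support (which does follow from $\calZ$- and $K_\bbA$-finiteness) nor injectivity produces this splitting; they only give directness of the sum, not exhaustion.

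This exhaustion step is where the real work of \cite[Theorem III.2.6]{MW} lies: it requires the machinery of pseudo-Eisenstein series and the $L^2$-decomposition along cuspidal data developed in Chapter II of \cite{MW} (or an equivalent inductive construction of the projectors onto the pieces $\calA(G)_{(M,\tau)}$). Note also that one cannot shortcut this by projecting onto generalized eigenspaces of the spherical Hecke algebras and of $\calZ$: inequivalent cuspidal data need not be separated by such eigenvalues (near-equivalence already occurs for cuspidal representations of $\SL_2$, hence for the Klingen Levi $\GL_1\times\SL_2$ of $\Sp_4$). By contrast, the issue you single out as the main obstacle — independence of the representative $P$ and descent of the labels to equivalence classes of cuspidal data — is the comparatively routine part here: for $G=\Sp_4$ each associate class contains a unique standard parabolic, and the Weyl group enters only by grouping the isotypic components of $\calA_\cusp(P\bs G)$ into orbits; no intertwining-operator analysis is needed for the statement itself. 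So your first and third paragraphs are fine in outline, but the missing idea is the splitting argument in the second, which cannot be waved through.
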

	
	Let $P$ be a standard parabolic subgroup of $G$ with standard Levi subgroup $M$ and $\pi$ an irreducible cuspidal automorphic representation of $M(\bbA)$.
	Put
	\[
	\calA_\cusp(P\bs G)_\pi = \{\varphi \in \calA(P \bs G) \mid \text{$\varphi_k \in \calA_\cusp(M)_\pi$ for any $k \in K_\bbA$}\}.
	\]
	Here, $\calA_\cusp(M)_\pi$ is the $\pi$-isotypic component of $\calA_\cusp(M)$.
	For an automorphic form $\varphi$, there exists a finite collection of cuspidal data $(M,\tau)$ such that
	\[
	\varphi \in \bigoplus_{(M,\tau)} \calA(G)_{(M,\tau)}
	\]
	by Theorem \ref{cusp_supp_decomp}.
	Let $\varphi_P^\cusp$ be the cuspidal part of $\varphi_P$.
	Then, there exists a finite number of irreducible cuspidal automorphic representations $\pi_1,\ldots,\pi_\ell$ of $M_P(\bbA)$ such that
	\[
	\varphi_P^\cusp \in \bigoplus_{j=1}^\ell \bbC[\mathfrak{a}_P] \otimes \calA_\cusp(P \bs G)_{\pi_j}.
	\]
	We say that a set $\cup_M \{\chi_{\pi_1},\ldots,\chi_{\pi_\ell}\}$ is the set of cuspidal exponents of $\varphi$, where $\chi_{\pi_j}$ is the central character of $\pi_j$.
	For a character $\chi$ of the center of $M_P(\bbA)$, we call the restriction of $\chi$ to $A_P^\infty$ the real part of $\chi$.
	
	Let us now introduce the notion of induced representations on $G(\bbA)$.
	For a character $\mu$ of $\GL_n(\bbA)$, we mean that an automorphic character, i.e., $\GL_n(F)$ is contained in the kernel of $\mu$.
	We regard the representation space of $\pi$ as the space of cusp forms that generate $\pi$.
	For a parabolic subgroup $P = M_PN_P$ and an automorphic representation $\pi$ of $M_P(\bbA)$, we define the space $\Ind_{P(\bbA)}^{G(\bbA)} \left(\pi\right)$ by the space of smooth functions $\varphi$ on $N_{P}(\bbA)P(\bbQ) \bs G(\bbA)$ such that
	\begin{itemize}
	\item $\varphi$ is an automorphic form.
	\item For any $k \in K_\bbA$, the automorphic form $\varphi_k$ lies in the representation space of $\pi$.
	Here, $\varphi_k(m) = \varphi(mk)$.
	\end{itemize}
	
\subsection{Eisenstein series}
	
	In this subsection, we review the theory of Eisenstein series.
	Let $P=MN$ be a parabolic subgroup of $G$ and $(\sigma, V)$ an automorphic subrepresentation of $\calA(M)$.
	We identify the group $\fraka_P^*$ as the character group of $A_P^\infty$.
	Here, $\fraka_P^{*}$ is the dual space of $\fraka_P$.
	For $f \in \Ind_{P(\bbA)}^{G(\bbA)}(\sigma)$ and $\lambda \in \fraka_P^*$, set
	\[
	f_\lambda(g) = (m_P(g))^\lambda f(g).
	\]
	Here, $m_P$ is defined by $m_P(nmk)=p$ for $n \in N(\bbA)$, $m \in M(\bbA)$ and $k \in K_\bbA$.
	By $\lambda \in \fraka_P^*$, the function $(m_P(g))^\lambda=\lambda(m_P(g))$ is well-defined.
	We call the function $f_\lambda$ constructed above a standard section.
	We define the Eisenstein series by
	\[
	E(g,\lambda,f)=\sum_{\gamma \in P(\bbQ)\bs G(\bbQ)} f_\lambda(\gamma g).
	\]
	Note that if $\sigma$ is cuspidal and $E(g,\lambda,f)$ is holomorphic at $s=s_0$, one has
	\[
	E(g,s_0,f) \in \calA(G)_{\{P\}}.
	\]
	The following theorem is due to Langlands \cite{langlands}.
	
	\begin{thm}\label{Eis_ser_conv}
	Let $P=MN$ be a $\bbQ$-parabolic subgroup of $G$ and $(\sigma,V)$ an automorphic subrepresentation of $\calA(M)$ such that $\sigma$ is $A_P^\infty$-invariant and any automorphic form in $V$ is square-integrable.
	Then, $E(s,\lambda,f)$ converges absolutely and uniformly on a compact set in $g$ if $\mathrm{Re}(\langle \lambda - \rho_P, \alpha\rangle)>0$ for any root $\alpha$ in $N$ relative to $A_P^\infty$.
	Moreover, $E(g,\lambda,f)$ can be meromorphically continued to $\fraka_P^*$.
	\end{thm}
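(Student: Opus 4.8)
The plan is to follow Langlands's original argument (organized in the cuspidal/square-integrable framework of \cite{MW}), treating the two assertions separately. For the convergence I would argue by majorization. Since every automorphic form in $V$ is square-integrable, the associated vector $m\mapsto m^{-\rho_P}f(mg)$ on $M(\bbA)^1$ is bounded on a Siegel domain (square-integrable automorphic forms lie in the discrete spectrum and are bounded). Writing $g=nmk$ via the Iwasawa decomposition adapted to $P$, one gets $|f_\lambda(\gamma g)|\le C\,H_P(\gamma g)^{\mathrm{Re}\,\lambda+\rho_P}$, where $H_P$ is the height attached to $A_P^\infty$. The resulting majorant $\sum_{\gamma\in P(\bbQ)\bs G(\bbQ)}H_P(\gamma g)^{\mathrm{Re}\,\lambda+\rho_P}$ is a Godement-type height series, and by reduction theory together with the lattice-point count in $P(\bbQ)\bs G(\bbQ)$ it converges absolutely and locally uniformly in $g$ precisely on the cone $\mathrm{Re}\langle\lambda-\rho_P,\alpha\rangle>0$ for all roots $\alpha$ of $A_P^\infty$ in $N$. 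This gives the first assertion.

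The substantive part is the meromorphic continuation, where I would run Langlands's induction on the split rank of $P$, i.e.\ on $\dim\fraka_P$. The engine is the constant-term computation: along any standard parabolic $Q$, the constant term $E(g,\lambda,f)_Q$ is a finite sum of intertwining integrals $M(w,\lambda)f$ indexed by the Weyl elements $w$ with $wMw^{-1}$ a Levi of $Q$, and each such term is again an induced section of lower rank twisted by $w\lambda$. Because $E(g,\lambda,f)$ and all of its constant terms satisfy the same $\calZ$-eigenvalue system, the analytic behaviour of $E$ is governed by that of these intertwining operators, and the functional equations $E(g,\lambda,f)=E(g,w\lambda,M(w,\lambda)f)$ allow one to propagate analyticity across the walls of the Weyl chambers once it is known on a single chamber.

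The crux, and the step I expect to be hardest, is the base case in which $P$ is maximal, so $\dim\fraka_P=1$ and $\lambda$ is a single complex parameter. Here the continuation cannot be obtained by the formal manipulations above; a genuine functional-analytic input is required. Following Langlands, I would form the truncation $\Lambda^T E(g,\lambda,f)$, use the Maass--Selberg relations to express and control its $L^2$-norm as a meromorphic function of $\lambda$, and combine this with the spectral theory of the Casimir (and the $\calZ$-eigensystem) to continue $E$ first to the closure of the cone of convergence, then across the unitary axis, and finally to all of $\fraka_P^*$ with poles confined to explicit hyperplanes. The square-integrability hypothesis on $\sigma$ is exactly what makes the relevant inner products finite and the truncation estimates valid. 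With the rank-one case established, the inductive step reconstitutes the general case from the constant-term and functional-equation formalism, yielding the claimed meromorphic continuation of $E(g,\lambda,f)$ to all of $\fraka_P^*$.
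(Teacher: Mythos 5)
The paper itself gives no proof of Theorem \ref{Eis_ser_conv}: it is stated with an attribution to Langlands \cite{langlands} (see also \cite{MW}), so your proposal has to be measured against that classical argument. Your overall architecture --- Godement-type majorization for the convergence, then Langlands's induction on the split rank via constant terms, functional equations, truncation and Maass--Selberg relations for the continuation --- is indeed the architecture of the cited proof.

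There is, however, a genuine gap, located exactly where the hypothesis of the theorem is weaker than cuspidality. You justify the majorization $|f_\lambda(\gamma g)|\le C\,H_P(\gamma g)^{\mathrm{Re}\,\lambda+\rho_P}$ by asserting that square-integrable automorphic forms on $M(\bbA)^1$ are bounded. That is false in general: cusp forms are rapidly decreasing, hence bounded, but a residual square-integrable form need not be. By Langlands's square-integrability criterion its cuspidal exponents $\mu$ along a parabolic $Q$ of $M$ lie only in the open negative cone, so on a Siegel set of $M$ the form is merely $O\bigl(a^{\rho_Q+\mathrm{Re}\,\mu}(1+\|\log a\|)^N\bigr)$ with $\mathrm{Re}\,\mu<0$, which can be unbounded; the Speh residual forms on $\GL_4(\bbA)^1$, which grow like a positive power of $|\det a/\det b|$ in the relevant direction, are a standard example. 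With boundedness gone, your majorant no longer dominates the series, and the naive repair --- inserting the growth bound and unfolding to a Godement series for the smaller parabolic $QN_P$ --- fails, because the required positivity breaks on the roots lying inside $M$, where $\mathrm{Re}\,\lambda$ vanishes. The arguments in the literature deal with this either by a finer majorization of the square-integrable form by a minimal-parabolic Eisenstein series of $M$ taken inside its own cone of convergence, or, as Langlands and \cite{MW} in effect do, by realizing a square-integrable $\sigma$ as an iterated residue of cuspidal Eisenstein series on $M$, so that $E(g,\lambda,f)$ is itself an iterated residue (in the auxiliary variables) of a cuspidal-data Eisenstein series attached to a parabolic $R\subset P$; both the convergence and the meromorphic continuation are then inherited from the cuspidal case, which is the case where your majorization and your truncation/Maass--Selberg sketch genuinely apply. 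As written, your proof is complete only under the stronger hypothesis that $\sigma$ is cuspidal (which, incidentally, covers the applications made of the theorem later in the paper).
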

	
	Next we consider the constant terms of Eisenstein series.
	For a standard section $f_\lambda$ and an element $w$ of the Weyl group, set
	\[
	M_{w,\lambda} f_\lambda(g) = \int_{N\cap wNw^{-1} \bs N}f_\lambda(w^{-1}ng)\, dn
	\]
	if the integral converges.
	The integral is called the intertwining operator.
	This converges absolutely for some half plane and is meromorphically continued to a whole space $\fraka_P^*$.
	The constant terms of Eisenstein series can be described by intertwining operators $M_{w,\lambda}$ as follows:
	
	\begin{thm}\label{const_term_Eis_ser}
	Let $P=MN$ and $P'=M'N'$ be parabolic subgroups of $G$.
	Let $\sigma$ be an automorphic cuspidal representation of $M(\bbA)$ on $\calA_\cusp(A_P^\infty \bs M)$.
	For a standard section $f_\lambda$ of $\Ind_{P(\bbA)}^{G(\bbA)}(\sigma \otimes \lambda)$, let $E(g,\lambda,f)_{P'}$ be the constant term along $P'$.
	Put
	    \[
	    W(M,M')=
	    \left\{w \in W \,\middle|\, \begin{matrix}\text{$w^{-1}(\alpha)>0$ for any root $\alpha$ in $M'$ relative to $L_0$,}\\
	    \text{and $wMw^{-1}$ is a standard Levi subgroup of $M'$}
	    \end{matrix}\right\}.
	    \]
	    We then have
	    \[
	    E(g,\lambda,f)_{P'} = \sum_{w \in W(M,M')} \sum_{\gamma \in M'(\bbQ)\cap P_0(\bbQ) \bs M'(\bbQ)} M_wf_\lambda(\gamma g).
	    \]
	\end{thm}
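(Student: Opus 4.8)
The plan is to reproduce Langlands' unfolding of the constant term, organized by the Bruhat decomposition. First I would insert the defining series into the constant-term integral to obtain
\[
E(g,\lambda,f)_{P'} = \int_{N'(\bbQ) \bs N'(\bbA)} \sum_{\gamma \in P(\bbQ) \bs G(\bbQ)} f_\lambda(\gamma n g)\, dn.
\]
This manipulation is legitimate where the Eisenstein series converges absolutely, i.e. for $\real(\langle \lambda - \rho_P, \alpha \rangle) > 0$ by Theorem \ref{Eis_ser_conv}; the resulting identity of meromorphic functions then propagates to all of $\fraka_P^*$ by analytic continuation. Next I would invoke the Bruhat decomposition to write $G(\bbQ)$ as a disjoint union of $P(\bbQ)$-$P'(\bbQ)$ double cosets, indexed by a fixed set of representatives $w$ of $W_M \bs W / W_{M'}$, where $W_M$ and $W_{M'}$ are the Weyl groups of $M$ and $M'$. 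Each double coset yields $P(\bbQ) \bs P(\bbQ) w P'(\bbQ) \cong (w^{-1} P(\bbQ) w \cap P'(\bbQ)) \bs P'(\bbQ)$, so after interchanging sum and integral (Fubini, justified in the convergence range) the contribution of $w$ becomes a sum over $\delta \in (w^{-1} P w \cap P')(\bbQ) \bs P'(\bbQ)$ of $\int_{N'(\bbQ)\bs N'(\bbA)} f_\lambda(w\delta n g)\, dn$.

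The crux is to evaluate the inner integral for each $w$ and to identify which double cosets survive. Writing $P' = M'N'$ and separating the unipotent part of the $\delta$-sum from the Levi part, the portion of $N'(\bbA)$ that is \emph{not} absorbed by the coset representatives reassembles, together with the integration variable, into an integral of $f_\lambda$ over a full adelic unipotent group. I expect the decisive dichotomy to be the following: if $w M w^{-1} \not\subseteq M'$, this unipotent group contains a conjugate of the unipotent radical of a proper parabolic of $M$, so the integral computes a constant term of the cusp form generating $\sigma$ along a proper parabolic subgroup of $M$ and therefore vanishes by the cuspidality of $\sigma$; if instead $wMw^{-1}$ is a standard Levi of $M'$ and $w^{-1}\alpha > 0$ for all roots $\alpha$ of $M'$, i.e. $w \in W(M,M')$, the same integral collapses exactly to the intertwining integral defining $M_{w,\lambda} f_\lambda$, the surviving unipotent integration matching its domain of definition. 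The positivity condition cutting out $W(M,M')$ is precisely what guarantees that no collapse occurs prematurely and that the remaining unipotent directions coincide with those appearing in $M_{w,\lambda}$.

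Finally, for each surviving $w \in W(M,M')$ the residual Levi part of the $\delta$-sum reduces to a sum over $M'(\bbQ)\cap P_0(\bbQ) \bs M'(\bbQ)$: these index the relative Bruhat cells inside $M'$ that remain once the unipotent directions have been integrated out, and translating the argument of $M_{w,\lambda} f_\lambda$ by such $\gamma$ reconstitutes the entire inner sum. Assembling the contributions of all $w$ then gives
\[
E(g,\lambda,f)_{P'} = \sum_{w \in W(M,M')} \sum_{\gamma \in M'(\bbQ)\cap P_0(\bbQ) \bs M'(\bbQ)} M_{w,\lambda} f_\lambda(\gamma g),
\]
as claimed. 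The main obstacle is the bookkeeping of the second paragraph: one must track the Haar-measure normalizations and the precise unipotent subgroups, such as $w^{-1}Nw \cap N'$ and $N' \cap wNw^{-1}$, carefully enough both to confirm that the cuspidality vanishing applies exactly off $W(M,M')$ and to verify that the surviving integral is literally the operator $M_{w,\lambda}$ (up to the $w$-versus-$w^{-1}$ convention) rather than a twist of it. Throughout, the computation is carried out in the domain of absolute convergence and extended to all $\lambda$ by the meromorphic continuation furnished by Theorem \ref{Eis_ser_conv}.
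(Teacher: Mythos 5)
The paper itself contains no proof of this theorem: it is quoted as a standard result of the theory of Eisenstein series (due to Langlands; see Moeglin--Waldspurger, Proposition II.1.7), so there is no in-paper argument to compare yours against. Your proposal is precisely the standard proof of that result: insert the series into the constant-term integral in the range of absolute convergence given by Theorem \ref{Eis_ser_conv}, decompose $G(\bbQ)$ into $P(\bbQ)$--$P'(\bbQ)$ double cosets indexed by $W_M\bs W/W_{M'}$, kill all cosets whose (suitably chosen, minimal-length) representative $w$ does not satisfy $wMw^{-1}\subseteq M'$ by cuspidality of $\sigma$, and identify the surviving unipotent integral with the intertwining operator $M_{w,\lambda}$. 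That skeleton is sound, and your identification of the cuspidality dichotomy as the crux is correct.

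The step that does not hold up is the last one. After the unipotent directions are absorbed into the integral, the residual Levi part of the $\delta$-sum for a surviving $w$ runs over $(wPw^{-1}\cap M')(\bbQ)\bs M'(\bbQ)$, the rational points of the standard parabolic subgroup of $M'$ with Levi $wMw^{-1}$; i.e.\ the contribution of $w$ is an Eisenstein series on $M'$ attached to that parabolic. Your claim that this ``reduces to'' a sum over $M'(\bbQ)\cap P_0(\bbQ)\bs M'(\bbQ)$ is not justified, and it is in fact false unless $M=L_0$: the function $M_{w,\lambda}f_\lambda(\cdot\, g)$ is left invariant under $(wPw^{-1}\cap M')(\bbQ)$, which contains $M'(\bbQ)\cap P_0(\bbQ)$ with infinite index whenever $wMw^{-1}\neq L_0$, so the sum as printed would repeat each term infinitely often and diverge. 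This is really a defect of the theorem as stated rather than of your strategy: in its applications the paper uses the corrected form --- in the proof of Theorem \ref{II_main_thm_P_J} (where $P=P'=P_J$, so $wMw^{-1}=M'$ and the parabolic is all of $M'$) the constant term is written as $f_\lambda+M_{w,\lambda}f_\lambda$ with no inner sum, and in Theorem \ref{II_main_thm_P_0} the two descriptions agree because $M=L_0$. So keep your argument, but let the unfolding tell you the answer: state the inner sum over $(wPw^{-1}\cap M')(\bbQ)\bs M'(\bbQ)$ instead of forcing it into the form printed in the statement.
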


\subsection{Whittaker functions for holomorphic automorphic forms on $\GL_2$ and $\SL_2$}
	
	We consider the space and Whittaker functions of automorphic forms on $\SL_2(\bbA)$ or $\GL_2(\bbA)$ that generate (holomorphic) discrete series representations at the archimedean place.
	For $H=\SL_2$ or $\GL_2$, let $\calA(A_H^\infty \bs H)_k^{+}$ be the isotypic component of (holomorphic) discrete series representation $\calD_k^+$ of weight $k$ in $\calA(A_H^\infty \bs H)$ and $\calA(\SL_2)^{-}_k$ the isotypic component of anti-holomorphic discrete series representation $\calD_k^{-}$.
	Here, $A_H^\infty$ is the identity component of the center of $H(\bbR)$ in the real topology.
	Let $N$ be the upper unipotent subgroup of $\SL_2$.
	For $h \in \bbZ$ and an automorphic form $\varphi$, put
	\[
	\Wh_h(\varphi)(g) = \int_{N(\bbQ) \bs N(\bbA)} \varphi(ng) \overline{\psi(hn)}\, dn.
	\]
	We then have
	\[
	\varphi (g) = \sum_{h \in \bbZ} \Wh_h(\varphi)(g).
	\]
	
	\begin{lem}\label{triv_Wh}
	Let $\varphi$ be an automorphic form on $\SL_2(\bbA)$.
	If the Whittaker functions $\Wh_h(\varphi)$ are zero for any $h \neq 0$, then $\varphi$ is a constant function.
	\end{lem}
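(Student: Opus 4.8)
The plan is to show that the hypothesis forces $\varphi$ to be left-invariant under all of $\SL_2(\bbA)$, whence constant. First I would feed the hypothesis into the Fourier expansion displayed just above the statement: since $\Wh_h(\varphi)=0$ for every $h\neq 0$, we get $\varphi=\Wh_0(\varphi)$, which is exactly the constant term of $\varphi$ along $N$. Thus $\varphi$ equals its own constant term along $N$, so it is left $N(\bbA)$-invariant: for $n_0\in N(\bbA)$ the substitution $n\mapsto nn_0$ in $\int_{N(\bbQ)\bs N(\bbA)}\varphi(nn_0g)\,dn$ leaves the integral unchanged. Hence $\varphi$ is simultaneously left-invariant under $\SL_2(\bbQ)$ (being an automorphic form) and under $N(\bbA)$.

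Next I would upgrade this to invariance under the opposite (lower-triangular) unipotent subgroup $\overline{N}$. Let $w=\left(\begin{smallmatrix}0&-1\\1&0\end{smallmatrix}\right)\in\SL_2(\bbQ)$, so that $w^{-1}Nw=\overline{N}$. Writing $\overline{n}=w^{-1}nw$ with $n\in N(\bbA)$, the two invariances combine as
\[
\varphi(\overline{n}g)=\varphi(w^{-1}nwg)=\varphi(nwg)=\varphi(wg)=\varphi(g),
\]
where the middle equalities use left $\SL_2(\bbQ)$-invariance (for $w^{-1}$) and left $N(\bbA)$-invariance (for $n$), and the last uses $w\in\SL_2(\bbQ)$ once more. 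Therefore $\varphi$ is left $\overline{N}(\bbA)$-invariant as well.

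Then I want to conclude that invariance under $N(\bbA)$ and $\overline{N}(\bbA)$ already forces left $\SL_2(\bbA)$-invariance. For each place $v$, $\varphi$ is left-invariant under $N(\bbQ_v)$ and $\overline{N}(\bbQ_v)$, and since $\SL_2$ of any field is generated by its upper and lower unipotent subgroups (Gauss/Bruhat decomposition), $\varphi$ is left $\SL_2(\bbQ_v)$-invariant for every $v$; in particular it is left $\SL_2(\bbR)$-invariant and left $\SL_2(\bbZ_p)$-invariant for each finite $p$. To assemble the finite places into $\SL_2(\bbA_\fini)$, I would factor a given element as a part supported on the finite set $S$ where it leaves $\SL_2(\bbZ_p)$ (which lies in the subgroup generated by finitely many $\SL_2(\bbQ_p)$, so invariance is immediate) times an element of the compact group $\prod_{p\notin S}\SL_2(\bbZ_p)$; on the latter, invariance holds on the dense subgroup of finitely supported sequences, hence everywhere since the stabilizer is closed and $\varphi$ is continuous. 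Combining with the archimedean factor gives left $\SL_2(\bbA)$-invariance, i.e.\ $\varphi$ is constant.

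The main obstacle here is conceptual rather than computational: the argument works only because $\varphi$ is genuinely left $\SL_2(\bbQ)$-invariant, and it is precisely this that powers the passage from $N(\bbA)$- to $\overline{N}(\bbA)$-invariance via $w$. This is also the reason the constant term of a nontrivial Eisenstein series is not a counterexample, even though its nonzero Whittaker coefficients vanish: such a constant term is only left $N(\bbA)$- and torus-invariant, not $\SL_2(\bbQ)$-invariant. The sole genuinely analytic point is the final density step gluing the finite places, for which I would invoke that an automorphic form is smooth, hence continuous, on $\SL_2(\bbA)$.
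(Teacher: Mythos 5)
Your proof is correct and follows essentially the same route as the paper: the Fourier expansion forces $\varphi=\Wh_0(\varphi)$, hence left $N(\bbA)$-invariance, which together with left $\SL_2(\bbQ)$-invariance gives invariance under all of $\SL_2(\bbA)$. Your intermediate steps (conjugating by $w$ to get $\overline{N}(\bbA)$-invariance, local generation by unipotents, and the density/continuity gluing) are just a detailed justification of the paper's one-line claim that $\SL_2(\bbA)$ is generated by $\SL_2(\bbQ)$ and $N(\bbA)$; in fact this generation even holds algebraically, since every element of $\SL_2(\bbQ_v)$ (and of $\SL_2(\bbZ_v)$, with integral entries) is a product of at most four elementary matrices, so the topological closure argument, while valid, is not strictly needed.
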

	\begin{proof}
	By the assumption, we have
	\[
	\varphi(ng)=\sum_h \Wh_h(\varphi)(ng)=\Wh_0(\varphi)(g), \qquad n\in N(\bbA), g \in \SL_2(\bbA).
	\]
	Thus, the automorphic form $\varphi$ is left $\SL_2(\bbQ)$ and $N(\bbA)$-invariant.
	Since $\SL_2(\bbA)$ is generated by $\SL_2(\bbQ)$ and $N(\bbA)$, the automorphic form $\varphi$ is left $\SL_2(\bbA)$-invariant. 
	\end{proof}
	
	\begin{lem}\label{Wh_hol_sl}
	For $k \in \bbZ$, let $\varphi$ be an automorphic form of weight $k$ such that $\Wh_h(\varphi)$ generates the holomorphic discrete series representation of weight $k$ as $(\mathfrak{sl}_2(\R),\SO_2(\R))$-modules for any $h >0$ and $\Wh_h(\varphi) = 0$ for any $h<0$.
	Then, $\varphi$ is a holomorphic automorphic form of weight $k$ if $k>2$.
	\end{lem}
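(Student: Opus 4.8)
The plan is to show that $\varphi$ is annihilated by the lowering operator $V^-$ of $\mathfrak{sl}_2(\R)$, which is exactly the Cauchy--Riemann (holomorphy) condition; combined with the weight-$k$ hypothesis this forces $\varphi$ to be a lowest weight vector generating a holomorphic discrete series, hence a holomorphic automorphic form.

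First I would expand $\varphi = \sum_{h \in \bbZ}\Wh_h(\varphi)$, where by hypothesis only the terms with $h \ge 0$ survive since $\Wh_h(\varphi)=0$ for $h<0$. The crucial structural observation is that the right translation operator $R(V^-)$ commutes with the left translation by $N(\bbA)$ that defines $\Wh_h$, so $\Wh_h(R(V^-)\varphi) = R(V^-)\Wh_h(\varphi)$ for every $h$; moreover $R(V^-)\varphi$ is again an automorphic form, the space of automorphic forms being stable under the $(\mathfrak{sl}_2(\R),\SO_2(\R))$-action.

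Next I would evaluate $R(V^-)\Wh_h(\varphi)$ for each $h\neq 0$. For $h<0$ it vanishes because $\Wh_h(\varphi)=0$. For $h>0$, the function $\Wh_h(\varphi)$ has $\SO(2)$-weight $k$ (since $\Wh_h$ commutes with right translation and $\varphi$ has weight $k$) and generates $\cD_k^+$; as the weight-$k$ subspace of $\cD_k^+$ is one-dimensional and spanned by the lowest weight vector $w_k$, with $\cD_k^+(V^-)w_k=0$, we obtain $R(V^-)\Wh_h(\varphi)=0$. Hence $\Wh_h(R(V^-)\varphi)=0$ for all $h\neq 0$, and by Lemma \ref{triv_Wh} the automorphic form $R(V^-)\varphi$ must be constant. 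On the other hand $R(V^-)\varphi$ has $\SO(2)$-weight $k-2$, whereas a nonzero constant function has weight $0$; since $k>2$ forces $k-2\neq 0$, we conclude $R(V^-)\varphi=0$. Thus $\varphi$ is a weight-$k$ vector killed by $V^-$, i.e.\ a lowest weight vector generating $\cD_k^+$, so $\varphi$ is a holomorphic automorphic form of weight $k$.

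The only delicate point is the weight bookkeeping in the final step: everything through Lemma \ref{triv_Wh} yields merely that $R(V^-)\varphi$ is \emph{constant}, and it is precisely the strict inequality $k>2$ (equivalently $k-2\neq 0$) that upgrades this to $R(V^-)\varphi=0$. For $k=2$ the weight of $R(V^-)\varphi$ would be $0$, the constant could be nonzero, and the holomorphy conclusion genuinely breaks down, which explains the hypothesis $k>2$.
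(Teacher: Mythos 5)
Your proof is correct and takes essentially the same route as the paper's: the paper likewise applies a weight-lowering operator $X$ to $\varphi$, observes that each Whittaker coefficient $\Wh_h(\varphi)$ is annihilated by it (being the extreme weight-$k$ vector of $\cD_k^+$ for $h>0$ and zero for $h<0$), invokes Lemma \ref{triv_Wh} to conclude that $X\cdot\varphi$ is constant, and uses $k>2$ (i.e.\ $k-2\neq 0$) to force $X\cdot\varphi=0$, hence holomorphy. Your write-up only makes explicit what the paper leaves implicit (the commutation of $R(V^-)$ with the Whittaker integral and the weight bookkeeping on constants), and correctly identifies the weight-$k$ vector as the \emph{lowest} weight vector of $\cD_k^+$ where the paper loosely writes ``highest weight vector.''
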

	\begin{proof}
	Let $X$ be a weight lowering operator.
	Then, $X\cdot \varphi$ is of weight $k-2$ and satisfies the condition as in Lemma \ref{triv_Wh}.
	In fact, since the Whittaker functions $\Wh_h(\varphi)$ are of weight $k$, the function $\Wh_h(\varphi)$ must be a highest weight vector of $\calD_k^+$.
	Thus, we have $X\cdot\varphi=0$ by $k>2$.
	By definition, $\varphi$ is holomorphic.
	\end{proof}
	
	By the complex conjugate, we obtain the following:
	\begin{lem}\label{Wh_anti_hol_sl}
	For $k \in \bbZ$, let $\varphi$ be an automorphic form such that $\Wh_h(\varphi)$ generates the anti-holomorphic discrete series representation of weight $k$ as $(\mathfrak{sl}_2(\bbR),\SO_2(\bbR))$-modules for any $h < 0$ and $\Wh_h(\varphi) = 0$ for any $h>0$.
	Then, $\varphi$ is an anti-holomorphic automorphic form of weight $k$ if $k < -2$.
	\end{lem}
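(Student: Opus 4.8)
The plan is to deduce the statement from Lemma \ref{Wh_hol_sl} by complex conjugation, which is precisely the reduction indicated by the phrase preceding the lemma. First I would record the conjugation identity for Whittaker coefficients: from $\Wh_h(\varphi)(g)=\int_{N(\bbQ) \bs N(\bbA)}\varphi(ng)\overline{\psi(hn)}\,dn$ one reads off $\overline{\Wh_h(\varphi)}=\Wh_{-h}(\overline{\varphi})$, where $\overline{\varphi}$ denotes the complex conjugate of $\varphi$. This single identity converts the hypotheses imposed on $\varphi$ into exactly the hypotheses of Lemma \ref{Wh_hol_sl} for $\overline{\varphi}$, and that is the whole strategy.

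Concretely, the vanishing $\Wh_h(\varphi)=0$ for $h>0$ becomes $\Wh_{h'}(\overline{\varphi})=0$ for all $h'<0$; and for $h<0$, where $\Wh_h(\varphi)$ generates the anti-holomorphic discrete series of weight $k$, the conjugate $\overline{\Wh_h(\varphi)}=\Wh_{-h}(\overline{\varphi})$ generates the complex conjugate representation, namely the holomorphic discrete series of weight $-k$. Hence $\Wh_{h'}(\overline{\varphi})$ generates the holomorphic discrete series of weight $-k$ for all $h'>0$. Since $\varphi$ has weight $k$, the form $\overline{\varphi}$ has weight $-k$, and the hypothesis $k<-2$ is exactly the condition $-k>2$. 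Applying Lemma \ref{Wh_hol_sl} to $\overline{\varphi}$ then shows that $\overline{\varphi}$ is a holomorphic automorphic form of weight $-k$, so that $\varphi$ is anti-holomorphic of weight $k$, as claimed.

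If one prefers a self-contained argument rather than an appeal to conjugation, I would mirror the proof of Lemma \ref{Wh_hol_sl} verbatim: apply the weight raising operator $\overline{X}$ (the conjugate of the lowering operator $X$) to $\varphi$. Because the weight $k$ vector is the extremal (highest weight) vector of the anti-holomorphic discrete series, $\overline{X}$ annihilates it, so $\Wh_h(\overline{X}\varphi)=0$ for $h<0$; combined with the vanishing for $h>0$, Lemma \ref{triv_Wh} forces $\overline{X}\varphi$ to be a constant, and as it has weight $k+2\ne 0$ it must vanish. The equation $\overline{X}\varphi=0$ is the anti-holomorphy condition, giving the conclusion. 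I expect no genuine analytic obstacle here: the entire content is the bookkeeping of two sign flips, namely that conjugation reverses the frequency $h\mapsto -h$ and the weight $k\mapsto -k$, together with the observation that the strict bound $k<-2$ is exactly what makes $-k>2$ (equivalently $k+2\ne 0$), so that either Lemma \ref{Wh_hol_sl} applies or the constant produced by Lemma \ref{triv_Wh} is forced to be zero.
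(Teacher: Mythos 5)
Your proposal is correct and takes essentially the same approach as the paper, whose entire proof of this lemma is the single line ``Take the complex conjugate in the proof of Lemma \ref{Wh_hol_sl}.'' Both of your variants---conjugating $\varphi$ and invoking Lemma \ref{Wh_hol_sl} as a black box (with the bookkeeping $h\mapsto -h$, $k\mapsto -k$), or mirroring that proof directly with the raising operator $\overline{X}$ and Lemma \ref{triv_Wh}---are faithful expansions of exactly that one-line argument.
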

	\begin{proof}
	Take the complex conjugate in the proof of Lemma \ref{Wh_hol_sl}.
	\end{proof}
	
	We compute the structures of $\calA(\GL_2)_k,  \calA(\SL_2)_k^{+}$, and $\calA(\SL_2)_k^{-}$.
	Let $B$ (resp.~$B_\GL$) be the upper triangle subgroup of $\SL_2$ (resp.~$\GL_2$).
	
	\begin{prop}\label{prop_sl_gl}
	Take a weight $k \in \bbZ_{>0}$.
	\begin{enumerate}
	    \item 
	If $k\geq 3$ and $H=\SL_2$, one has
	\[
	\calA(H)_{k}^\pm
	\isom
	\bigoplus_\pi \pi \oplus \bigoplus_{\mu} \left(\bigotimes_{v < \infty}\Ind_{B(\bbQ_v)}^{\SL_2(\bbQ_v)}\left(\mu_v|\cdot|^{k-1}\right)\right)\otimes\calD^{\pm}_k,
	\]
	where $\mu$ runs over all Hecke characters with $\mu_\infty=\sgn^{k}$ and $\pi$ runs over all irreducible cuspidal automorphic representations of $\SL_2(\bbA)$. 
        \item 
	If $k\geq 3$ and $H=\GL_2$, one has
	\[
	\calA(A_H^\infty \bs H)_{k}
	\isom
	\bigoplus_\pi \pi \oplus \bigoplus_{\mu_1,\mu_2} \left(\bigotimes_{v < \infty}\Ind_{B_{\GL}(\bbQ_v)}^{\GL_2(\bbQ_v)}\left(\mu_{1,v}|\cdot|^{(k-1)/2}\boxtimes\mu_{2,v}|\cdot|^{-(k-1)/2}\right)\right)\otimes\calD_k,
	\]
	where $\pi$ runs over all irreducible cuspidal automorphic representation in $\calA_\cusp(A_H^\infty\bs H)$ and $\mu_i$ runs over all Hecke characters with $\mu_{1,\infty} \mu_{2,\infty}^{-1}=\sgn^{k}$.
	\end{enumerate}
	\end{prop}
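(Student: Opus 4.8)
The plan is to combine the cuspidal-support decomposition of Theorem~\ref{cusp_supp_decomp} with the theory of Eisenstein series. For $H=\SL_2$ or $\GL_2$ the only standard Levi subgroups are $H$ itself and the diagonal torus $T$, the Levi of the Borel $B$ (resp.\ $B_\GL$), so Theorem~\ref{cusp_supp_decomp} gives
\[
\calA(H) = \calA_\cusp(H) \oplus \bigoplus_{\mu} \calA(H)_{(T,\mu)},
\]
where $\mu$ runs over Hecke characters of $T(\bbA)$. Passing to the $\calD_k^\pm$-isotypic component at the archimedean place, the cuspidal summand becomes, by definition, $\bigoplus_\pi \pi$ with $\pi$ ranging over the irreducible cuspidal automorphic representations whose archimedean component is $\calD_k^\pm$ (resp.\ $\calD_k$ for $\GL_2$). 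This is exactly the first term of the assertion, so it remains to treat the Borel part.

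First I would realize the spaces $\calA(H)_{(T,\mu)}$ through Eisenstein series: for a section $f$ in the global induced representation and a parameter $\lambda\in\fraka_B^*$, Theorem~\ref{Eis_ser_conv} produces $E(g,\lambda,f)$, and Theorem~\ref{const_term_Eis_ser} shows its constant term along $B$ lies in the induced representation. The archimedean component of $E(\cdot,\lambda,f)$ is generated inside the local principal series $\Ind_{B(\bbR)}^{H(\bbR)}(\mu_\infty|\cdot|^{\lambda})$, so for it to meet the $\calD_k^\pm$-isotypic component we need $\calD_k^\pm$ to occur in this local principal series.

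The decisive archimedean input is that the (anti)holomorphic discrete series $\calD_k^\pm$ of $\SL_2(\bbR)$ occurs as an irreducible subrepresentation of $\Ind_{B(\bbR)}^{\SL_2(\bbR)}(\sgn^k|\cdot|^{k-1})$ at its reducibility point; concretely, by Lemma~\ref{triv_Wh_arch} the corresponding $\SO(2)$-extremal Whittaker vector $y^{k/2}\exp(-2\pi|m|y)$ lives there. This pins down the sign condition $\mu_\infty=\sgn^k$ and the inducing parameter $k-1$ (and for $\GL_2$ the condition $\mu_{1,\infty}\mu_{2,\infty}^{-1}=\sgn^k$ with exponents $\pm(k-1)/2$). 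Here the hypothesis $k\ge 3$ enters: it places the parameter $k-1$ strictly inside the region of absolute convergence of Theorem~\ref{Eis_ser_conv}, so $E(\cdot,\lambda,f)$ is holomorphic (pole-free) there and $\calD_k^\pm$ sits as the socle, i.e.\ a subrepresentation rather than a quotient. This is the analytic analogue of the hypothesis $k>2$ in Lemma~\ref{Wh_hol_sl}.

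The main obstacle will be to upgrade this into the stated isomorphism, i.e.\ to show that $f\mapsto E(\cdot,\lambda,f)$ induces a bijection from $\bigl(\bigotimes_{v<\infty}\Ind_{B(\bbQ_v)}^{\SL_2(\bbQ_v)}(\mu_v|\cdot|^{k-1})\bigr)\otimes\calD_k^\pm$ onto the Borel part of the $k$-isotypic space. For injectivity I would examine the constant term along $B$: by Theorem~\ref{const_term_Eis_ser} it equals $f_\lambda+M_{w,\lambda}f_\lambda$, whose two summands carry the distinct exponents $\lambda=k-1$ and $w\lambda=-(k-1)$; since $k-1\ne 0$ these cannot cancel, so $E(\cdot,\lambda,f)=0$ forces $f_\lambda=0$. (That $M_{w,\lambda}$ is holomorphic at this point again uses $k\ge 3$.) Surjectivity reduces to the fact that every automorphic form with cuspidal support $(T,\mu)$ and archimedean type $\calD_k^\pm$ is an Eisenstein series attached to a standard section, which is the content of the constant-term description of Theorem~\ref{const_term_Eis_ser}; crucially, the archimedean constraint imposes no condition at the finite places, so the full local induced representations $\Ind_{B(\bbQ_v)}^{\SL_2(\bbQ_v)}(\mu_v|\cdot|^{k-1})$ appear freely. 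The $\GL_2$ statement follows by the same argument carried out on $\calA(A_H^\infty\bs H)$, the passage to the quotient by the central split torus being exactly what converts the single sign/exponent condition for $\SL_2$ into the relative condition $\mu_{1,\infty}\mu_{2,\infty}^{-1}=\sgn^k$ with exponents $\pm(k-1)/2$.
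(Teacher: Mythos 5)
Your route is essentially the paper's argument run in the opposite direction: the paper proves the isomorphism by showing the constant term map $\varphi\mapsto\varphi_B$ is a bijection onto the induced space, while you propose to show the Eisenstein map $f\mapsto E(\cdot,k-1,f)$ is a bijection onto the Borel-supported part; the archimedean inputs (embedding of $\calD_k^\pm$ at the parameter $k-1$ with $\mu_\infty=\sgn^k$, absolute convergence from $k\ge 3$, holomorphy of the intertwining term) are the same, and your injectivity argument via the distinct exponents $\pm(k-1)$ is fine. The genuine gap is surjectivity. You assert that every automorphic form with cuspidal support $(T,\mu)$ and archimedean type $\calD_k^\pm$ is the value of an Eisenstein series at a standard section, and that this ``is the content of'' Theorem~\ref{const_term_Eis_ser}. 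It is not: that theorem only computes the constant term of an Eisenstein series you already have; it characterizes nothing. Nor does the cuspidal support by itself give what you want: the space $\calA(H)_{(T,\mu)}$ also contains, for instance, derivatives $\frac{d}{ds}E(g,s,f)\big|_{s=s_0}$, which are automorphic forms with the same cuspidal support whose constant terms carry polynomial (logarithmic) factors in the torus variable --- this is exactly the factor $\bbC[\fraka_P]$ in \cite[Lemma I.3.2]{MW} --- and which are not values of any standard section.

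What excludes such elements from the $\calD_k^\pm$-isotypic part, and what your proposal never establishes, is that the constant term of an \emph{arbitrary} $\varphi$ in that part is a pure exponent. This is where the paper uses Lemma~\ref{triv_Wh_arch} in its degenerate case $m=0$: since $\varphi$ generates $\calD_k^\pm$, the restriction $\varphi_B|_{\SL_2(\bbR)}$ is annihilated by the weight-lowering (resp.\ raising) operator, and this first-order equation forces $\varphi_B(\diag(a,a^{-1}))=Ca^{k}$ on $A_B^\infty$ --- no logarithms and no second exponent $a^{2-k}$. Only then can one regard $\varphi_B$ as an element of $\bigoplus_{\mu}\bigl(\bigotimes_{v<\infty}\Ind_{B(\bbQ_v)}^{\SL_2(\bbQ_v)}(\mu_v|\cdot|^{k-1})\bigr)\otimes\calD_k^\pm$, form $E(\cdot,k-1,\varphi_B)$, check (as you did for injectivity) that its constant term is again $\varphi_B$ because the intertwining term dies on the discrete-series part, and conclude that $\varphi-E(\cdot,k-1,\varphi_B)$ has vanishing constant term along $B$, hence is cuspidal, hence is zero since it lies in the Borel-supported (cusp-orthogonal) part. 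You invoke Lemma~\ref{triv_Wh_arch} only to locate the embedding of $\calD_k^\pm$ in the archimedean principal series; using it to control the constant term of a general element of the Borel part is the actual content of the proposition, and it cannot be delegated to Theorem~\ref{const_term_Eis_ser}.
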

	\begin{proof}
	Recall $\calA(A_H^\infty \bs H)_{\{H\}} = \calA_\cusp(A_H^\infty \bs H)$ and $A_{\SL_2}^\infty = 1$.
	Hence, for the first assertion, it suffices to show the isomorphism
	\[
	\calA(H)_{k}^\pm \cap \calA(H)_{\{B\}} \isom  \bigoplus_{\mu} \left(\bigotimes_{v<\infty}\Ind_{B(\bbQ_v)}^{\SL_2(\bbQ_v)}\left(\mu_v|\cdot|^{k-1}\right)\right) \otimes \calD^\pm_{k}.
	\]
	Take an automorphic form $\varphi \in \calA(H)^\pm_{k} \cap \calA(H)_{\{B\}}$.
	By Lemma \ref{triv_Wh_arch}, the constant term $\varphi_B$ along $B$ lies in the space
	\[
	\varphi_B \in \{f \in \calA(B \bs \SL_2) \mid \text{$f(\diag(a,a^{-1})) = a^k$ for any $a \in A_B^\infty$}\}.
	\]
	The right-hand side is canonically isomorphic to 
	\[
	\bigoplus_{\mu} \bigotimes_{v}\Ind_{B(\bbQ_v)}^{\SL_2(\bbQ_v)}\left(\mu_v|\cdot|^{k-1}\right),
	\]
	where $\mu$ runs over all Hecke characters.
	Since $\varphi$ generates $\calD_k^\pm$, the constant term $\varphi_{B}$ is so.
	Thus, we may regard $\varphi_B$ as an element of
	\begin{align*}
		\bigoplus_{\mu} 
		\left\{
		f \in \bigotimes_{v}\Ind_{B(\bbQ_v)}^{\SL_2(\bbQ_v)}\left(\mu_v|\cdot|^{k-1}\right) \,\middle|\, \text{$f$ generates $\calD_k^\pm$}
		\right\}
		=\bigoplus_{\mu, \mu_\infty=\sgn^k} \left(\bigotimes_{v < \infty}\Ind_{B(\bbQ_v)}^{\SL_2(\bbQ_v)}\left(\mu_v|\cdot|^{k-1}\right)\right)\otimes\calD^{\pm}_k.
	\end{align*}
	Conversely, take an element 
	\[
	\text{$f \in \left(\bigotimes_{v<\infty}\Ind_{B(\bbQ_v)}^{\SL_2(\bbQ_v)}\left(\mu_v|\cdot|^{k-1}\right)\right)\otimes\calD_k^{\pm}$ ~with~ $\mu_\infty=\sgn^k$}.
	\]
	Let $f_s$ be the standard section such that $f_{k-1}=f$.
	Then, the Eisenstein series $E(g,s,f)$ converges absolutely and the constant term $E(g,s,f)_B$ equals to $f$ at $s=k-1$.
	Thus, the constant term along $B$ induces the desired isomorphism.
	
	For the second assertion, take an automorphic form $\varphi \in \calA(A_{\GL_2}^\infty \bs \GL_2) \cap \calA(A_{\GL_2}^\infty \bs \GL_2)$.
	We apply Lemma \ref{triv_Wh_arch} to $\varphi|_{\SL_2(\bbR)}$.
	Since $\varphi$ generates the discrete series representation of weight $k$ as a $(\mathfrak{gl}_2(\bbR),\mathrm{O}_2(\bbR))$-module, one has
	\[
	\varphi_B|_{\SL_2(\bbR)} \in  \Ind_{B(\bbR)}^{\SL_2(\bbR)}\sgn^{k}|\cdot|^{k-1}.
	\]
	Hence, we obtain
	\[
	\varphi_B|_{\GL_2(\bbR)} \in \bigoplus_{\mu_1,\mu_2} \Ind_{B_{\GL}(\bbR)}^{\GL_2(\bbR)}\mu_1|\cdot|^{(k-1)/2}\boxtimes\mu_2|\cdot|^{-(k-1)/2}
	\]
	by the $A_H^\infty$-invariance of $\varphi$.
	Here, $\mu_1$ and $\mu_2$ runs over all Hecke characters with $\mu_{1,\infty}\mu_{2,\infty}^{-1}=\sgn^k$.
	We thus have
	\[
	\varphi_B \in \bigoplus_{\mu_1,\mu_2, \mu_{1,\infty}\mu_{2,\infty}^{-1}=\sgn^k} \left(\bigotimes_{v<\infty}\Ind_{B_{\GL}(\bbQ_v)}^{\GL_2(\bbQ_v)}\left(\mu_{1,v}|\cdot|^{(k-1)/2}\boxtimes\mu_{2,v}|\cdot|^{-(k-1)/2}\right)\right)\otimes\calD_k
	\]
	by the same reason discussed above.
	The converse is the same as the $\SL_2$-case.
	This completes the proof.
	\end{proof}

\subsection{Main theorem}
	
	For a parameter $\lambda \in \Xi_{II} \cup \Xi_{III}$, let $\calL_\lambda$ be the space of automorphic forms that generate the large discrete series representation $\calD_\lambda$.
	Put $\calL_{\lambda,\{P\}}=\calL_\lambda \cap \calA(G)_{\{P\}}$ and $\calL_{\lambda, (M,\pi)} = \calL_\lambda\cap \calA(G)_{(M,\pi)}$.
	We study the space $\calL_{\lambda, (M,\pi)}$ in terms of constant terms and induced representations.
	We first consider the case where $P=P_J$.
	
	\begin{thm}\label{II_main_thm_P_J}
	Let $P=P_J$ and $\lambda = (\lambda_1, \lambda_2) \in \Xi_{II}$.
	Take a cuspidal datum $(M,\pi)$ such that $M=L_J$.
	We denote by $\pi = \mu \boxtimes \sigma$ the outer tensor product decomposition associated with $L_J(\bbA) = \GL_1(\bbA) \times \SL_2(\bbA)$.
	\begin{enumerate}
	    \item If $\calL_{\lambda, (M,\pi)} \neq 0$, the archimedean component of $\sigma$ is $\calD_{\lambda_1+1}^+$ or $\calD^+_{-\lambda_2+1}$.
	    Suppose that $\sigma_\infty = \calD^+_{\lambda_1+1}$ (resp.~$\sigma_\infty=\calD^+_{-\lambda_2}$).
	    We then have $\mu_\infty=\sgn^{\lambda_2}$ (resp.~$\mu_\infty=\sgn^{\lambda_1}$).
	    \item If $(\mu_\infty, \sigma_\infty)=(\sgn^{\lambda_2}, \calD_{\lambda_1+1}^+)$ and $-\lambda_2>2$, the constant term along $P$ induces the isomorphism
	    \[
	    \calL_{\lambda,P} \isom \left(\bigotimes_{v <\infty}\Ind_{P_J(\bbQ_v)}^{G(\bbQ_v)}\left(\mu_v|\cdot|^{-\lambda_2}\boxtimes \sigma_v \right)\right)\otimes \calD_\lambda.
	    \]
	    \item If $(\mu_\infty, \sigma_\infty)=(\sgn^{\lambda_1}, \calD_{-\lambda_2+1}^+)$ and $\lambda_1>2$, the constant term along $P$ induces the isomorphism
	    \[
	    \calL_{\lambda,P} \isom \left(\bigotimes_{v < \infty}\Ind_{P_J(\bbQ_v)}^{G(\bbQ_v)}\left(\mu_v|\cdot|^{\lambda_1}\boxtimes\sigma_v\right)\right) \otimes \calD_\lambda.
	    \]
	\end{enumerate}
	\end{thm}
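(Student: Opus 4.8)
The plan is to prove all three parts through a careful analysis of the constant term $\varphi_{P_J}$ along the Jacobi parabolic, feeding into the archimedean computations already available: the Fourier--Jacobi type spherical functions of Theorem~\ref{FJ-sphrical}, the embedding Lemma~\ref{emb_pj}, and the structure Lemma~\ref{Wh_jacobi}. The forward direction (computing $\varphi_{P_J}$) localizes the obstruction to the archimedean place, while the reverse direction (reconstructing $\varphi$) is built from Eisenstein series exactly as in Proposition~\ref{prop_sl_gl}, with convergence controlled by Theorem~\ref{Eis_ser_conv} and the constant term read off from Theorem~\ref{const_term_Eis_ser}.

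For part (1), suppose $\calL_{\lambda,(M,\pi)}\neq 0$ and pick a nonzero $\varphi$ in it. Since the cuspidal support is $(L_J,\pi)$ with $L_J$ proper, the constant term $\varphi_{P_J}$ is nonzero and its cuspidal part is $\pi$-isotypic. Restricting to the minimal $K$-type $\tl$ of $\calD_\lambda$ and to $A_J^\infty$, the archimedean component of $\varphi_{P_J}$ is a Fourier--Jacobi type spherical function of type $(\rho_{\sigma_\infty,0},\calD_\lambda;\tl)$. By Theorem~\ref{FJ-sphrical}(1) such a function is nonzero precisely when $\sigma_\infty=\calD^+_{\lambda_1+1}$ or $\calD^+_{-\lambda_2+1}$, which is the first claim. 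For the sign of $\mu_\infty$, the constant-term map provides a nonzero element of $\Hom_{(\frakg,K)}\!\big(\calD_\lambda,\Ind_{P_J}^G(\mu_\infty|\cdot|^{-\lambda_2}\boxtimes\sigma_\infty)\big)$ (resp.\ with $|\cdot|^{\lambda_1}$), so $\calD_\lambda$ embeds into this induced representation; Lemma~\ref{emb_pj}(1) then forces $\mu_\infty=\sgn^{\lambda_2}$ (resp.\ $\sgn^{\lambda_1}$). The shift between the exponent $-\lambda_2$ appearing here and the exponent $-\lambda_2+2$ of Lemma~\ref{Wh_jacobi} is exactly the half-sum $\rho_{P_J}$, whose $A_J^\infty$-weight is $2$, so the two are consistent once the normalization of the induction is accounted for.

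For parts (2) and (3) we show the constant term along $P_J$ is an isomorphism onto the stated space. First, for $\varphi\in\calL_{\lambda,P}$ with the prescribed archimedean data, Lemma~\ref{Wh_jacobi} identifies the $(\frakL_J,\SO(2))$-module generated by the archimedean part of $\varphi_{P_J}$ as the summand $|\cdot|^{-\lambda_2+2}\boxtimes\calD^+_{\lambda_1+1}$ (resp.\ $|\cdot|^{\lambda_1+2}\boxtimes\calD^+_{-\lambda_2+1}$); together with the finite-place freedom this places $\varphi_{P_J}$ in $\big(\bigotimes_{v<\infty}\Ind_{P_J(\bbQ_v)}^{G(\bbQ_v)}(\mu_v|\cdot|^{-\lambda_2}\boxtimes\sigma_v)\big)\otimes\calD_\lambda$. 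Conversely, given $f$ in this space, let $f_s$ be the standard section equal to $f$ at the point $|\cdot|^{-\lambda_2}$; since $-\lambda_2>2$ (resp.\ $\lambda_1>2$) exceeds the convergence threshold $\rho_{P_J}$, Theorem~\ref{Eis_ser_conv} shows the Eisenstein series $E(g,f)$ converges absolutely, and Theorem~\ref{const_term_Eis_ser} expresses its constant term as a finite sum $\sum_{w\in W(L_J,L_J)}M_w f$. Projecting to the $A_J^\infty$-exponent $-\lambda_2$ (resp.\ $\lambda_1$) isolates the $w=1$ term, so $E(g,f)_{P_J}$ recovers $f$ there, giving surjectivity.

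The main obstacle is injectivity of the constant-term map together with the exact recovery of $f$. For injectivity, suppose $\varphi\in\calL_{\lambda,P}=\calL_\lambda\cap\calA(G)_{\{P_J\}}$ has $\varphi_{P_J}=0$. Transitivity of constant terms gives $\varphi_{P_0}=0$, and since $\varphi\in\calA(G)_{\{P_J\}}$ the constant term $\varphi_{P_S}$ is orthogonal to all cusp forms on $\GL_2$ while having vanishing Borel constant term; an automorphic form on $\GL_2$ with both properties is zero, so $\varphi_{P_S}=0$ as well. Thus $\varphi$ is cuspidal, hence lies in $\calA(G)_{\{G\}}$, and the directness of the Langlands decomposition $\calA(G)=\bigoplus_{\{P\}}\calA(G)_{\{P\}}$ forces $\varphi=0$ because $\{G\}\neq\{P_J\}$. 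The remaining delicate point is that the archimedean factor of the image is the single copy $\calD_\lambda$ rather than the full archimedean induced representation: this is guaranteed by the multiplicity-one embedding from Lemma~\ref{emb_pj} (equivalently the one-dimensionality of the Fourier--Jacobi spherical functions in Theorem~\ref{FJ-sphrical}), which also ensures that the other intertwining terms $M_w f$ carry $A_J^\infty$-exponents distinct from $-\lambda_2$ (resp.\ $\lambda_1$), so that extracting this exponent recovers $f$ precisely and no spurious contributions survive.
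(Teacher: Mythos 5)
Your overall architecture matches the paper's: part (1) via Hirano's Fourier--Jacobi spherical functions (Theorem \ref{FJ-sphrical}) plus the ``only if'' direction of Lemma \ref{emb_pj}, and parts (2),(3) via an absolutely convergent Eisenstein series whose constant term is computed by Theorem \ref{const_term_Eis_ser}. Two of your deviations are fine and even useful: your injectivity argument (vanishing of $\varphi_{P_J}$ forces vanishing of $\varphi_{P_0}$ and $\varphi_{P_S}$, hence $\varphi$ is cuspidal, hence $\varphi=0$ by directness of the Langlands decomposition) is a correct, more explicit treatment than the paper, which leaves injectivity implicit; and your direct appeal to the ``if and only if'' in Theorem \ref{FJ-sphrical} for part (1) is a legitimate shortcut past the paper's route, which instead shows via Lemma \ref{Wh_hol_sl} that the $\SL_2$-component of $\varphi_{P_J}$ is a holomorphic automorphic form and then reads off the cuspidal support.

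The genuine gap is in surjectivity. By Theorem \ref{const_term_Eis_ser} the constant term is $E(g,s_0,f)_{P_J}=f+M_{w_1,s_0}f$, where $w_1$ is the nontrivial element of $W(L_J,L_J)$, and the theorem asserts an isomorphism induced by the constant term itself; so you must prove $E(g,s_0,f)_{P_J}=f$, i.e.\ $M_{w_1,s_0}f=0$. Your argument only observes that $M_{w_1,s_0}f$ carries the $A_J^\infty$-exponent $-s_0\neq s_0$, so that \emph{projecting} the constant term to the exponent $s_0$ recovers $f$. That is not enough: if $M_{w_1,s_0}f\neq0$, then $E(g,s_0,f)_{P_J}$ does not lie in the stated target space at all, and $f$ has not been shown to be in the image of the constant-term map. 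Exponent distinctness --- and the multiplicity-one property of Lemma \ref{emb_pj}, to which you attribute the conclusion --- is perfectly consistent with $M_{w_1,s_0}f\neq0$, so ``no spurious contributions survive'' is precisely the unproved point. The paper closes it by a representation-theoretic argument: at the archimedean place $M_{w_1,s_0}$ maps into $\Ind_{P_J(\bbR)}^{G(\bbR)}\left(\mu_\infty|\cdot|^{-s_0}\otimes\sigma_\infty\right)$, which contains the Langlands subrepresentation, so the tempered representation $\calD_\lambda$ cannot occur there as a subrepresentation; since $f$ generates $\calD_\lambda$, its image under this intertwining operator vanishes. Alternatively you could repair your proof with pieces you already have: verify that $E(\cdot,s_0,f)$ lies in $\calL_{\lambda,(M,\pi)}$ (it generates $\calD_\lambda$ by equivariance, irreducibility and nonvanishing, and its cuspidal support is $(L_J,\mu|\cdot|^{s_0}\boxtimes\sigma)$), then apply your own forward direction to $E$ itself: its constant term can only carry the exponent $s_0$, which forces $M_{w_1,s_0}f=0$. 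Either way, this vanishing must be proved, not side-stepped by projection.
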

	
	\begin{proof}
	Take an automorphic form $\varphi \in \calL_{\lambda,(M,\pi)}$.
	We examine the constant term $\varphi_P$ and its restriction to $L_J(\bbA)$.
	Set $\Phi=\varphi_P|_{L_J(\bbA)}$, an automorphic form on $L_J(\bbA)$.
	By Theorem \ref{FJ-sphrical}, the representation of $A_P^\infty=\bbR_+^\times$ generated by $\Phi$ under the left-inverse translation is semisimple with at most two dimensional.
	The representations are of the form $a \mapsto a^{-(\lambda_1+2)}$ or $a \mapsto a^{-(-\lambda_2+2)}$.
	Since the parameter $\lambda$ is not singular, the representations are non-equivalent.
	We denote by $\Phi = \Phi_1+\Phi_2$ the sum according to the decomposition, i.e., $\Phi_i$ satisfies
	\[
	\Phi_i(ag) = a^{|\lambda_i|+2}\Phi(g), \qquad a \in A_J^\infty, g \in L_J(\bbA).
	\]
	Next, we consider the left translation of $A_P(\bbA)$ on $\Phi_i$.
	Since $\bbQ^\times \bbR_+^\times\bs \bbA^\times$ is compact, $\Phi_i$ generates a finite-dimensional semisimple representation of it.
	Thus, $\Phi_i$ can be decomposed as
	\begin{align}\label{phi_i}
	\Phi_i(ag) = \sum_\mu |a|^{|\lambda_i|+2}\mu(a)\Phi_{i,\mu}(g), \qquad a \in A_P(\bbA), g \in \SL_2(\bbA),
	\end{align}
	where $\mu$ runs over all Hecke characters of $\bbQ^\times \bbR_+^\times \bs\bbA^\times$.
	Here, we use $L_J = A_P \times \SL_2$.
	By Theorem \ref{FJ-sphrical} and Lemma \ref{Wh_jacobi}, the Whittaker functions of $\Phi$ generate a holomorphic discrete series representation of weight $-\lambda_2+1$ if $i=1$ and of weight $\lambda_1+1$ if $i=2$.
	Such automorphic forms are holomorphic by Lemma \ref{Wh_hol_sl}.
	This completes the proof of (1).
	Thus, $\Phi_{i,\mu}|_{\SL_2(\bbA)}$ lies in the isotypic component $\bigoplus_\pi L^2_\cusp(\SL_2)_\pi$ where $\pi$ runs over all irreducible cuspidal automorphic representations such that the archimedean component $\pi_\infty$ is isomorphic to the holomorphic discrete series representation of the weight above.
	
	The constant term along $P_J$ induces the inclusion
	\begin{align}\label{P_J_emb_1}
	\calL_{\lambda,(M,\pi)} \xhookrightarrow{\quad} \Ind_{P_J(\bbA)}^{G(\bbA)}\left(\mu|\cdot|^{s_0}\boxtimes\sigma\right),
	\end{align}
	where
	\[
	s_0 = 
	\begin{cases}
	-\lambda_2 & \text{if $\sigma_\infty=\calD_{\lambda_1+1}$}\\
	\lambda_1 & \text{if $\sigma_\infty=\calD_{-\lambda_2+1}$}
	\end{cases}
	\]
	by (\ref{phi_i}).
	Consider the archimedean component of the right-hand side of (\ref{P_J_emb_1}).
	By Lemma \ref{emb_pj}, if $\sigma_\infty=\calD_{\lambda_1+1}^+$ and $\mu \neq \sgn^{\lambda_2}$, the archimedean component of the induced representation does not contain the large discrete series representation $\calD_\lambda$.
	Thus, if $\calL_{\lambda,(M,\pi)} \neq 0$, the character $\mu$ satisfies \[
	\mu_\infty=
	\begin{cases}
	\sgn^{\lambda_2} & \text{if $\sigma_\infty=\calD_{\lambda_1+1}$}\\
	\sgn^{\lambda_1} & \text{if $\sigma_\infty=\calD_{\lambda_2+1}$}.
	\end{cases}
	\]
	We thus obtain the constant term along $P_J$ that induces the inclusion
	\begin{align}\label{P_J_emb_2}
    \calL_{\lambda,(M,\pi)} \xhookrightarrow{\quad} \left(\bigotimes_{v<\infty}\Ind_{P_J(\bbQ_v)}^{G(\bbQ_v)}\left(\mu_v|\cdot|^{s_0}\boxtimes\sigma_v\right)\right) \otimes \calD_\lambda.
	\end{align}
	
	We next prove the second assertion.
	The proof of the last statement is similar.
	It suffices to show that the inclusion (\ref{P_J_emb_2}) is surjective if $-\lambda_2>2$.
	Take a function $f$ on the right-hand side of (\ref{P_J_emb_2}).
	Let $f_s$ be the standard section such that $f_{s_0} = f$.
	By $s_0>2$, the Eisenstein series $E(g,s,f)$ converges absolutely at $s=s_0$.
	We compute the constant term along $P_J$.
	By Theorem \ref{const_term_Eis_ser}, one has
	\[
	E(g,s,f)_{P_J} = f_s(g) + M_{w,s}f_s(g).
	\]
	Here,
	\[
	M_{w,s}f_s(g) = \int_{N_J(\bbQ) \bs N_{J}(\bbA)}f_s(w_1ng)\,dn, \quad w_1 = 
	\left(
	\begin{array}{cc|cc}
	0&0&-1&0\\
	0&1&0&0\\
	\hline
	1&0&0&0\\
	0&0&0&1
	\end{array}\right).
	\]
	Then, at $s=s_0$, the intertwining integral converges absolutely and defines the intertwining operator
	\[
	\Ind_{P_J(\bbR)}^{G(\bbR)}\left(\mu_\infty|\cdot|^{s_0}\otimes\pi_\infty\right)\longrightarrow \Ind_{P_J(\bbR)}^{G(\bbR)}\left(\mu_\infty|\cdot|^{-s_0}\otimes\pi_\infty\right).
	\]
	Note that $\lambda_2<0$.
	Since the right-hand side contains the Langlands subrepresentation, the large discrete series representation does not occur as the subrepresentation in it.
	Thus, the archimedean component of $M_{w,s}f_s$ is equal to zero at $s=s_0$.
	Since the global integral converges absolutely at $s=s_0$, we have $M_{w,s}f_s|_{s=s_0}=0$ and $E(g,s_0,f)_{P_J}=f$.
	This shows that the map (\ref{P_J_emb_2}) is surjective.
	This completes the proof.
	\end{proof}
	
	The proofs of the following Theorems \ref{III_main_thm_P_J}, \ref{II_main_thm_P_S}, and \ref{III_main_thm_P_S} are completely the same as the proof of Theorem \ref{II_main_thm_P_J}.
	
	\begin{thm}\label{III_main_thm_P_J}
	Let $P=P_J$ and $\lambda \in \Xi_{III}$.
	Take a cuspidal datum $(M,\pi)$ such that $M=L_J$.
	We denote by $\pi = \mu \boxtimes \sigma$ the outer tensor product decomposition associated with $L_J(\bbA) = \GL_1(\bbA) \times \SL_2(\bbA)$.
	
	\begin{enumerate}
	    \item If $\calL_{\lambda, (M,\pi)} \neq 0$, the archimedean component of $\sigma$ is $\calD_{\lambda_1+1}^-$ or $\calD_{-\lambda_2+1}^-$.
	    Suppose that $\sigma_\infty = \calD^-_{\lambda_1+1}$ (resp.~$\sigma_\infty=\calD_{-\lambda_2}^-$).
	    We then have $\mu_\infty=\sgn^{\lambda_2}$ (resp.~$\mu_\infty=\sgn^{\lambda_1}$).
	    \item If $(\mu_\infty, \sigma_\infty)=(\sgn^{\lambda_2}, \calD_{\lambda_1+1}^-)$ and $-\lambda_2>2$, the constant term along $P$ induces the isomorphism
	    \[
	    \calL_{\lambda,P} \isom \left(\bigotimes_{v<\infty}\Ind_{P_J(\bbQ_v)}^{G(\bbQ_v)} \mu_v|\cdot|^{-\lambda_2}\boxtimes\sigma_v\right) \otimes \calD_\lambda.
	    \]
	    \item If $(\mu_\infty, \sigma_\infty)=(\sgn^{\lambda_1}, \calD_{-\lambda_2+1}^-)$ and $\lambda_1>2$, the constant term along $P$ induces the isomorphism
	    \[
	    \calL_{\lambda,P} \isom \left(\bigotimes_{v<\infty}\Ind_{P_J(\bbQ_v)}^{G(\bbQ_v)} \mu_v|\cdot|^{\lambda_1}\boxtimes\sigma_v\right) \otimes \calD_\lambda.
	    \]
	\end{enumerate}
    \end{thm}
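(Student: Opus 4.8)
The plan is to carry over the proof of Theorem \ref{II_main_thm_P_J} essentially word for word, substituting each type II input by its type III counterpart. Lemma \ref{Whittaker-MVWinv} identifies the type III setting with the contragredient of the type II one, so no genuinely new phenomenon arises and the entire argument is structurally the same. First I would take $\varphi \in \calL_{\lambda,(M,\pi)}$ and study the constant term $\varphi_P$ restricted to the Levi, $\Phi := \varphi_P|_{L_J(\bbA)}$. Decomposing $\Phi$ under the left translation by $A_P^\infty = \bbR_+^\times$ yields, by part (2) of Theorem \ref{FJ-sphrical}, a semisimple module with at most two distinct weights governed by $\lambda_1+2$ and $-\lambda_2+2$; these are distinct because $\lambda$ is regular, giving $\Phi = \Phi_1 + \Phi_2$. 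Decomposing each $\Phi_i$ further under the compact group $\bbQ^\times\bbR_+^\times \bs \bbA^\times$ expresses it as a finite sum over Hecke characters $\mu$, exactly as in formula (\ref{phi_i}).

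Next I would pin down the archimedean behaviour. By part (2) of Theorem \ref{FJ-sphrical} together with Lemma \ref{Wh_jacobi}, the Whittaker functions of the $\SL_2(\bbA)$-component of $\Phi_i$ generate the \emph{anti}-holomorphic discrete series $\calD^-_{-\lambda_2+1}$ (for $i=1$) or $\calD^-_{\lambda_1+1}$ (for $i=2$), the minus superscript now replacing the plus of the type II case. Applying Lemma \ref{Wh_anti_hol_sl} in place of Lemma \ref{Wh_hol_sl} shows that these $\SL_2$-components are anti-holomorphic, which is precisely assertion (1). The constant term along $P_J$ then realizes $\calL_{\lambda,(M,\pi)}$ as a subspace of $\Ind_{P_J(\bbA)}^{G(\bbA)}(\mu|\cdot|^{s_0}\boxtimes\sigma)$ with $s_0 = -\lambda_2$ when $\sigma_\infty=\calD^-_{\lambda_1+1}$ and $s_0=\lambda_1$ when $\sigma_\infty=\calD^-_{-\lambda_2+1}$. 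Invoking part (2) of Lemma \ref{emb_pj} then forces $\mu_\infty=\sgn^{\lambda_2}$ (resp.~$\sgn^{\lambda_1}$), since otherwise the archimedean factor of the induced representation contains no copy of $\calD_\lambda$.

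For the surjectivity claims in parts (2) and (3), I would run the Eisenstein series argument of Theorem \ref{II_main_thm_P_J}. Given $f$ on the right-hand side, form the standard section $f_s$ with $f_{s_0}=f$; since $s_0>2$ (this is where the hypotheses $-\lambda_2>2$ and $\lambda_1>2$ enter), the Eisenstein series $E(g,s,f)$ converges absolutely at $s=s_0$ by Theorem \ref{Eis_ser_conv}. By Theorem \ref{const_term_Eis_ser} its constant term along $P_J$ is $f_s + M_{w,s}f_s$. At $s=s_0$ the intertwining operator maps into $\Ind_{P_J(\bbR)}^{G(\bbR)}(\mu_\infty|\cdot|^{-s_0}\otimes\sigma_\infty)$, in which $\calD_\lambda$ sits only as a Langlands quotient and not as a subrepresentation; since $M_w$ is a $(\frakg,K)$-map and $\calD_\lambda$ is irreducible, its archimedean component must vanish, and absolute convergence then forces $M_{w,s}f_s|_{s=s_0}=0$. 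Hence $E(g,s_0,f)_{P_J}=f$, and the inclusion is an isomorphism.

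The hard part here is not conceptual but bookkeeping: one must match the anti-holomorphic weights and the sign characters $\sgn^{\lambda_i}$ correctly under the passage $\Xi_{II}\rightsquigarrow\Xi_{III}$, and confirm that in the type III case the relevant large discrete series again occupies the \emph{sub}representation position of the source induced module and the \emph{quotient} position of the target. Since Lemma \ref{emb_pj}(2) and Theorem \ref{FJ-sphrical}(2) supply exactly these two facts (the former resting on Muić's socle results), the argument closes precisely as in the type II case, and the only vigilance required is over the parity of signs and the strictness of the inequalities $-\lambda_2>2$, $\lambda_1>2$.
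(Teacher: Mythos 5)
Your proposal is correct and is essentially the paper's own argument: the paper proves this theorem by declaring it identical to the proof of Theorem \ref{II_main_thm_P_J}, with Lemma \ref{Wh_jacobi} and Lemma \ref{Wh_anti_hol_sl} replacing their holomorphic counterparts, which is exactly the substitution you carry out (including the correct pairing of the exponents $a^{\lambda_1+2}$, $a^{-\lambda_2+2}$ with $\calD^-_{-\lambda_2+1}$, $\calD^-_{\lambda_1+1}$ from Theorem \ref{FJ-sphrical}(2) and the sign conditions from Lemma \ref{emb_pj}(2)). Your unpacked version is in fact more detailed than what the paper records.
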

    \begin{proof}
    The proof is the same as the proof of Theorem \ref{II_main_thm_P_J} by Lemma \ref{Wh_jacobi} and Lemma \ref{Wh_anti_hol_sl}.
    \end{proof}
    
	We next consider the case where $P=P_S$.
	\begin{thm}\label{II_main_thm_P_S}
	Let $P=P_S$ and $\lambda = (\lambda_1, \lambda_2) \in \Xi_{II}$.
	Take a cuspidal datum $(M,\pi)$ such that $L_S$.
	Suppose that $\pi$ is invariant under $A_{S}^{\infty}$.
	\begin{enumerate}
	    \item If $\calL_{\lambda, (M,\pi)} \neq 0$, the archimedean component of $\pi$ is $\calD_{\lambda_1+\lambda_2+1}$ or $\calD_{\lambda_1-\lambda_2+1}$.
	    \item If $\pi_\infty = \calD_{\lambda_1+\lambda_2+1}$ and $\lambda_1-\lambda_2>3$, the constant term along $P$ induces the isomorphism
	    \[
	    \calL_{\lambda,(M,\pi)} \isom \left(\bigotimes_{v<\infty}\Ind_{P_S(\bbQ_v)}^{G(\bbQ_v)}\left(\mu_v|\cdot|^{(\lambda_1-\lambda_2)/2}\otimes\pi_v\right)\right) \otimes \calD_\lambda.
	    \]
	    \item If $\pi_\infty = \calD_{\lambda_1-\lambda_2+1}$ and $\lambda_1+\lambda_2>3$, the constant term along $P$ induces the isomorphism
	    \[
	    \calL_{\lambda,(M,\pi)} \isom \left(\bigotimes_{v<\infty}\Ind_{P_S(\bbQ_v)}^{G(\bbQ_v)}\left(\mu_v|\cdot|^{(\lambda_1+\lambda_2)/2}\otimes\pi_v\right)\right) \otimes \calD_\lambda.
	    \]
	\end{enumerate}
	\end{thm}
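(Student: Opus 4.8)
The plan is to follow the proof of Theorem~\ref{II_main_thm_P_J} essentially verbatim, substituting the Siegel-parabolic archimedean input for the Jacobi one: Lemma~\ref{Wh_siegel} replaces Theorem~\ref{FJ-sphrical} and Lemma~\ref{Wh_jacobi}, and Lemma~\ref{emb_ps} replaces Lemma~\ref{emb_pj}. First I would fix $\varphi\in\calL_{\lambda,(M,\pi)}$ and set $\Phi:=\varphi_{P_S}|_{L_S(\bbA)}$, an automorphic form on $L_S(\bbA)=\GL_2(\bbA)$. Restricting to the archimedean place and applying Lemma~\ref{Wh_siegel}, the $(\mathfrak{sl}_2(\R),\mathrm{O}(2))$-module generated by the archimedean component of $\Phi$ embeds into $\calD_{\Lambda_1+\Lambda_2}\oplus\calD_{d_\Lambda}$. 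Since the Blattner parameter for $\lambda\in\Xi_{II}$ is $\Lambda=(\lambda_1+1,\lambda_2)$, this reads $\calD_{\lambda_1+\lambda_2+1}\oplus\calD_{\lambda_1-\lambda_2+1}$, which gives the two possibilities for $\pi_\infty$ in part~(1). The split torus $A_S^\infty$ acts through a weight separating these two summands, nonzero on each because $\lambda$ is regular, and its eigenvalue pins down the exponent of $|\cdot|=|\det|$ to be $(\lambda_1-\lambda_2)/2$ when $\pi_\infty=\calD_{\lambda_1+\lambda_2+1}$ and $(\lambda_1+\lambda_2)/2$ when $\pi_\infty=\calD_{\lambda_1-\lambda_2+1}$, consistently with Lemma~\ref{emb_ps}.

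Using that $\pi$ is cuspidal and $A_S^\infty$-invariant, the cuspidal part of $\Phi$ lies in the $\pi$-isotypic space, so the constant term along $P_S$ induces an inclusion
\[
\calL_{\lambda,(M,\pi)}\hookrightarrow \Ind_{P_S(\bbA)}^{G(\bbA)}\bigl(|\cdot|^{s_0}\otimes\pi\bigr),
\]
where $s_0=(\lambda_1-\lambda_2)/2$ or $(\lambda_1+\lambda_2)/2$ according to the two cases. By Lemma~\ref{emb_ps} the archimedean factor $\calD_\lambda$ embeds into the induced representation (the accompanying sign twist being invisible, as $\calD_k\otimes\sgn\isom\calD_k$), so the inclusion refines to
\[
\calL_{\lambda,(M,\pi)}\hookrightarrow \Bigl(\bigotimes_{v<\infty}\Ind_{P_S(\bbQ_v)}^{G(\bbQ_v)}\bigl(|\cdot|^{s_0}\otimes\pi_v\bigr)\Bigr)\otimes\calD_\lambda.
\]

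For surjectivity (parts~(2) and~(3)) I would, given $f$ on the right-hand side, form the standard section $f_s$ with $f_{s_0}=f$ and the Eisenstein series $E(g,s,f)$. Since $\rho_{P_S}$ corresponds to $|\det|^{3/2}$, the convergence region of Theorem~\ref{Eis_ser_conv} is exactly $s_0>3/2$, i.e.\ $\lambda_1-\lambda_2>3$ in case~(2) and $\lambda_1+\lambda_2>3$ in case~(3). Computing the constant term by Theorem~\ref{const_term_Eis_ser} gives $E(g,s,f)_{P_S}=f_s+M_{w,s}f_s$ for the nontrivial element $w$ of the relative Weyl group, and $M_{w,s}f_s$ lands in $\Ind_{P_S}^G\bigl(|\cdot|^{-s_0}\otimes\pi\bigr)$. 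At the archimedean place $\calD_\lambda$ does not embed as a subrepresentation of this $|\cdot|^{-s_0}$-side, where it occurs only inside the Langlands (non-tempered) constituent, cf.\ Lemma~\ref{emb_ps}; hence the $\calD_\lambda$-component of $M_{w,s}f_s$ vanishes at $s=s_0$. As the global intertwining integral converges there, $M_{w,s}f_s|_{s=s_0}=0$ and $E(g,s_0,f)_{P_S}=f$, which proves surjectivity.

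The step I expect to be the main obstacle is the bookkeeping in the first two paragraphs: confirming, through the $A_S^\infty$-weight and the normalization of $\delta_{P_S}$, that the $|\det|$-exponent is exactly $(\lambda_1\mp\lambda_2)/2$, so that the convergence threshold $s_0>3/2$ matches the stated bounds $\lambda_1\mp\lambda_2>3$, together with the clean identification of the image of the constant-term map with the finite-place induced representation tensored with $\calD_\lambda$.
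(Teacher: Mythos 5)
Your proposal is correct and is exactly the paper's approach: the paper's entire proof of this theorem reads ``The proof is the same as the proof of Theorem \ref{II_main_thm_P_J} by Lemma \ref{Wh_siegel} and Lemma \ref{Wh_hol_sl}," which is precisely the substitution scheme you carry out, and your bookkeeping ($\Lambda=(\lambda_1+1,\lambda_2)$, exponents $(\lambda_1\mp\lambda_2)/2$, convergence threshold $s_0>3/2$) checks out against Lemma \ref{emb_ps} and Theorem \ref{Eis_ser_conv}. The only citation you leave implicit is Lemma \ref{Wh_hol_sl}, which is needed to pass from ``the Whittaker coefficients of $\varphi_{P_S}|_{L_S(\bbA)}$ generate holomorphic discrete series" (what Lemma \ref{Wh_siegel} actually gives) to ``the form on the Levi itself is holomorphic," but this is covered by your ``verbatim" clause, since that step of the Jacobi-parabolic proof carries over unchanged.
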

	\begin{proof}
	The proof is the same as the proof of Theorem \ref{II_main_thm_P_J} by Lemma \ref{Wh_siegel} and Lemma \ref{Wh_hol_sl}.
    \end{proof}
	
	\begin{thm}\label{III_main_thm_P_S}
	Let $P=P_S$ and $\lambda = (\lambda_1, \lambda_2) \in \Xi_{III}$.
	Take a cuspidal datum $(M,\pi)$ such that $M=M_S$.
	Suppose that $\pi$ is invariant under $A_{S}^{\infty}$.
	\begin{enumerate}
	    \item If $\calL_{\lambda, (M,\pi)} \neq 0$, the archimedean component of $\pi$ is $\calD_{-\lambda_1-\lambda_2+1}$ or $\calD_{\lambda_1-\lambda_2+1}$.
	    \item If $\pi_\infty = \calD_{-\lambda_1-\lambda_2+1}$ and $\lambda_1-\lambda_2>3$, the constant term along $P$ induces the isomorphism
	    \[
	    \calL_{\lambda,(M,\pi)} \isom \left(\bigotimes_{v<\infty}\Ind_{P_S(\bbQ_v)}^{G(\bbQ_v)}\left(\mu_v|\cdot|^{(\lambda_1-\lambda_2)/2}\otimes\pi_v\right)\right) \otimes \calD_\lambda.
	    \]
        \item If $\pi_\infty = \calD_{\lambda_1-\lambda_2+1}$ and $-\lambda_1-\lambda_2>3$, the constant term along $P$ induces the isomorphism
	    \[
	    \calL_{\lambda,(M,\pi)} \isom \left(\bigotimes_{v<\infty}\Ind_{P_S(\bbQ_v)}^{G(\bbQ_v)}\left(\mu_v|\cdot|^{-(\lambda_1+\lambda_2)/2}\otimes\pi_v\right)\right) \otimes \calD_\lambda.
	    \]
	\end{enumerate}
	\end{thm}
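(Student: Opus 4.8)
The plan is to run the argument of Theorem~\ref{II_main_thm_P_J} essentially verbatim, making the same substitutions that turn it into the Siegel-parabolic statement of Theorem~\ref{II_main_thm_P_S} and into the type~III statement of Theorem~\ref{III_main_thm_P_J}. Thus the Fourier--Jacobi input (Theorem~\ref{FJ-sphrical}) is replaced by the Siegel-parabolic Whittaker input of Theorem~\ref{Deg-Whitt-Siegel}, packaged as Lemma~\ref{Wh_siegel}; the holomorphy lemma~\ref{Wh_hol_sl} is replaced by its anti-holomorphic counterpart~\ref{Wh_anti_hol_sl}; and the embedding input Lemma~\ref{emb_ps} is used in its type~III form (part~2). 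Concretely, I would start from $\varphi\in\calL_{\lambda,(M,\pi)}$, form the constant term $\varphi_{P_S}$, and set $\Phi:=\varphi_{P_S}|_{L_S(\bbA)}$, an automorphic form on the Levi $L_S(\bbA)\cong\GL_2(\bbA)$, whose $M_S$-part is $\SL_2^{\pm}$ and whose central direction is $A_S^\infty$.

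First I would analyse $\Phi$ at the archimedean place. By Lemma~\ref{Wh_siegel}, the $(\mathfrak{sl}_2(\bbR),\mathrm{O}(2))$-module generated by the coefficients of the degenerate Whittaker function of $\calD_\lambda$ lies in $\cD_{\Lambda_1+\Lambda_2}\oplus\cD_{d_\Lambda}$; for $\lambda\in\Xi_{III}$ one has $\Lambda=(\lambda_1,\lambda_2-1)$, so the two weights are $-\lambda_1-\lambda_2+1$ and $\lambda_1-\lambda_2+1$ (recall $\Lambda_1+\Lambda_2<0$ here). The two summands carry distinct $A_S^\infty$-exponents, so exactly as in Theorem~\ref{II_main_thm_P_J} the left translation by $A_S^\infty$ splits $\Phi$ into at most two pieces $\Phi=\Phi_1+\Phi_2$, and compactness of $\bbQ^\times\bbR_+^\times\bs\bbA^\times$ decomposes each $\Phi_i$ into Hecke-character components. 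Because $\lambda\in\Xi_{III}$ the archimedean Whittaker data are of anti-holomorphic type, so Lemma~\ref{Wh_anti_hol_sl} forces each component to be an anti-holomorphic automorphic form and pins $\pi_\infty$ to be $\calD_{-\lambda_1-\lambda_2+1}$ or $\calD_{\lambda_1-\lambda_2+1}$; this is assertion~(1). Reading off the $A_S^\infty$-exponent of the surviving piece then realises the constant term as an inclusion $\calL_{\lambda,(M,\pi)}\hookrightarrow\Ind_{P_S(\bbA)}^{G(\bbA)}(|\cdot|^{s_0}\otimes\pi)$ with $s_0=(\lambda_1-\lambda_2)/2$ in case~(2) and $s_0=-(\lambda_1+\lambda_2)/2$ in case~(3). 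Feeding this into Lemma~\ref{emb_ps}(2), and using $\cD_k\otimes\sgn\isom\cD_k$ to absorb the sign ambiguity, determines the archimedean inducing datum and yields the inclusion into $(\bigotimes_{v<\infty}\Ind_{P_S(\bbQ_v)}^{G(\bbQ_v)}(|\cdot|^{s_0}\otimes\pi_v))\otimes\calD_\lambda$.

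For surjectivity in parts~(2) and~(3) I would take $f$ in the finite-place induced space, extend it to the standard section $f_s$ with $f_{s_0}=f$, and form $E(g,s,f)$. The growth hypotheses $\lambda_1-\lambda_2>3$ and $-\lambda_1-\lambda_2>3$ are precisely $s_0>3/2$, i.e.\ $\mathrm{Re}\langle\lambda-\rho_{P_S},\alpha\rangle>0$ for the roots $\alpha$ of $N_S$, so Theorem~\ref{Eis_ser_conv} gives absolute convergence at $s=s_0$. By Theorem~\ref{const_term_Eis_ser} the constant term is $E(g,s,f)_{P_S}=f_s+M_{w,s}f_s$ for the unique nontrivial $w\in W(M_S,M_S)$, which reflects $s_0\mapsto-s_0$. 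Since $M_{w,s}$ is $G$-equivariant, it maps the $\calD_\lambda$-generating section $f_s$ either to $0$ or to a copy of $\calD_\lambda$ inside the reflected module $\Ind_{P_S(\bbR)}^{G(\bbR)}(|\cdot|^{-s_0}\otimes\pi_\infty)$; the latter contains $\calD_\lambda$ only as a Langlands subquotient, not as a subrepresentation, so $M_{w,s_0}f_s=0$. Hence $E(g,s_0,f)_{P_S}=f$, which inverts the constant-term map and finishes the proof.

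The step I expect to be the main obstacle is exactly this vanishing of $M_{w,s_0}f_s$: one must confirm that, for $\lambda\in\Xi_{III}$, the large discrete series $\calD_\lambda$ sits in the reflected generalized principal series $\Ind_{P_S(\bbR)}^{G(\bbR)}(|\cdot|^{-s_0}\otimes\pi_\infty)$ only inside the Langlands quotient, so that no nonzero $\calD_\lambda$ can survive in the image of the standard intertwining operator at $s_0$. This is a statement about the socle of the reflected induced module, to be read off from \cite{2009_Muic} in the type~III range, and one must check that the anti-holomorphic normalisation of the embedding in Lemma~\ref{emb_ps}(2) is compatible with the direction of the reflection $w$. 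Once this subrepresentation/quotient dichotomy is in place, every remaining ingredient is formally identical to the type~II Siegel case, so the argument transfers without change.
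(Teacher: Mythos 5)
Your proposal is correct and takes essentially the same approach as the paper: the paper's own proof of Theorem \ref{III_main_thm_P_S} is a one-line reference saying it is the same as the proof of Theorem \ref{II_main_thm_P_J}, invoking Lemma \ref{Wh_siegel} and Lemma \ref{Wh_anti_hol_sl}, which is exactly the substitution scheme you carry out. Your expanded details --- the weights $-\lambda_1-\lambda_2+1$ and $\lambda_1-\lambda_2+1$ coming from $\Lambda=(\lambda_1,\lambda_2-1)$, the exponents $s_0=(\lambda_1-\lambda_2)/2$ and $-(\lambda_1+\lambda_2)/2$, the convergence threshold matching the hypotheses, and the vanishing of the intertwining term because $\calD_\lambda$ occurs in the reflected induced module only as a subquotient and not as a subrepresentation --- all agree with what the paper does in the proof of Theorem \ref{II_main_thm_P_J}.
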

	\begin{proof}
	The proof is the same as the proof of Theorem \ref{II_main_thm_P_J} by Lemma \ref{Wh_siegel} and Lemma \ref{Wh_anti_hol_sl}.
	\end{proof}
	
	We finally consider the case where $P=P_0$.
	The proof of this case is similar to that of Theorem \ref{II_main_thm_P_J}, but we need some additional discussion.
	\begin{thm}\label{II_main_thm_P_0}
	Let $P=P_0$ and $(M,\pi)$ be a cuspidal data with $M=L_0$.
	Take $\lambda \in \Xi_{II}$.
	We define the Hecke characters $\mu_1$ and $\mu_2$ by
	\[
	\pi(\diag(a_1,a_2,a_1^{-1},a_2^{-1}))=\mu_1(a_1)\mu_2(a_2).
	\]
	\begin{enumerate}
	    \item If $\calL_{P,(M,\pi)} \neq 0$,  $\lambda_1>-\lambda_2+1$ and $\lambda_2 >1$, we have
	    \[
	    \mu_{1,\infty}=\sgn^{\lambda_1+1}, \mu_2=\sgn^{\lambda_2}.
	    \]
	    \item If $\mu_{1,\infty}=\sgn^{\lambda_1+1}, \mu_{2,\infty}=\sgn^{\lambda_2}$, $\lambda_1>-\lambda_2+1$ and $\lambda_2 >1$, the constant term along $P$ induces the isomorphism
	\[
	\calL_{\lambda,(M,\pi)} \isom \left(\bigotimes_{v<\infty}\Ind_{P_0(\bbQ_v)}^{G(\bbQ_v)}\left(\mu_{1,v}|\cdot|^{\lambda_1}\boxtimes\mu_{2,v}|\cdot|^{-\lambda_2}\right)\right)\otimes\calD_\lambda.
	\]
	\end{enumerate}
	\end{thm}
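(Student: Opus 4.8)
The plan is to follow the proof of Theorem~\ref{II_main_thm_P_J} one step further down the parabolic chain. I would take a nonzero $\varphi\in\calL_{\lambda,(L_0,\pi)}$; as its cuspidal support is the torus, $\varphi$ is non-cuspidal and $\varphi_{P_0}\neq0$. Fix the representative of the exponent of $\pi$ to be $(\lambda_1,-\lambda_2)$, as in part~(2). Transitivity gives $\varphi_{P_0}=(\varphi_{P_J})_{N_0\cap M_J}$, and the $A_J^\infty$-exponent coming from $\pi$ is $\lambda_1$; by Theorem~\ref{FJ-sphrical} the $\SL_2\subset M_J$-factor of $\varphi_{P_J}$ then generates $\calD_{-\lambda_2+1}^{+}$, which is holomorphic by Lemmas~\ref{Wh_jacobi} and~\ref{Wh_hol_sl} once $-\lambda_2>1$.

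The genuinely new point, flagged by the authors, is that the local Corollary~\ref{emb_p0_II}(3) constrains only the product $\mu_{1,\infty}\mu_{2,\infty}$, so the two signs must be separated individually. The sign of the $\GL_1=A_J$-character $\mu_1$ is fixed by Lemma~\ref{emb_pj}, exactly as for $P_J$. For $\mu_2$ I would exploit that, the support being $L_0$ and not $L_J$, the $\SL_2$-factor of $\varphi_{P_J}$ is not cuspidal but lies in the Eisenstein part of $\calA(\SL_2)_{-\lambda_2+1}^{+}$, whence Proposition~\ref{prop_sl_gl}(1) fixes the sign of the inducing character $\mu_2$. This separates $\mu_{1,\infty}$ and $\mu_{2,\infty}$ and proves part~(1). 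Intrinsically, the same is forced by the degenerate Whittaker function of Theorem~\ref{Deg-Whitt-Borel}: as $M_0\subset L_0\cap K$, matching the left $M_0$-character $\mu_{1,\infty}\boxtimes\mu_{2,\infty}$ against the right $\tl^{*}$-action on any nonvanishing coefficient $\phi_k$ yields $\mu_{1,\infty}=\sgn^{\,k-\Lambda_1}$ and $\mu_{2,\infty}=\sgn^{\,\Lambda_2+k}$, with the parity of $k$ pinned down by the $P_J$-analysis above.

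For part~(2) the constant term along $P_0$ gives, as in~(\ref{P_J_emb_2}), a map from $\calL_{\lambda,(L_0,\pi)}$ to $\bigl(\bigotimes_{v<\infty}\Ind_{P_0(\bbQ_v)}^{G(\bbQ_v)}(\mu_{1,v}|\cdot|^{\lambda_1}\boxtimes\mu_{2,v}|\cdot|^{-\lambda_2})\bigr)\otimes\calD_\lambda$, recording the component of exponent $(\lambda_1,-\lambda_2)$. For surjectivity I would, given a section $f$, form $E(g,s,f)$. The hypotheses on $\lambda$ are precisely $\langle\lambda-\rho_{P_0},\alpha\rangle>0$ for all positive restricted roots $\alpha\in\{2e_1,2e_2,e_1\pm e_2\}$, so by Theorem~\ref{Eis_ser_conv} the series converges at the point $s_0$ giving exponent $(\lambda_1,-\lambda_2)$. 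Since $\lambda$ is regular, $w=1$ is the only Weyl element fixing this exponent, so in the expansion $E(g,s_0,f)_{P_0}=\sum_{w\in W}M_{w,s_0}f_s$ of Theorem~\ref{const_term_Eis_ser} the $(\lambda_1,-\lambda_2)$-component is exactly $f$. As $\calD_\lambda$ is a subrepresentation of the corresponding archimedean principal series by Corollary~\ref{emb_p0_II}(3), the series $E(\,\cdot\,,s_0,f)$ generates $\calD_\lambda$ and lies in $\calL_{\lambda,(L_0,\pi)}$, mapping to $f$; hence the map is onto.

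The main obstacle is the separation of $\mu_{1,\infty}$ from $\mu_{2,\infty}$ in part~(1): unlike the $P_J$ and $P_S$ cases, where a single character suffices, here the product condition of Corollary~\ref{emb_p0_II}(3) is genuinely weaker than the two individual conditions, and closing this gap requires the new degenerate-Whittaker computation of Theorem~\ref{Deg-Whitt-Borel} together with the $M_0$-compatibility above. A secondary difficulty lies in the injectivity of the constant-term map: because $W(L_0,L_0)$ is the full Weyl group of order eight, $\varphi_{P_0}$ a priori carries up to eight Weyl-conjugate exponents, and one must verify that the single $(\lambda_1,-\lambda_2)$-component determines $\varphi$ — equivalently, that the global intertwining operators linking these components are non-degenerate at $s_0$ — so that no nonzero form has vanishing $(\lambda_1,-\lambda_2)$-component.
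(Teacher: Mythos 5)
Your overall skeleton (a constant-term inclusion pinned down by Lemma~\ref{emb_pj} and Proposition~\ref{prop_sl_gl}, then surjectivity via an absolutely convergent Eisenstein series) is the paper's, but two essential steps are missing, and they are precisely the steps that make the $P_0$ case harder than Theorem~\ref{II_main_thm_P_J}. First, nothing in your argument controls the components of $\varphi_{P_0}$ at Weyl-conjugate exponents other than $(\lambda_1,-\lambda_2)$; your analysis through $P_J$ only sees the component of $\varphi_{P_J}$ with $A_J^\infty$-exponent $\lambda_1$. The paper instead writes $\varphi_{P_0}|_{G(\bbR)}=f_0+f_1+f_2+f_3+f_4$ according to the five principal series allowed by Corollary~\ref{emb_p0_II} (equivalently, the five-dimensional solution space of Theorem~\ref{Deg-Whitt-Borel}), kills $f_2,f_3,f_4$ by Lemma~\ref{emb_pj}, and --- crucially --- kills $f_0$, the $(-\lambda_2,\lambda_1)$-exponent component, by a constant-term computation along $P_S$: since $\varphi\in\calA(G)_{\{P_0\}}$, the restriction $\varphi_{P_S}|_{L_S(\bbA)}$ is orthogonal to cusp forms, hence a sum of $\GL_2$-Eisenstein series, and Lemma~\ref{Wh_siegel} together with Proposition~\ref{prop_sl_gl}(2) shows their constant terms cannot produce the exponent $(-\lambda_2,\lambda_1)$. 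You never use $P_S$, so a nonzero $\varphi$ whose $P_0$-constant term is supported only at $(-\lambda_2,\lambda_1)$ is not excluded; such a $\varphi$ would both invalidate the character determination in part~(1) (its datum, written in the $(\lambda_1,-\lambda_2)$-representative, carries the Weyl-transported characters $\sgn^{\lambda_1+1},\sgn^{\lambda_2}$ of Corollary~\ref{emb_p0_II}(1), not the $\sgn^{\lambda_1},\sgn^{\lambda_2+1}$ your route produces --- the latter agreeing with the body of the paper's proof) and destroy injectivity in part~(2). Your proposed remedy for injectivity, non-degeneracy of the global intertwining operators at $s_0$, is neither proved nor the paper's mechanism: once one knows $\varphi_{P_0}=f_1$ identically, injectivity is immediate, because a form in $\calA(G)_{\{P_0\}}$ whose constant term along $P_0$ vanishes is zero.

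The same hole reappears on the Eisenstein side of surjectivity. Observing that $w=1$ is the only Weyl element fixing the regular exponent $(\lambda_1,-\lambda_2)$ only identifies the $(\lambda_1,-\lambda_2)$-component of $E(g,s_0,f)_{P_0}$ with $f$; the theorem as stated requires $E(g,s_0,f)_{P_0}=f$, i.e.\ $M_{w,s_0}f=0$ for every $w\neq 1$, so that the constant term itself (not merely a projection of it) lands in the single induced representation. The paper proves this vanishing in two steps: for $w$ with $w(s_1,s_2)\neq(s_1,-s_2)$ the target principal series does not contain $\calD_\lambda$ as a subrepresentation by Corollary~\ref{emb_p0_II}, forcing $M_{w}f=0$; for the remaining reflection, with $w(s_1,s_2)=(s_1,-s_2)$, it invokes Mui\'c's Lemma~7.2(ii) together with Lemma~\ref{emb_pj} to show that $f$ lies in the kernel of $M_{w,(\lambda_1,-\lambda_2)}$. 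This last, genuinely local, vanishing statement has no counterpart in your proposal, and without it neither the well-definedness of the constant-term map into $\bigl(\bigotimes_{v<\infty}\Ind_{P_0(\bbQ_v)}^{G(\bbQ_v)}(\mu_{1,v}|\cdot|^{\lambda_1}\boxtimes\mu_{2,v}|\cdot|^{-\lambda_2})\bigr)\otimes\calD_\lambda$ nor its surjectivity is established.
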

	\begin{proof}
	Let $\varphi \in \calL_{\lambda,\{P_0\}}$.
	We may assume that $\varphi$ generates the minimal $K$-type of $\calD_\lambda$.
	We first consider the constant term of $\varphi$ along $P_0$ and its restriction to $G(\bbR)$.
	By Corollary \ref{emb_p0_II}, the restriction $\varphi|_{G(\bbR)}$ lies in the direct sum
	\begin{align*}
	&\Ind_{P_0(\bbR)}^{G(\bbR)}\left(\sgn^{\lambda_2}|\cdot|^{-\lambda_2}\boxtimes\sgn^{\lambda_1+1}|\cdot|^{\lambda_1}\right)
	\\
	&\qquad\oplus
	\Ind_{P_0(\bbR)}^{G(\bbR)}
	\left(\sgn^{\lambda_1}|\cdot|^{\lambda_1}\boxtimes\sgn^{\lambda_2+1}|\cdot|^{-\lambda_2}\right)
	\oplus
	\Ind_{P_0(\bbR)}^{G(\bbR)}
	\left(\sgn^{\lambda_1+1}|\cdot|^{\lambda_1}\boxtimes\sgn^{\lambda_2}|\cdot|^{-\lambda_2}\right)\\
	&\qquad \qquad \oplus
	\Ind_{P_0(\bbR)}^{G(\bbR)}\left(\sgn^{\lambda_1}|\cdot|^{\lambda_1}\boxtimes\sgn^{\lambda_2+1} |\cdot|^{\lambda_2}\right)
	\oplus
	\Ind_{P_0(\bbR)}^{G(\bbR)}\left(\sgn^{\lambda_1+1}|\cdot|^{\lambda_1} \boxtimes \sgn^{\lambda_2} |\cdot|^{\lambda_2}\right).
	\end{align*}
	We write $\varphi_{P_0}|_{G(\bbR)} = f_0+f_1+f_2+f_3+f_4$ according to the decomposition.
	
	Next, we consider the constant term along $P_S$.
	By Lemma \ref{Wh_siegel}, the restriction of the constant term $\varphi_{P_S}|_{G(\bbR)}$ generates the discrete series representation of weight $\lambda_1+\lambda_2+1$ or $\lambda_1-\lambda_2+1$.
	We thus get
	\[
	\varphi_{P_S}
	\in
	\Ind_{P_S(\bbA)}^{G(\bbA)}
	\left(|\cdot|^{(\lambda_1-\lambda_2)/2}\calA(A_{\GL_2}^\infty \bs \GL_2)_{\lambda_1+\lambda_2+1}
	\oplus
	|\cdot|^{(\lambda_1+\lambda_2)/2}\calA(A_{\GL_2}^\infty \bs \GL_2)_{\lambda_1-\lambda_2+1}\right).
	\]
	By $\varphi \in L_{\lambda,\{P_0\}}$, the restriction $\varphi|_{L_S(\bbA)}$ is orthogonal to all the cusp forms of $L_S(\bbA)$.
	Hence, the restriction $\varphi_{P_S}|_{L_{S}(\bbA)}$ is a sum of Eisenstein series of $\GL_2(\bbA)$ that generate discrete series representations.
	The archimedean component of the constant terms of such Eisenstein series is a summation of the form
	\[
	\Ind_{B_\GL(\bbA)}^{\GL_2(\bbA)}\left(\sgn^{\lambda_1+1+\vep}|\cdot|^{\lambda_1}\boxtimes\sgn^{\lambda_2+\vep}|\cdot|^{\pm\lambda_2}\right)
	\]
	for some $\vep\in\{0,1\}$ by Proposition \ref{prop_sl_gl}.
	Hence, $f_0=0$.
	
	Let us consider the constant terms along $P_J$.
	Similarly, the $\SL_2$-part of the restriction $\varphi_{P_J}|_{L_J(\bbR)}$ generates a discrete series representation.
	By Lemma \ref{emb_pj} (cf.~Lemma \ref{emb_ps}), we obtain $f_2=f_3=f_4=0$.
	Hence, $\varphi_{P_0}=f_1$.
	We conclude that the constant term along $P_0$ induces an inclusion
	\begin{align}\label{incl_borel}
	\calL_{\lambda,\{P_0\}}
	\xhookrightarrow{\quad} \bigoplus_{\omega_1,\omega_2}\Ind_{P_0(\bbA)}^{G(\bbA)}\left(\omega_1|\cdot|^{\lambda_1} \boxtimes\omega_2 |\cdot|^{-\lambda_2}\right),
	\end{align}
	where $\omega_1$ and $\omega_2$ run over Hecke characters with $\omega_{1,\infty}=\sgn^{\lambda_1}$ and $\omega_{2,\infty}=\sgn^{\lambda_2+1}$.
	We thus have
	\[
	\calL_{\lambda,(M,\pi)} \xhookrightarrow{\quad} \Ind_{P_0(\bbA)}^{G(\bbA)}\left(\mu_1|\cdot|^{\lambda_1} \boxtimes\mu_2 |\cdot|^{-\lambda_2}\right).
	\]
	
	Take a function $f$ on the right-hand side of (\ref{incl_borel}) such that $f$ generates $\calD_\lambda$.
	Let $f_{(s_1,s_2)}$ be the standard section such that $f_{(\lambda_1,-\lambda_2)}=f$.
	Then, by the assumption in the statement, the Eisenstein series $E(g,s_1,s_2,f)$ converges absolutely at $(s_1,s_2)=(\lambda_1,-\lambda_2)$.
	To prove the theorem, it suffices to show $E(g,\lambda_1,-\lambda_2,f)_{P_0}=f$.
	By Theorem \ref{const_term_Eis_ser}, one has
	\[
	E(g,s_1,s_2,f)_{P_0}=\sum_{w\in W}M_{w,(s_1,s_2)}f_{(s_1,s_2)}.
	\]
	Note that the intertwining operators $M_{w,(s_1,s_2)}f_{(s_1,s_2)}$ converge absolutely at $(s_1,s_2)=(\lambda_1,-\lambda_2)$.
	Hence, the intertwining operator $M_{w,(s_1,s_2)}$ induces an intertwining map between principal series representations.
	We will show that $M_{w,(s_1,s_2)}f_{(s_1,s_2)}=0$ for $w\neq 1$ at $(s_1,s_2)=(\lambda_1,-\lambda_2)$.
	Suppose $w \neq 1$ and $w(s_1,s_2) \neq (s_1,-s_2)$.
	Then, the corresponding principal series representation does not contain $\calD_\lambda$ by Corollary \ref{emb_p0_II}.
	Thus, $M_{w,(s_1,s_2)}f_{(s_1,s_2)}=0$.
	For $w$ with $w(s_1,s_2) = (s_1,-s_2)$, by \cite[Lemma 7.2(ii)]{2009_Muic} and Lemma \ref{emb_pj}, the function $f$ lies in the kernel of $M_{w,(\lambda_1,-\lambda_2)}$.
	Hence, $M_{w,(\lambda_1,-\lambda_2)}f=0$.
	We thus have
	\[
	E(g,s_1,s_2,f)_{P_0}=f.
	\]
	This completes the proof.
	\end{proof}
	
	For $\lambda \in \Xi_{III}$, we obtain the following by the same proof:
	\begin{thm}\label{III_main_thm_P_0}
	Let $P=P_0$ and $(M,\pi)$ be a cuspidal data with $M=M_{P_0}$.
	Take $\lambda \in \Xi_{III}$.
	We define the Hecke characters $\mu_1$ and $\mu_2$ by
	\[
	\pi(\diag(a_1,a_2,a_1^{-1},a_2^{-1}))=\mu_1(a_1)\mu_2(a_2).
	\]
	\begin{enumerate}
	    \item If $\calL_{P,(M,\pi)} \neq 0$,  $-\lambda_2>\lambda_1+1$ and $\lambda_1 >1$, we have
	    \[
	    \mu_{1,\infty}=\sgn^{-\lambda_2+1}, \mu_2=\sgn^{\lambda_1}.
	    \]
	    \item If $\mu_{1,\infty}=\sgn^{-\lambda_2+1}, \mu_{2,\infty}=\sgn^{\lambda_1}$, $-\lambda_2>\lambda_1+1$ and $\lambda_1 >1$, the constant term along $P$ induces the isomorphism
	\[
	\calL_{\lambda,(M,\pi)} \isom \left(\bigotimes_{v <\infty}\Ind_{P_0(\bbQ_v)}^{G(\bbQ_v)}\left(\mu_{1,v}|\cdot|^{-\lambda_2}\otimes\mu_{2,v}|\cdot|^{\lambda_1}\right)\right)\otimes\calD_\lambda.
	\]
	\end{enumerate}
	\end{thm}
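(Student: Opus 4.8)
The plan is to run the argument of Theorem~\ref{II_main_thm_P_0} essentially verbatim, replacing each type~II input by its type~III counterpart; the one genuinely new feature is that the discrete series of $\GL_2$ and $\SL_2$ occurring in the constant terms are now anti-holomorphic, so Lemma~\ref{Wh_anti_hol_sl} takes over the role played by Lemma~\ref{Wh_hol_sl}, and the $\calD^-$-branches of Lemma~\ref{emb_pj} and Theorem~\ref{FJ-sphrical} are the relevant ones. First I would take $\varphi \in \calL_{\lambda,\{P_0\}}$ and, after reducing to the minimal $K$-type, invoke Corollary~\ref{emb_p0_III} to place the archimedean constant term $\varphi_{P_0}|_{G(\bbR)}$ inside a sum of (at most) five principal series representations, in accordance with the five-dimensional solution space of Theorem~\ref{Deg-Whitt-Borel}. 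Writing $\varphi_{P_0}|_{G(\bbR)} = f_0 + f_1 + f_2 + f_3 + f_4$ accordingly, the aim of the first half is to show that only the summand $f_1$ attached to $\Ind_{P_0}^G(\sgn^{-\lambda_2+1}|\cdot|^{-\lambda_2}\boxtimes\sgn^{\lambda_1}|\cdot|^{\lambda_1})$ survives.

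Next I would prune the five summands using the other parabolics. Passing to the constant term along the Siegel parabolic $P_S$ and applying Lemma~\ref{Wh_siegel} together with the anti-holomorphic form of Proposition~\ref{prop_sl_gl} (via Lemma~\ref{Wh_anti_hol_sl}) forces the $\GL_2$-part of $\varphi_{P_S}$ to be an Eisenstein datum whose exponents exclude the summand $f_0$, so $f_0 = 0$. Passing to the constant term along the Jacobi parabolic $P_J$ and applying Lemma~\ref{emb_pj}(2) then annihilates $f_2$, $f_3$, and $f_4$, leaving $\varphi_{P_0} = f_1$. Reading off the archimedean characters of the surviving induced representation yields $\mu_{1,\infty} = \sgn^{-\lambda_2+1}$ and $\mu_{2,\infty} = \sgn^{\lambda_1}$, which is assertion~(1), and at the same time produces the injection
\[
\calL_{\lambda,(M,\pi)} \hookrightarrow \Ind_{P_0(\bbA)}^{G(\bbA)}\left(\mu_1|\cdot|^{-\lambda_2}\boxtimes\mu_2|\cdot|^{\lambda_1}\right),
\]
which is the easy half of assertion~(2).

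For surjectivity I would take $f$ on the right-hand side generating $\calD_\lambda$, form the standard section $f_{(s_1,s_2)}$ with $f_{(-\lambda_2,\lambda_1)} = f$, and build the associated Eisenstein series. The numerical hypotheses $-\lambda_2 > \lambda_1 + 1$ and $\lambda_1 > 1$ are precisely the positivity conditions $\langle (-\lambda_2,\lambda_1) - \rho_{P_0}, \alpha\rangle > 0$ for the two simple roots $e_1 - e_2$ and $2e_2$; since every positive root is a nonnegative integral combination of these, Theorem~\ref{Eis_ser_conv} guarantees absolute convergence at $(s_1,s_2) = (-\lambda_2,\lambda_1)$, where the intertwining integrals of Theorem~\ref{const_term_Eis_ser} also converge. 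It then suffices to show $M_{w,(s_1,s_2)}f = 0$ at this point for every $w \neq 1$. For those $w$ not preserving the standard-section parameter up to a sign on one coordinate, the target principal series does not contain $\calD_\lambda$ by Corollary~\ref{emb_p0_III}, so the archimedean component, and hence the globally convergent integral, vanishes; for the remaining reflections one shows $f$ lies in $\ker M_w$ using the kernel computation of \cite[Lemma 7.2(ii)]{2009_Muic} together with Lemma~\ref{emb_pj}. This gives $E(g,-\lambda_2,\lambda_1,f)_{P_0} = f$ and the desired isomorphism.

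The main obstacle, exactly as in the type~II case, is this last step: the vanishing of the nontrivial intertwining operators on the precise subspace generating $\calD_\lambda$. The delicate point is the family of Weyl elements fixing the exponent up to a coordinate sign, where mere non-containment of $\calD_\lambda$ is unavailable and one must instead exhibit $f$ explicitly in $\ker M_w$ through the local kernel computation. Keeping the type~III bookkeeping consistent throughout—the anti-holomorphic $\SL_2$- and $\GL_2$-data, and the correct signature characters $\sgn^{-\lambda_2+1}$ and $\sgn^{\lambda_1}$—is where errors are most likely to arise.
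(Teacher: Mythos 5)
Your proposal is correct and is essentially the paper's own argument: the paper proves Theorem \ref{III_main_thm_P_0} simply by asserting it follows ``by the same proof'' as Theorem \ref{II_main_thm_P_0}, and your type III substitutions (Corollary \ref{emb_p0_III} for the five principal series, Lemma \ref{Wh_anti_hol_sl} together with the $\calD^-$-branches of Lemma \ref{emb_pj} and Theorem \ref{FJ-sphrical} for the pruning via $P_S$ and $P_J$, and the standard section at $(s_1,s_2)=(-\lambda_2,\lambda_1)$) are exactly the right ones. In particular, your identification of $-\lambda_2>\lambda_1+1$ and $\lambda_1>1$ with the positivity of $\langle(-\lambda_2,\lambda_1)-\rho_{P_0},\alpha\rangle$ on the two simple roots, and your handling of the Weyl elements fixing the exponent up to a sign on one coordinate via Mui\'c's kernel computation and Lemma \ref{emb_pj} (with non-containment from Corollary \ref{emb_p0_III} disposing of all other $w\neq 1$), reproduce the paper's type II proof verbatim.
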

	
	\begin{rem}
	The conditions of the theorems correspond to the absolute convergence of the Eisenstein series.
	Without these conditions, the structure of the spaces $\calL_\lambda$ may be complicated due to the analytic nature of Eisenstein series.
	In the case where $\SL_2$ and the weight is two, we can describe the spaces in terms of nearly holomorphic automorphic forms.
	After we introduce the notation of nearly holomorphy for large discrete series representations, the authors expect that the spaces $\calL_{\lambda,(M,\pi)}$ are explicitly determined when $\calD_\lambda$ is not sufficiently regular.
	\end{rem}
	\begin{rem}\label{diff_NH}
	To conclude the paper, we consider the difference between nearly holomorphic automorphic forms and automorphic forms that generate large discrete series representations.
	The case of nearly holomorphic automorphic forms can be found in \cite{Horinaga_2, Horinaga_3}.
	Let $\calN(G)$ be the space of nearly holomorphic automorphic forms, i.e., $\frakp^-$-finite automorphic forms.
	Put $\calN(G)_{\{P\}}=\calN(G)\cap\calA(G)_{\{P\}}$.
	
	For the case of the Siegel parabolic subgroup, we have
	\[
	\calN(G)_{\{P_S\}} = 0.
	\]
	However, $\calL_{\{P_S\}} \neq 0$.
	
	For the case of the Jacobi parabolic, the spaces $\calN(G)_{\{P_J\}}$ and $\calL_{\lambda, \{P_J\}}$ are non-zero and similar when $\lambda \in \Xi_{II}$.
	In fact, the cuspidal components contributing to the spaces are holomorphic automorphic forms on $\SL_2$.
	One difference is the number of candidates of the weights of the holomorphic automorphic form with non-zero contribution.
	This number is two for the large discrete series representations case and is one for the nearly holomorphic case. 
	When $\lambda \in \Xi_{III}$, the anti-holomorphic automorphic forms contribute to the space $\calL_\lambda$.
	
	For the minimal parabolic case, the situations are also similar.
	Assume $\lambda \in \Xi_{II}$.
	The case $\lambda \in \Xi_{III}$ is similar.
	The spaces can be embedded into only one principal series representation.
	One difference is exponents.
	For the large discrete series representations, the exponent is $(\lambda_1,-\lambda_2)$.
	This lies in the positive chamber.
	However, for the holomorphic case of weight $(\lambda_1,\lambda_2)$, the exponent is $(\lambda_2, \lambda_1)$.
	This does not lie in the positive chamber.
	\end{rem}

\bibliographystyle{alpha}
\bibliography{ref}

\end{document}